\theoremstyle{plain}
\newtheorem{lem}{Lemma}[section]
\newtheorem{thm}[lem]{Theorem}
\newtheorem*{thmnonumber}{Theorem}
\newtheorem{cor}[lem]{Corollary} 
\newtheorem{prop}[lem]{Proposition}
\theoremstyle{definition}
\newtheorem{defi}[lem]{Definition}  
\newtheorem{ex}[lem]{Example} 
\theoremstyle{remark}
\newtheorem{rem}[lem]{Remark}
\newcommand{\ZZ}{\ensuremath{\mathbb Z}}
\newcommand{\CC}{\ensuremath{\mathbb C}}
\newcommand{\RR}{\ensuremath{\mathbb R}}
\newcommand{\HH}{\ensuremath{\mathbb H}}
\newcommand{\cF}{\mathcal{F}}
\newcommand{\cS}{\mathcal{S}}
\newcommand{\g}{\mathfrak{g}}
\newcommand{\CX}{\mathfrak{X}}
\newcommand{\CF}{\mathcal{F}}
\newcommand{\CU}{\mathcal{U}}
\newcommand{\CV}{\mathcal{V}}
\newcommand{\CI}{C^\infty}
\newcommand{\CG}{\Gamma}
\newcommand{\fto}{\rightarrow}
\newcommand{\soutar}{\rightrightarrows}
\newcommand{\st}{\hspace{.05in}:\hspace{.05in}}
\newcommand{\y}{\hspace{.04in}\text{and}\hspace{.04in}}
\newcommand{\bb}{\mathbf{b}}
\newcommand{\bt}{\mathbf{t}}                  
\newcommand{\bs}{\mathbf{s}}                  
\newcommand{\vX}{\mathfrak{X}} 
\title{{Hausdorff} Morita Equivalence of singular foliations}
\author{Alfonso Garmendia\footnote{KU Leuven, Department of Mathematics, Celestijnenlaan 200B box 2400, BE-3001 Leuven, Belgium.
Email: 
\texttt{alfonso.garmendia@kuleuven.be}} 
\and 
Marco Zambon\footnote{
Email: 
\texttt{marco.zambon@kuleuven.be
}} 
 }
\begin{document}

\date{}

\maketitle
\begin{abstract}
 {  
We introduce a notion of equivalence for singular foliations - understood as suitable families of vector fields - that preserves their transverse geometry. Associated to every singular foliation there is a holonomy groupoid, by the work of Androulidakis-Skandalis. We show that our notion of equivalence is compatible with this assignment, and as a consequence we obtain several invariants. Further, we show that it unifies some of the notions of transverse equivalence for regular foliations that appeared in the 1980's.}
\end{abstract}

\tableofcontents

\section*{Introduction}
\addcontentsline{toc}{section}{Introduction}
    
 A \emph{regular foliation} is a smooth partition of a manifold into immersed submanifolds of the same dimension, called leaves. The space of leaves is a topological space, and even though it is  typically  not smooth it is possible to apply differential methods to study it. More precisely, 
canonically associated to every regular foliation there is a  smooth (i.e. Lie) groupoid, namely the holonomy groupoid. It can be viewed as a replacement for the space of leaves, since when the latter is {smooth}, the holonomy groupoid is Morita equivalent to the space of leaves (seen as a trivial Lie groupoid).

The transversal geometry of a regular foliation was addressed  in the 1980's and 1990's.
 Haefliger  stated that a property of a regular foliation is \emph{transversal} if it can be described in terms of the Morita equivalence class of its holonomy groupoid (see the first paragraph  of \cite[\S 1.5]{HaefligerHolonomieClassifiants}). 

Molino introduced various notions of \emph{transverse equivalence} of regular foliations compatible with a Riemannian structure (see \cite[\S 2.2 d)]{MolinoOrbit-LikeFol}, where a list of further references is given). His notion of transverse equivalence is given by the requirement that the pullbacks of the foliations to suitable spaces agree, and does not make any reference to the holonomy groupoid.

In this paper we do not restrict ourselves to regular foliations, but study the transverse geometry of  {singular foliations}. However our results have consequences for
regular foliations too: a by-product of this paper is that Haefliger's approach and Molino's approach  
are equivalent, at least for regular foliations whose holonomy groupoid is Hausdorff.\\

\textbf{Statement of results.} We use the term \emph{singular foliation} to refer to a 
suitable choice of submodule of vector fields, as done in \cite{AndrSk}, rather than to a mere smooth partition into immersed submanifolds.
In the same spirit as Molino, we give a geometric definition of   equivalence  of singular foliations. We call it \emph{Hausdorff Morita equivalence}, see Def. \ref{def:defMEfol}.
Examples of Hausdorff Morita equivalent singular foliations can be constructed quite easily,
using the fact that singular foliations can be pushed forward along suitable maps (see \S\ref{subsection:expush}).

We determine many invariants, which should be regarded as constituents of the ``transverse geometry'' of a singular foliation:
 \begin{thmnonumber}[Prop. \ref{prop:corrleaves} and  theorem. \ref{thm:furtherinv}]
If two singular foliations are  Hausdorff Morita equivalent, then:
 \begin{itemize}
\item[a)] the leaf spaces are homeomorphic,
\item[b)] the isotropy Lie groups (and isotropy Lie algebras) of corresponding leaves are isomorphic,
\item[c)] the representations of corresponding isotropy Lie groups on   
normal spaces to the leaves are isomorphic.
\end{itemize}
 \end{thmnonumber}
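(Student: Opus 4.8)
The plan is to reduce the whole statement to a comparison between a singular foliation and its pullback along a single surjective submersion with connected fibres, and then to string the two comparisons together. A Hausdorff Morita equivalence between $\cF_1$ on $M_1$ and $\cF_2$ on $M_2$ provides a manifold $P$ with surjective submersions $\pi_i\colon P\to M_i$ having connected fibres and coinciding pullback foliations $\pi_1^{-1}(\cF_1)=\pi_2^{-1}(\cF_2)=:\cF_P$ (Definition \ref{def:defMEfol}). Hence it suffices to prove a)--c) for the pair $(\cF,\pi^{-1}(\cF))$ attached to a single such $\pi\colon P\to M$, and then to apply the result to $\pi_1$ and $\pi_2$ and compose the resulting identifications; the point $p\in P$ serves as the book-keeping device pairing a leaf of $\cF_1$ with a leaf of $\cF_2$.

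For part a) I would first record the elementary but essential fact that, because the fibres of $\pi$ are connected, the leaves of $\pi^{-1}(\cF)$ are exactly the preimages $\pi^{-1}(L)$ of the leaves $L$ of $\cF$. This gives a bijection $L\mapsto \pi^{-1}(L)$ between leaf spaces, covered by $\pi$; since $\pi$ is an open surjection, the induced map on leaf spaces is a homeomorphism for the quotient topologies. Composing the two resulting homeomorphisms $M_1/\cF_1\cong P/\cF_P\cong M_2/\cF_2$ yields the desired homeomorphism and simultaneously fixes, for each $p$, the two corresponding leaves through $\pi_1(p)$ and $\pi_2(p)$.

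For part b) I would invoke the compatibility of the holonomy-groupoid assignment with pullback: the holonomy groupoid of $\pi^{-1}(\cF)$ is the pullback groupoid $\pi^{!!}H(\cF)$ of $H(\cF)$ along $\pi$. The isotropy group of a pullback groupoid at $p$ consists of arrows $(p,g,p)$ with $g$ an arrow of $H(\cF)$ from $\pi(p)$ to $\pi(p)$, so projection to the middle factor is an isomorphism onto the isotropy group $H(\cF)_{\pi(p)}$. Thus the isotropy groups of $\cF_P$ at $p$ and of $\cF$ at $\pi(p)$ are isomorphic (these are genuine Lie groups even though the ambient groupoid need not be smooth). Running this for $\pi_1$ and $\pi_2$ and composing gives the isomorphism of isotropy Lie groups of corresponding leaves; the isotropy Lie algebra statement follows by passing to Lie algebras.

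Finally, part c) is the delicate one. The isotropy group at a point acts linearly on the normal space $N_x=T_xM/T_xL$ to the leaf (the normal, or linear-holonomy, representation). Because $\ker d\pi_p\subseteq T_p(\pi^{-1}(L))$ and $\pi$ is a submersion, the differential descends to a linear isomorphism $\overline{d\pi}_p\colon N_p^{P}\xrightarrow{\ \cong\ } N_{\pi(p)}^{M}$ between the normal spaces of $\pi^{-1}(L)$ and of $L$. I expect the main work here to be checking that $\overline{d\pi}_p$ is equivariant with respect to the isotropy isomorphism of b); this amounts to verifying that the normal representation is natural under the groupoid isomorphism $H(\pi^{-1}(\cF))\cong \pi^{!!}H(\cF)$, which in turn requires tracing how a holonomy class acts on transversals through $\pi$. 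Granting this naturality, the normal representations of $\cF_1$ and $\cF_2$ pulled back to $p$ both agree with that of $\cF_P$ at $p$, and composing the equivariant isomorphisms proves c). The principal obstacle throughout is thus establishing this equivariance in c), together with the groupoid-level input $H(\pi^{-1}(\cF))\cong \pi^{!!}H(\cF)$ on which both b) and c) rest.
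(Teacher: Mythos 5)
Your proposal is correct and follows essentially the same route as the paper: reduction to a single surjective submersion with connected fibres, identification of the leaves of the pullback with preimages of leaves, the isomorphism $H(\pi^{-1}(\cF))\cong\pi^{-1}(H(\cF))$ to handle isotropy groups, and naturality of the (linearized) holonomy action on transversals for the normal representations. The two facts you flag as the principal obstacles are exactly the paper's theorem \ref{thm:pullbackgroid} and proposition \ref{thm:holtrafo} (combined with remark \ref{rem:linholtrafo}).
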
 
Notice that this is in full analogy with Morita equivalence of Lie groupoids, for which 
the space of orbits, the isotropy Lie groups and their normal representations are a complete set of invariants  \cite[theorem.  4.3.1]{MatiasME}.

Several geometric objects have singular foliations naturally associated to them. These
assignments and Morita equivalence   are compatible:
 \begin{thmnonumber}[Corollary \ref{cor:PoisME} and proposition \ref{prop: implications}]
If two source connected Hausdorff Lie groupoids,
two Lie algebroids \cite[\S 6.2]{GinzburgGrot}, or two Poisson manifolds \cite{xuME} are Morita equivalent, then their underlying singular foliations are  Hausdorff Morita equivalent. \end{thmnonumber}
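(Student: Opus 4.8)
The plan is to reduce the statement to a single mechanism: Morita equivalence of Lie groupoids (and its analogues for Lie algebroids and Poisson manifolds) produces a bibundle or a pair of submersions, and this geometric data is exactly what is needed to witness Hausdorff Morita equivalence of the associated singular foliations in the sense of Definition \ref{def:defMEfol}. So the three cases of the statement should be handled by a common underlying principle, with the Lie groupoid case being fundamental and the other two reducing to it.

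\medskip

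First I would treat the \emph{Lie groupoid} case, which I expect to be the engine driving the whole result. A Morita equivalence of two source connected Hausdorff Lie groupoids $\cG_1 \rightrightarrows M_1$ and $\cG_2 \rightrightarrows M_2$ is given by a biprincipal bibundle $P$, equipped with two surjective submersions $P \to M_1$ and $P \to M_2$ whose fibers carry free and proper actions. Each Lie groupoid $\cG_i$ has an underlying singular foliation $\cF_i$ on $M_i$, namely the module of vector fields spanned by the images of the anchor applied to (compactly supported) sections, equivalently the vector fields tangent to the orbits. The key observation is that the two submersions $P \to M_i$ should pull the foliations $\cF_1$ and $\cF_2$ back to one and the same foliation on $P$ — this is the content of what it means for the bibundle to preserve transverse geometry. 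Concretely, I would show that the pullback foliation of $\cF_i$ along $P \to M_i$ coincides with the foliation on $P$ generated by the $\cG_i$-action together with the fiber directions, and that because $P$ is \emph{bi}principal these two pullbacks agree. This realizes $P$ (or the relevant pieces of it) as the bi-submersion or the correspondence space required by the definition of Hausdorff Morita equivalence, with Hausdorffness of $P$ inherited from the Hausdorffness of $\cG_1, \cG_2$ and $M_i$.

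\medskip

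Second, for the \emph{Lie algebroid} case, I would note that a Lie algebroid $A \to M$ has an underlying singular foliation given by the image of its anchor map, $\cF_A = \rho(\Gamma_c(A))$; this is well known to be a singular foliation in the sense of \cite{AndrSk}. Morita equivalence of Lie algebroids in the sense of \cite[\S 6.2]{GinzburgGrot} is again given by a bimodule structure over a pair of suitable surjective submersions. Here I would argue that the anchor images are compatible with this bimodule data in exactly the same way as in the groupoid case: the pullback Lie algebroid along each submersion has anchor image equal to the pullback of $\cF_{A_i}$, and the bimodule condition forces these two pullback foliations to coincide. Thus the same submersion-pair furnishes the Hausdorff Morita equivalence, and the argument is essentially a linearized (infinitesimal) version of the groupoid computation, perhaps even deducible from it when the algebroids integrate. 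The \emph{Poisson} case then follows by specialization: a Poisson manifold $(N,\pi)$ carries the singular foliation by symplectic leaves, which is precisely $\cF_A$ for the cotangent Lie algebroid $A = T^*N$, and Xu's Morita equivalence of Poisson manifolds \cite{xuME} is a Morita equivalence of the associated cotangent Lie algebroids (realized by a dual pair of complete Poisson maps from a symplectic manifold). Hence the Poisson statement is an instance of the Lie algebroid statement once one identifies the symplectic foliation with the anchor image of $T^*N$.

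\medskip

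The main obstacle I anticipate is \emph{verifying that the two pullback foliations on the intermediate space genuinely agree as submodules of vector fields}, rather than merely having the same leaves. Because a singular foliation in this setting is the finer datum of a module of vector fields — not just a partition into leaves — it is not enough to check that the orbits match up set-theoretically; one must check module equality, and this is where freeness and properness of the actions (and the biprincipal condition) do real work. A related subtlety is that \emph{pullback} of a singular foliation along a submersion must itself be defined carefully (presumably this is the construction in \S\ref{subsection:expush}), and one must confirm that pulling back along $P \to M_i$ and then restricting to fibers behaves as expected with respect to the module generators. I would isolate this module-equality computation as a lemma — ideally phrased once at the level of the submersion-pair and then applied uniformly to all three cases — and expect the bulk of the technical effort to lie there, with the reduction of the Poisson and algebroid cases to the groupoid case (or to a common submersion-pair lemma) being comparatively formal.
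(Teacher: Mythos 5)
Your proposal is correct in outline and, for the Lie groupoid and Lie algebroid cases, follows essentially the same architecture as the paper: extract from the Morita equivalence a Hausdorff correspondence space $P$ with two surjective submersions (the bitorsor of corollary \ref{cor:SConnMEGrpd2}), and then prove the module-equality lemma you isolate at the end, which is exactly the paper's lemma \ref{lem:pullbackalgfol} ($\#(\Gamma_c(\pi^{-1}A))=\pi^{-1}(\#\Gamma_c(A))$), applied twice after identifying the Lie algebroid of the pullback groupoid with the pullback algebroid (lemma \ref{GtoA}(i)). Two points of divergence are worth recording. First, in the Poisson case the paper does \emph{not} reduce to the Lie algebroid case: it argues directly on the full dual pair $(U,\omega,\bs,\bt)$, using that $\bs$ Poisson implies Hamiltonian vector fields lift ($X_g\mapsto X_{\bs^*g}$) and that symplectic orthogonality of the fibres gives $\Gamma(\ker d\bs)=\langle X_{\bt^*g}\rangle$ and $\Gamma(\ker d\bt)=\langle X_{\bs^*g}\rangle$, whence $\bs^{-1}(\cF_{\Pi_M})=\Gamma_c(\ker d\bs)+\Gamma_c(\ker d\bt)=\bt^{-1}(\cF_{\Pi_N})$ in a few lines (example \ref{ex:fulldualpair}). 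Your reduction via cotangent algebroids is viable but imports Ginzburg's nontrivial theorem that Morita equivalent Poisson manifolds have Morita equivalent cotangent algebroids, so it is less self-contained; what it buys is uniformity across the three cases. Second, a detail you should make explicit: Definition \ref{def:defMEfol} requires the submersions $P\to M_i$ to have \emph{connected} fibres, and this is precisely where source connectedness of the groupoids enters — the fibres of the moment map $P\to M_1$ of the bitorsor are diffeomorphic to source fibres of $\cG_2$ and vice versa (corollary \ref{cor:SConnMEGrpd2}); without this verification the hypothesis ``source connected'' plays no role in your argument and the conclusion would not follow.
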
	

One feature of singular foliations is they always have a topological groupoid canonically associated to them, called \emph{holonomy groupoid} \cite{AndrSk}. The main result of this paper states that the notion of Hausdorff Morita equivalence is compatible with this assignment:  

 \begin{thmnonumber}[Thm. \ref{thm:MEfolgroids}]
If two singular foliations are Hausdorff Morita equivalent then their holonomy groupoids   are Morita equivalent (as open topological groupoids).
\end{thmnonumber}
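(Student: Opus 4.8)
The plan is to prove the statement by a \emph{span} argument: we interpose, between the two holonomy groupoids, the holonomy groupoid of the common pullback foliation living on the equivalence manifold. Recall that a Hausdorff Morita equivalence between $(M,\cF_M)$ and $(N,\cF_N)$ is witnessed by a manifold $P$ together with two surjective submersions with connected fibers $\pi_M\colon P\to M$ and $\pi_N\colon P\to N$ such that the pullback foliations coincide, $\pi_M^{-1}(\cF_M)=\pi_N^{-1}(\cF_N)=:\cF_P$. Since Morita equivalence of open topological groupoids is an equivalence relation, it suffices to show that for any surjective submersion with connected fibers $\pi\colon P\to M$ and any singular foliation $\cF$ on $M$, the holonomy groupoid $H(\pi^{-1}\cF)$ is Morita equivalent to $H(\cF)$; applying this to $\pi_M$ and to $\pi_N$ and composing then yields $H(\cF_M)\sim H(\cF_P)\sim H(\cF_N)$.

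The key step, which I expect to be the main obstacle, is to identify $H(\pi^{-1}\cF)$ explicitly as the \emph{pullback groupoid}
\[
\pi^{!!}H(\cF)\;=\;P\times_M H(\cF)\times_M P\;=\;\{(p,g,q): \pi(q)=\bs(g),\ \pi(p)=\bt(g)\},
\]
with source $(p,g,q)\mapsto q$ and target $(p,g,q)\mapsto p$. The difficulty is that $H(\cF)$ is not a Lie groupoid, so this identification cannot be read off from smooth structures; instead one must argue at the level of the Androulidakis--Skandalis construction. First I would verify that if $(U,\bt,\bs)$ is a bisubmersion for $\cF$ then its pullback $P\times_M U\times_M P$ is a bisubmersion for $\pi^{-1}\cF$, and that the pullbacks of a bisubmersion atlas for $\cF$ form an atlas for $\pi^{-1}\cF$. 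Then I would show that two points of such pullback bisubmersions are carried to the same arrow of $H(\pi^{-1}\cF)$ precisely when the underlying points of $U$ are equivalent in $H(\cF)$ and the respective $P$-components agree, so that the quotient defining $H(\pi^{-1}\cF)$ is exactly the quotient defining $\pi^{!!}H(\cF)$, as topological groupoids. The connectedness of the fibers of $\pi$ is what guarantees that the whole fiber becomes a single holonomy orbit, matching the full fiber products $P\times_M(-)\times_M P$ and preventing the holonomy from splitting into spurious components.

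Finally I would invoke the fact that the pullback groupoid along a surjective submersion is Morita equivalent to the original groupoid. The bibundle realizing this is $P\times_M H(\cF)=\{(p,g):\pi(p)=\bt(g)\}$, carrying the commuting left action of $\pi^{!!}H(\cF)$ by $(p',h,p)\cdot(p,g)=(p',hg)$ and the right action of $H(\cF)$ by $(p,g)\cdot g'=(p,gg')$, with momentum maps $(p,g)\mapsto p$ to $P$ and $(p,g)\mapsto \bs(g)$ to $M$. Since this construction is purely topological and algebraic, although it is classical for Lie groupoids I would check that each requirement in the definition of Morita equivalence of open topological groupoids (surjectivity and openness of the momentum maps, principality of the two actions) survives in the topological setting; here the openness of the source map of $H(\cF)$ and the submersion property of $\pi$ feed directly into openness of the momentum maps. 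Chaining the two Morita equivalences $H(\cF_M)\sim \pi_M^{!!}H(\cF_M)=H(\cF_P)=\pi_N^{!!}H(\cF_N)\sim H(\cF_N)$, and using transitivity, completes the argument.
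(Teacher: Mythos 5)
Your proposal is correct and follows essentially the same route as the paper: the heart of the argument is the identification $H(\pi^{-1}\CF)\cong\pi^{-1}(H(\CF))$ for a surjective submersion $\pi$ with connected fibres (theorem \ref{thm:pullbackgroid}, proved exactly as you outline via pullback bisubmersions, pullback atlases, matching of the equivalence relations, and the use of fibre-connectedness together with source-connectedness of $H(\CF)$ to rule out spurious components). The only difference is in the last step: since the paper's definition \ref{def:MEtopgroid} of Morita equivalence for open topological groupoids is phrased directly in terms of pullback groupoids, the paper simply applies the isomorphism twice over the single space $P$ and is done, whereas you pass through bibundles and transitivity --- an equivalent formulation by proposition \ref{prop:AllME} and remark \ref{rem:topopenME}, just slightly longer.
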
 
\noindent For regular foliations -- and more generally the projective ones -- the converse statement holds under a Hausdorffness assumption, see proposition \ref{prop:MEholtoMEfol}.
For arbitrary singular foliations {the situation is more involved:} the holonomy groupoid is a topological groupoid, but actually more structure is necessary (a diffeology) in order to recover a singular foliation from its holonomy groupoid. The theory of Morita equivalence for diffeological groupoids has not been developed yet.\\

From the above it is clear that there is a tight connection between the notions of equivalence for singular foliations and for the associated holonomy groupoids. We emphasise that our definition of Hausdorff Morita equivalence is expressed in terms of the singular foliation alone, without making any reference to the associated holonomy groupoid, and as such, it has the advantage of being easy to handle.

{Finally, we remark that} some of our results {relating} Lie groupoids and singular foliations are stated with the assumption that the Lie groupoids   are Hausdorff.
In \S \ref{sec:NHMESingfol} we propose a variation of the notion of {Hausdorff Morita equivalence}  
which should allow to improve these results, by removing the Hausdorff assumption on Lie groupoids. \\

\textbf{Further questions.}
We list a few natural questions  that we hope to address in the future.

\begin{itemize}
\item Morita equivalence is defined among others for  algebras, Lie groupoids, Poisson manifolds.  In all this cases, the category of representations is invariant under Morita equivalence. Does this hold also for singular foliations? It is not even clear what the correct notion of representation is. (A candidate is given by modules for singular foliations regarded as Lie-Rinehart algebras).

\item Do Morita equivalent singular foliations have Morita equivalent $C^*$-algebras?
For the special case of projective foliations (including regular foliations), the answer is positive: we show just after theorem 
\ref{thm:MEfolgroids} that the holonomy groupoids are Morita equivalent as Lie groupoids, and this implies \cite[theorem. 2.4]{Landsman2000} that the associated $C^*$-algebras are.
 
\end{itemize}

\textbf{Conventions and notations.}
{All manifolds are assumed to be Hausdorff, unless otherwise stated. 
However, as is customary, the space of arrows of a Lie groupoid is not assumed to be Hausdorff in general.} 
 
For pullbacks we adopt the following notation. Let $G\soutar M$ be a groupoid, $A\fto M$ a Lie algebroid and $\CF$ a singular foliation on $M$. Given  a smooth  map $\pi\colon P\fto M$, following the notation of \cite{AndrSk} we  denote their pullbacks to $P$ by $\pi^{-1}(G)$, $\pi^{-1}(A)$ and $\pi^{-1}(\CF)$ respectively. {However,} given a vector bundle $E\to M$, the pullback vector bundle is denoted by $\pi^*(E)$. 

{Finally, we use a subscript to denote compact supported objects: for instance, $\Gamma_c(E)$ denotes the compactly supported sections of a vector bundle $E$.}\\

\textbf{Acknowledgments:}   
\noindent  {We are grateful to Dorette Pronk for explanations about topological groupoids, to
George Skandalis for his constructive remarks on this work, and to Kirsten Wang for pointing out reference \cite{MolinoOrbit-LikeFol}. Further we thank Iakovos Androulidakis and Ori Yudilevich for useful discussions. The authors acknowledge partial support by Pesquisador Visitante Especial grant 88881.030367/2013-01 (CAPES/Brazil), and by IAP Dygest, 
the long term structural funding -- Methusalem grant of the Flemish Government,  
  the FWO under EOS project G0H4518N, the FWO research project G083118N (Belgium).
 }

\section{Background on singular foliations and pullbacks}\label{sec:ssfol}

{In this short section we recall  singular foliations, as well as properties of  the pullback construction for singular foliations, Lie groupoids and Lie algebroids.} 

\subsection{Singular foliations and their pullbacks}
We   review some notions from the work \cite{AndrSk} by Iakovos Androulidakis and Georges Skandalis.

\begin{defi}\label{def:singfol}
A \textbf{singular foliation} on a manifold $M$ is a $\CI(M)$-submodule $\CF$
{of the compactly supported vector fields}
$\CX_c(M)$, closed under the Lie bracket and locally finitely generated.
A \textbf{foliated manifold} is a manifold with a singular foliation.
\end{defi}

\begin{rem} For any open set $U\subset M$,  
consider the following modules:
\begin{align*}
\iota_U^{-1}\CF&:=\{ X|_U\st X\in \CF \y \text{supp}(X)\subset U\},\\
\widehat{\iota_U^{-1}\CF}&:=\{ X\in \CX(M) \st fX\in \iota_U^{-1}\CF \hspace{.1in} \text{ for all }f\in \CI_c(U)\}.
\end{align*}
We say that $\CF$ is locally finitely generated if for every point of $M$ there is an open neighborhood $U$ and finitely many $X_1,\dots,X_n\in \widehat{\iota_U^{-1}\CF}$ that generate $\iota_U^{-1}\CF$ as a $\CI_c(U)$ module, {in the sense that
any element of $\iota_U^{-1}\CF$ is a   $\CI_c(U)$ linear combination of the $X_i$'s.}
\end{rem}

\begin{rem}
{Let $(M,\CF)$ be a foliated manifold and $x\in M$. Then $F_x:=\{X(x) \st X\in \CF\}$  is a  vector subspace of $T_x M$. Further $\CF_x:= \CF/I_x\CF$ is a finite dimensional vector space (the {fibre} of $\CF$ at $x$), where $I_x$ denotes the ideal of smooth functions on $M$ vanishing at $x$. The evaluation at the point $x$ induces a short exact sequence of vector spaces
$$0\to \g_x^{\CF}\to \CF_x \to F_x\to 0.$$
One checks   that $\g_x^{\CF}$ is a Lie algebra, with Lie bracket induced by the one of vector fields. {It is called {\bf isotropy Lie algebra} at $x$}.}

\end{rem}

Any singular foliation gives rise to a singular distribution that satisfies the assumptions of the  Stefan-Sussmann theorem; therefore, it induces a partition of the manifold into immersed submanifolds called leaves.  
\begin{ex}
{i) Given an involutive regular distribution $D\subset TM$ (by the Frobenius theorem it corresponds to a regular foliation), we obtain a singular foliation $\CF:=\Gamma_c(D)$.}

{
ii) If $N$ is a closed submanifold of a manifold $M$, then $\{X\in \CX_c(M):X|_N\subset TN\}$ is a singular foliation.}
\end{ex}

\begin{defi}\label{def:pullback}
Let $(M,\CF)$ be a foliated manifold  and  $\pi:P\fto M$ a submersion. Consider the following maps of sections:
 \begin{align*}
d\pi\colon  \CX(P) &\fto \CG(P,\pi^*TM), Y\mapsto d\pi Y,\\
\pi^*\colon\CX(M)&\fto \CG(P,\pi^*TM), X\mapsto X\circ \pi.
\end{align*}
{Note that $d\pi^{-1}(\pi^*\CF)$ is the set of projectable vector fields in $P$ which project to elements of $\CF$.}
The {\bf pullback foliation} of $\CF$ under $\pi$ \cite[proposition 1.10]{AndrSk} is the singular foliation on $P$ given by
\[\pi^{-1}(\CF):=\CI_c(P) \cdot d\pi^{-1}(\pi^*\CF).\]
\end{defi}

\begin{rem}
Given an open set $U\subset M$, denoting the inclusion map $\iota_U\colon U\fto M$,   the pullback foliation $\iota_U^{-1}\CF$ is equal to $\{ X|_U\st X\in \CF \y \text{supp}(X)\subset U\}$.
\end{rem}

\begin{prop}
Let $(M,\CF)$ be a foliated manifold, $x\in M$ a point and $X_1,\dots,X_k\in \CF$ vector fields such that their classes in the fibre $\CF_x$ form a basis. There exists a neighbourhood $U\subset M$ of $x$ such that the pullback foliation $\iota^{-1}_U\CF$ is generated by $X_1,\dots,X_k$ as a $\CI_c(U)$ module.
\end{prop}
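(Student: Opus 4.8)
The plan is to reduce the statement to a Nakayama-type argument built on the local finite generation that is part of the definition of a singular foliation. First I would invoke local finite generation to pick a neighbourhood $V$ of $x$ and vector fields $Y_1,\dots,Y_n$, which after multiplication by a cutoff may be taken in $\CF$, whose restrictions generate $\iota_V^{-1}\CF$ as a $\CI_c(V)$-module. The first observation is that the classes $[Y_1],\dots,[Y_n]$ span the fibre $\CF_x$: indeed, any $W\in\CF$ supported near $x$ can be written locally as $W=\sum_j g_j Y_j$, whence $[W]=\sum_j g_j(x)[Y_j]$ in $\CF_x=\CF/I_x\CF$, since $(g_j-g_j(x))Y_j\in I_x\CF$. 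Because $[X_1],\dots,[X_k]$ is by hypothesis a basis of $\CF_x$, I can then write each $[Y_j]=\sum_i a_{ji}[X_i]$ for unique real constants $a_{ji}$.

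The heart of the argument is to upgrade these relations in $\CF_x$ to relations between actual vector fields near $x$. From $[Y_j]=\sum_i a_{ji}[X_i]$ we get $Y_j-\sum_i a_{ji}X_i\in I_x\CF$, so $Y_j-\sum_i a_{ji}X_i=\sum_\alpha f_\alpha W_\alpha$ with $f_\alpha(x)=0$ and $W_\alpha\in\CF$. Expanding each $W_\alpha=\sum_m g_{\alpha m}Y_m$ near $x$ and collecting terms, I obtain on a neighbourhood of $x$
\[ Y_j=\sum_i a_{ji}X_i+\sum_m h_{jm}Y_m,\qquad h_{jm}(x)=0. \]
In matrix notation, writing $\mathbf Y=(Y_1,\dots,Y_n)^{\!\top}$ and $\mathbf X=(X_1,\dots,X_k)^{\!\top}$, this reads $(I_n-H)\mathbf Y=A\mathbf X$, where $A=(a_{ji})$ is a constant $n\times k$ matrix and $H=(h_{jm})$ is an $n\times n$ matrix of smooth functions vanishing at $x$.

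Since $H(x)=0$, the matrix $I_n-H$ is invertible on some possibly smaller neighbourhood $U$ of $x$, with smooth inverse, and therefore $\mathbf Y=(I_n-H)^{-1}A\,\mathbf X$ on $U$; that is, each $Y_j=\sum_i B_{ji}X_i$ with $B_{ji}\in\CI(U)$. As the $Y_j$ generate $\iota_V^{-1}\CF$, this exhibits the $X_i$ as generators on $U$. The last point to check, and the one requiring some care, is the bookkeeping of supports: given $W\in\CF$ with $\mathrm{supp}(W)\subset U$, I would write $W|_V=\sum_j c_j Y_j$ with $c_j\in\CI_c(V)$, substitute the expressions for the $Y_j$, and then multiply by a cutoff $\chi\in\CI_c(U)$ equal to $1$ on $\mathrm{supp}(W)$, so that the resulting coefficients $\chi\sum_j c_j B_{ji}$ lie in $\CI_c(U)$ while $\chi W=W$. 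The reverse inclusion is immediate, since $f_iX_i\in\iota_U^{-1}\CF$ for every $f_i\in\CI_c(U)$. I expect the genuine obstacle to be the invertibility (Nakayama) step and the correct identification of $\CF_x$ with the span of the $[Y_j]$, rather than these support manipulations, which are routine.
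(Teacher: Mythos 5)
Your proof is correct: the paper itself states this proposition without proof (it is background recalled from Androulidakis--Skandalis \cite{AndrSk}), and your Nakayama-type argument --- spanning of $\CF_x$ by local generators, upgrading the fibrewise relations to $(I_n-H)\mathbf{Y}=A\mathbf{X}$ with $H(x)=0$, and inverting $I_n-H$ near $x$ --- is essentially the standard proof of that result. The points you elide (reducing an arbitrary $W\in\CF$ to one supported in $V$ via a cutoff equal to $1$ near $x$, both when identifying $\CF_x$ with the span of the $[Y_j]$ and when expanding the $W_\alpha$, and the final support bookkeeping) are indeed routine and handled exactly as you indicate.
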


\begin{rem}\label{rem:sheavesbij}
A singular foliation on a manifold $M$ can be equivalently regarded as an involutive, locally finitely generated subsheaf of $\CX_M$. Here $\CX_M$ denotes
the sheaf of $C^{\infty}$-modules on $M$ given by the smooth vector fields.
 This result can be found in \cite[\S 2]{RoyWang2017}, and we 
 {obtained it independently extending the results of \cite{AZ6}. }
 
 {
More precisely, the bijective correspondence goes as follows. 
Given a singular foliation $\cF$, one obtains a presheaf $S^{\CF}$ declaring that for all open sets $U$ of $M$,   $$\cS^\CF(U):=\widehat{\iota_U^{-1}\CF}.$$
This  presheaf satisfies the gluing axiom, so it is a subsheaf of $\CX_M$. Further $S^\CF(U)$ is involutive for all open sets $U$, and for every point $x\in M$ there exists a neighbourhood $U_x$ such that $S^\CF(U_x)$ is finitely generated. Conversely, given a 
subsheaf $\cS$ of $\CX_M$, one obtains a submodule $\cF:=(\cS(M))_c$ of $\CX_c(M)$, which is a singular foliation when $\cS$ is involutive and locally finitely generated.}
\end{rem}
 
\begin{defi}\label{def:bisub}
Given foliated manifolds {$(M,\CF_M)$ and $(N,\CF_N)$}, a {\bf bisubmersion} between them \cite{AndrSk} consists of manifold $V$ and two submersions 
$\bs\colon V\to M$ and $\bt\colon V\to N$ (not necessarily surjective) such that: \[\bs^{-1}(\CF_M)=\bt^{-1}(\CF_N)=\Gamma_c(ker(d\bs))+\Gamma_c(ker(d\bt)).\]
A {\bf global bisubmersion} for $(M,\CF)$ is a bisubmersion  such that the maps $\bs$ and $\bt$ are surjective.
\end{defi}

\begin{ex}\label{ex:bisub.grpd}
{Let $A$ be a Lie algebroid, with anchor $\#$. Then $\CF:= \#(\CG_c(A))$ is a singular foliation. If $A$ is integrable to a   Hausdorff Lie groupoid   $G\soutar M$  (i.e. $G$ is Hausdorff), then $(G,\bs,\bt)$ is a global bisubmersion for $\CF$.} 
\end{ex}

\subsection{Relation with pullbacks of Lie groupoids and Lie algebroids}

{In example \ref{ex:bisub.grpd} we recalled that Lie algebroids give rise to singular foliations. We now review  pullbacks for Lie algebroids and Lie groupoids (see for instance \cite{MackenzieGrdAld}), and relate them to the notion of pullback for singular foliations. Before we start, we state a lemma that will be used repeatedly, and which is an immediate consequence of \cite[proposition 7.1]{GinzburgGrot}:  }

 \begin{lem}\label{lem:subcon}
	Let $A$ and $B$ be manifolds, { $k\ge 0$}, and $f\colon A\fto B$ a surjective submersion with $k$-connected fibers. If $B$ is $k$-connected then $A$ is $k$-connected.
\end{lem}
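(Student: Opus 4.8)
The plan is to unwind $k$-connectedness into the vanishing of homotopy groups and to read this vanishing off the long exact sequence of homotopy groups attached to $f$. Recall that a space is $k$-connected when it is nonempty and $\pi_i$ vanishes for all $0\le i\le k$ (with $\pi_0=0$ meaning path-connectedness). First I would note that $A$ is nonempty: $B$ is $k$-connected, hence nonempty, and $f$ is surjective. It then remains to show $\pi_i(A)=0$ for $0\le i\le k$.

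The engine is the homotopy exact sequence. Fix a base point $a_0\in A$, put $b_0:=f(a_0)$ and let $F:=f^{-1}(b_0)$ be the fibre through $a_0$, which is $k$-connected by hypothesis. Treating $f$ as a fibration (see below), one has
\[
\cdots \fto \pi_i(F,a_0)\fto \pi_i(A,a_0)\fto \pi_i(B,b_0)\fto \pi_{i-1}(F,a_0)\fto\cdots .
\]
For each $i$ with $1\le i\le k$ both outer groups vanish---$\pi_i(F,a_0)=0$ since $F$ is $k$-connected, and $\pi_i(B,b_0)=0$ since $B$ is---so exactness squeezes $\pi_i(A,a_0)=0$. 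At the bottom end the sequence is one of pointed sets: $\pi_0(F)$ and $\pi_0(B)$ are singletons, and exactness of $\pi_0(F)\fto\pi_0(A)\fto\pi_0(B)$ forces $\pi_0(A)$ to be a singleton as well, i.e. $A$ is path-connected. Hence $\pi_i(A)=0$ for all $0\le i\le k$, as wanted.

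The delicate point---and the step I expect to be the real obstacle---is the one just glossed over: justifying the exact sequence when $f$ is merely a surjective submersion. Such an $f$ is in general neither proper nor a locally trivial fibre bundle (its fibres need not even be pairwise diffeomorphic), so Ehresmann's theorem is unavailable. What the exact sequence truly requires is only the weaker homotopy lifting property with respect to disks, i.e. that $f$ be a Serre fibration, and this is precisely the content of \cite[proposition 7.1]{GinzburgGrot}, which I would invoke here. The mechanism behind it is that a submersion admits a local section through every point, which produces lifts over small cells; the substance of the argument is then the global patching of these local lifts over an $(i+1)$-dimensional cell, which is delicate precisely because $f$ need not be locally trivial. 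Granting this lifting property, the homotopy exact sequence holds and the computation above completes the proof.
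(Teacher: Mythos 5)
Your overall route is the paper's own: the paper offers no argument beyond declaring the lemma an immediate consequence of \cite[proposition 7.1]{GinzburgGrot}, and your homotopy-group bookkeeping ($A$ nonempty by surjectivity of $f$ and nonemptiness of $B$; $\pi_i(A)$ squeezed between $\pi_i(F)=0$ and $\pi_i(B)=0$ for $1\le i\le k$; the exactness argument for $\pi_0$) is the standard way to extract the statement from a result of that type, and is correct as far as it goes.

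The genuine problem is the step you yourself flag as the crux. A surjective submersion is \emph{not} a Serre fibration in general, even when all of its fibers are $k$-connected, so the cited proposition cannot be asserting that. Concretely, take $A=\RR^3\setminus\{0\}$ and $f\colon A\to\RR$, $(x,y,z)\mapsto x$: this is a surjective submersion all of whose fibers are connected, yet the fiber over $0$ is homotopy equivalent to a circle while every other fiber is contractible; since a Serre fibration over a path-connected base has weakly homotopy equivalent fibers, $f$ is not one. (This is consistent with the lemma for $k=0$: $A$ is indeed connected.) The local sections of a submersion produce lifts over small cells, but the global patching you allude to genuinely fails in general --- the failure is exactly what allows the homotopy type of the fibers to jump. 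What a result like \cite[proposition 7.1]{GinzburgGrot} can and does deliver is the weaker statement that, for a surjective submersion with $k$-connected fibers, $f_*\colon\pi_j(A)\to\pi_j(B)$ is an isomorphism for $j\le k$ and surjective for $j=k+1$ --- i.e., precisely the fragment of the ``long exact sequence'' that your computation uses, and nothing more. Read in that form the citation makes the lemma immediate and renders your exact-sequence unwinding redundant; as written, however, your proof rests on a false intermediate claim, and the sentence asserting the Serre-fibration property should be replaced by the range-limited statement on homotopy groups.
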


\begin{defi}\label{def:pullbackgroid}
{Given a Lie groupoid $G\soutar M$ and a surjective submersion $\pi:P\fto M$, the manifold $$\pi^{-1} G:= P{}_\pi\!\times_{\bt} G {}_{\bs}\!\times_\pi P$$ is the space of arrows of a  Lie groupoid over $P$. (The   source and target maps are the first and third projections  {and the multiplication is induced by the multiplication in $G$}).} This Lie groupoid is called the {\bf pullback groupoid} of $G$ by $\pi$.
\end{defi}

\begin{defi}\label{def:pullbackLA}
Given a Lie algebroid $A$ over a manifold $M$ with anchor $\#:A\fto TM$, and a surjective submersion $\pi\colon P\fto M$, one checks that 
$$\pi^{-1} A:= \pi^*(A) {}_\# \!\times _{d\pi} TP$$ is the total space of a vector bundle over $P$. It has a  natural Lie algebroid structure, with anchor $\hat{\#}:=\text{pr}_2\colon \pi^{-1} A \fto TP$ being the second projection. {The Lie bracket is determined by its {restriction to} ``pullback sections'', which is given by the Lie brackets in $\CX(P)$ and $\CG(A)$}.  We call this Lie algebroid the {\bf pullback algebroid} of $A$ over $\pi$. 
\end{defi}

These two definitions are nicely related by the following lemma:

\begin{lem}\label{GtoA}   
Consider a surjective submersion $\pi:P\fto M$.
\begin{enumerate}
\item[(i)] Let $G$ be a Lie groupoid over $M$, denote by $A$ its Lie algebroid. The Lie algebroid of the Lie groupoid $\pi^{-1} G$ is $\pi^{-1} A$.
\item[(ii)] Let $A$ be an integrable Lie algebroid over $M$, denote by $G$ the source simply connected Lie groupoid integrating it. If the map $\pi$ has simply connected fibers, then the source simply connected Lie groupoid integrating $\pi^{-1} A$ is $\pi^{-1} G$.
\end{enumerate}
\end{lem}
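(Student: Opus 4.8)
The plan is to prove (i) by differentiating the groupoid structure and then to deduce (ii) from (i) together with a connectivity argument.

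For (i), recall that the Lie algebroid of a Lie groupoid is the restriction $\ker(d\bs)|_M$ of the source-kernel to the unit section, with anchor $d\bt$. I would first identify this bundle for $\pi^{-1}G$ by a direct computation: differentiating the structure maps along the unit section $p\mapsto(p,1_{\pi(p)},p)$ and imposing the linearised fibre-product constraints coming from Def.~\ref{def:pullbackgroid} identifies the fibre of the algebroid of $\pi^{-1}G$ over $p$ with $\{(a,w)\in A_{\pi(p)}\times T_pP:\#a=d\pi(w)\}$, and its anchor with the projection $(a,w)\mapsto w$. This is exactly the anchored vector bundle $\pi^{-1}A=\pi^*A\,{}_\#\times_{d\pi}TP$ of Def.~\ref{def:pullbackLA}, so the two agree as anchored bundles over $P$.

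It then remains to match the brackets. Since $\CG(\pi^{-1}A)$ is generated as a $\CI(P)$-module by its pullback sections $(\pi^*\alpha,X)$ -- with $\alpha\in\CG(A)$ and $X\in\CX(P)$ that is $\pi$-related to $\#\alpha$ -- and since both brackets obey the Leibniz rule, it suffices to check that they agree on pullback sections. Here I would use that the middle projection $\mathrm{pr}_G\colon\pi^{-1}G\fto G$, $(p,g,q)\mapsto g$, is a morphism of Lie groupoids covering $\pi$, whose differential is the Lie algebroid morphism $(a,w)\mapsto a$, while the anchor realises the morphism $(a,w)\mapsto w$ to $TP$. A pullback section $(\pi^*\alpha,X)$ is $\mathrm{pr}_G$-related to $\alpha$ and anchor-projects to $X$; as both maps preserve brackets, the Lie algebroid bracket of $\pi^{-1}G$ applied to two pullback sections projects to $[\alpha,\beta]_A$ and to $[X,Y]$, hence equals the pullback section $(\pi^*[\alpha,\beta]_A,[X,Y])$ prescribed in Def.~\ref{def:pullbackLA}. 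This bracket identification is the only step that is not purely formal, and is the main point of (i).

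For (ii), part (i) already shows that $\pi^{-1}G$ is \emph{a} Lie groupoid integrating $\pi^{-1}A$; by uniqueness of the source simply connected integration it therefore suffices to prove that $\pi^{-1}G$ is source simply connected. Fix $p\in P$. I would describe the source fibre $\bs^{-1}(p)=\{(p,g,q):\bt(g)=\pi(p),\ \pi(q)=\bs(g)\}$ via the map $(p,g,q)\mapsto g$ onto the target fibre $\bt^{-1}(\pi(p))$ of $G$. This map is a surjective submersion, being the pullback of $\pi$ along $\bs|_{\bt^{-1}(\pi(p))}$ (surjectivity using that $\pi$ is onto), and its fibre over $g$ is $\pi^{-1}(\bs(g))$, a fibre of $\pi$, hence simply connected by hypothesis. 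Moreover $\bt^{-1}(\pi(p))$ is diffeomorphic, via inversion in $G$, to the source fibre $\bs^{-1}(\pi(p))$, which is simply connected because $G$ is source simply connected. Applying Lemma~\ref{lem:subcon} with $k=1$ (simply connected fibres over a simply connected base) gives that $\bs^{-1}(p)$ is simply connected, as desired. The only thing to watch here is the verification of the hypotheses of Lemma~\ref{lem:subcon} -- namely the identification of the source fibre as a fibration with simply connected fibres over a simply connected base -- which is precisely where the assumption on the fibres of $\pi$ enters.
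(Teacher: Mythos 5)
Your proof is correct and, for part (ii), is essentially the paper's own argument: you exhibit the source fibre of $\pi^{-1}G$ as a fibration whose fibres are $\pi$-fibres (hence simply connected) over a simply connected source/target fibre of $G$, and conclude with Lemma~\ref{lem:subcon}. For part (i) the paper simply cites \cite[\S 4.3]{MackenzieGrdAld}, whereas you supply the standard direct verification (identification of the anchored bundle along the unit section, then bracket-matching on pullback sections via the projection morphism to $G$ and the anchor), which is sound.
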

\begin{proof}
The proof of part (i)   can be found in \cite[\S 4.3]{MackenzieGrdAld}, so we address only the proof of part (ii). The Lie groupoid $\pi^{-1} G$ integrates $\pi^{-1} A$ by part (i). Therefore we need to only show that $\pi^{-1} G$ is source simply connected. Take $p\in P$. Its source fiber is
\[\bs^{-1}(p)=\{(q,g,p) \st \pi(p)=\bs(g) \y \pi(q)=\bt (g)\}\simeq P {}_\pi \!\times_\bt \bs^{-1}(\pi(p)).\]
The canonical submersion $\bs^{-1}(p)\fto \bs^{-1}(\pi(p))$ has simply connected fibers, {since the $\pi$-fibers are simply connected.} Using that $\bs^{-1}(\pi(p))$ is simply connected { and lemma \ref{lem:subcon}} we conclude that $\bs^{-1}(p)$ is simply connected.
\end{proof}

The following lemma relates the two definitions above with the pullback of foliations:

\begin{lem}  \label{lem:pullbackalgfol}
Consider a surjective submersion $\pi:P\fto M$. Let $A$ be a Lie algebroid over $M$ with anchor $\#:A\fto TM$. Then the foliation $\CF_P:=\#(\CG_c(\pi^{-1} A))$   equals $\pi^{-1}(\CF_M)$, where $\CF_M:=\#\CG_c(A)$. 
\end{lem}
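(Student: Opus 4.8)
The plan is to unwind both sides into submodules of $\CX_c(P)$ and establish the two inclusions separately. By Definition \ref{def:pullbackLA} the anchor $\hat\#$ of $\pi^{-1}A$ is the second projection, so a section of $\pi^{-1}A$ is a pair $(\alpha,Y)$ with $\alpha\in\CG(\pi^*A)$, $Y\in\CX(P)$ and $\#\alpha=d\pi(Y)$, and $\hat\#(\alpha,Y)=Y$. Hence
$$\CF_P=\{Y\in\CX_c(P)\st d\pi(Y)=\#\alpha\ \text{for some}\ \alpha\in\CG_c(\pi^*A)\},$$
and $\CF_P$ is a $\CI_c(P)$-submodule since $\hat\#$ is a module map. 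On the other side $\pi^{-1}(\CF_M)=\CI_c(P)\cdot d\pi^{-1}(\pi^*\CF_M)$, and I would first record the elementary observation that it contains $\CG_c(\ker d\pi)$: a vertical field projects to $0\in\CF_M$, so it lies in $d\pi^{-1}(\pi^*\CF_M)$, and multiplying it by a compactly supported bump that equals $1$ on its support exhibits it as an element of $\pi^{-1}(\CF_M)$.

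For the inclusion $\pi^{-1}(\CF_M)\subseteq\CF_P$ it suffices, by the module property, to treat a generator $fY$ with $f\in\CI_c(P)$ and $Y\in d\pi^{-1}(\pi^*\CF_M)$, i.e. $d\pi(Y)=\pi^*X$ with $X=\#\beta$ for some $\beta\in\CG_c(A)$. Since the pullback anchor sends $\pi^*\beta$ to $\pi^*(\#\beta)=\pi^*X$, the pair $(f\,\pi^*\beta,\,fY)$ is a compactly supported section of $\pi^{-1}A$ whose anchor image is $fY$; thus $fY\in\CF_P$, and summing over generators yields the inclusion.

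The substance of the lemma is the reverse inclusion $\CF_P\subseteq\pi^{-1}(\CF_M)$, whose difficulty is that a section $\alpha$ of $\pi^*A$ is only \emph{locally} a combination of pullback sections, so one must localise and keep track of compact supports. I would argue as follows. Fix $(\alpha,Y)\in\CG_c(\pi^{-1}A)$, cover the compact set $\pi(\text{supp}\,Y)$ by finitely many opens $\{W_a\}$ trivialising $A$, with local frames $(e^a_i)$; choose $\chi_a\in\CI_c(W_a)$ with $\sum_a\chi_a\equiv 1$ near $\pi(\text{supp}\,Y)$, then $\psi_a\in\CI_c(W_a)$ with $\psi_a\equiv 1$ on $\text{supp}\,\chi_a$, and $\phi\in\CI_c(P)$ with $\phi\equiv 1$ on $\text{supp}\,Y$. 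Over $\pi^{-1}(W_a)$ write $\alpha=\sum_i g^a_i\,\pi^*e^a_i$, so $d\pi(Y)=\sum_i g^a_i\,\pi^*(\#e^a_i)$ there. Each $\psi_a\#e^a_i=\#(\psi_a e^a_i)$ lies in $\CF_M$, and after fixing an Ehresmann connection for $\pi$ its horizontal lift $W^a_i:=(\psi_a\#e^a_i)^H$ is a globally defined element of $d\pi^{-1}(\pi^*\CF_M)$ (horizontal lift is $\CI(M)$-linear via $\pi^*$, so this is $(\pi^*\psi_a)(\#e_i^a)^H$). Writing $Y=\sum_a(\pi^*\chi_a)Y$ and setting $Z_a:=\sum_i h^a_i\,W^a_i$ with $h^a_i:=\phi\,(\pi^*\chi_a)\,g^a_i\in\CI_c(P)$ (well defined and compactly supported because $\pi^*\chi_a$ is supported where $g^a_i$ is defined and $\phi$ cuts in the fibre direction), a direct computation using $\chi_a=\chi_a\psi_a$ and $\phi\equiv 1$ on $\text{supp}\,Y$ gives $d\pi(Z_a)=d\pi((\pi^*\chi_a)Y)$. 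Hence $(\pi^*\chi_a)Y-Z_a\in\CG_c(\ker d\pi)\subseteq\pi^{-1}(\CF_M)$, while $Z_a\in\pi^{-1}(\CF_M)$ by construction; summing over $a$ yields $Y\in\pi^{-1}(\CF_M)$.

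I expect the main obstacle to be precisely this bookkeeping in the reverse inclusion: reconciling the globally defined but non-projectable field $Y$ with the only-locally-available pullback-frame expansion of $\alpha$, while guaranteeing that every coefficient produced is compactly supported. The device of the three families of cutoffs -- a partition of unity $\chi_a$ on the base, the auxiliary $\psi_a$ that makes each lifted generator globally defined and land in $\CF_M$, and $\phi$ that secures compact support of the coefficients on $P$ -- together with the fact that $\pi^{-1}(\CF_M)$ absorbs compactly supported vertical fields, is what makes the argument close up.
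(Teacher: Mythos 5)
Your proof is correct, and its skeleton is the same as the paper's: both directions are handled by passing between sections of $\pi^{-1}A$ and $\CI_c(P)$-combinations of projectable lifts, and the hard inclusion closes in both cases by observing that the discrepancy is a compactly supported vertical field, which is absorbed into $\pi^{-1}(\CF_M)$. The one genuine difference is the device used to express the $\pi^*A$-component of a section of $\pi^{-1}A$ in terms of pullbacks of sections of $A$: the paper invokes the Serre--Swan theorem to obtain a \emph{global} finite generating set of $\Gamma(A)$, writes $\hat\beta=\sum_i f_i\,\pi^*\alpha_i$ at once, and lifts each $\#\alpha_i$ by an arbitrary $\pi$-lift; you instead work with local frames over a finite trivializing cover of the compact set $\pi(\mathrm{supp}\,Y)$, glue with partitions of unity, and lift via an Ehresmann connection. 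Your route is more elementary and self-contained (it avoids appealing to finite generation of $\Gamma(A)$ over a possibly non-compact base, a point the paper passes over quickly) at the cost of the three-layer cutoff bookkeeping; the paper's route is shorter but outsources the global finiteness to Serre--Swan. Both arguments are complete and yield the same conclusion.
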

\begin{proof}
For the inclusion ``$\supset$'' we argue as follows. For all $X \in\pi^{-1}(\cF_M)$ we have:
\begin{equation} \label{MEGrFl}
d\pi(X)=f_1 \pi^*(Y_1)+\dots f_n \pi^*(Y_n)
\end{equation}
for some $Y_1\dots Y_n\in \CF_M$ and $f_1,\dots,f_n\in \CI_c(P)$. There exists $\alpha_1,\dots,\alpha_n\in \Gamma_c(A)$ such that $\#(\alpha_i)=Y_i$. Denote $\hat{\beta}:=f_1 \pi^* \alpha_1+\dots+f_n\pi^* \alpha_n$, {a section of the pullback vector bundle $\pi^*A$}. Using eq.  \eqref{MEGrFl} we get that $(\hat{\beta},X)\in \CG_c(\pi^{-1} A)$ and moreover $\#(\hat{\beta})=X$, so $X\in \#(\CG_c(\pi^{-1} A))$.

For the other inclusion take $(\hat{\beta},X)\in \CG_c(\pi^{-1} A)$. {The module of sections of $A$ is finitely generated due the} Serre–-Swan theorem, hence $\hat{\beta}\in \CG_c(\pi^{*}(A))$ can be written as $\hat{\beta}= f_1 \pi^* \alpha_1+\dots+f_n\pi^* \alpha_n$ for some $f_1,\dots,f_n\in \CI_c(P)$ and $\alpha_1,\dots,\alpha_n\in \Gamma_c(A)$.
{Since $\pi$ is a submersion, each $\# \alpha_i\in \cF_M$ can be lifted via $\pi$ to a vector field $X_i$ on $P$. By construction  $\sum_i f_iX_i$ lies  in $\pi^{-1}(\CF_M)$. The conclusion follows since
the difference $X- \sum_i f_iX_i$ lies in $\Gamma_c(\ker \pi_*)$, and therefore in $\pi^{-1}(\CF_M)$. }
\end{proof}

\section{ {Hausdorff} Morita equivalence of singular foliations}\label{sec:ME} 

 {We introduce {Hausdorff} Morita equivalence for singular foliations (\S \ref{sec:ME1}), display some easy invariants (\S \ref{subsec:firstinv}), and present several classes of examples: elementary ones in \S \ref{subsec:elemex}, other ones obtained pushing forward foliations (typically to quotients by Lie group actions) in \S \ref{subsection:expush}, and more examples in relation to Morita equivalence of Poisson manifolds, Lie groupoids and Lie algebroids in \S \ref{subsec:com}.}

 \subsection{Definition of {Hausdorff} Morita equivalence}\label{sec:ME1}

The following definition is inspired by Ginzburg's definition of weak Morita equivalence of Lie algebroids  \cite[\S 6.2]{GinzburgGrot}, and is along the lines of the definition for   regular foliations given in  \cite[\S 9.2]{CFPois} .
It is a variation of the various notions of ``transverse equivalence'' of regular foliations that appeared in the work of Molino, see \cite[\S 2.2 d)]{MolinoOrbit-LikeFol}.

\begin{defi}\label{def:defMEfol}
Two singular foliations  $(M,\cF_M)$ and $(N,\cF_N)$
are {\bf Hausdorff
Morita equivalent} if there exists a manifold $P$ and two \emph{surjective submersions with connected fibers} $\pi_M:P\fto M$ and $\pi_N:P\fto N$  such that $\pi_M^{-1}\CF_M=\pi_N^{-1}\CF_N$. In this case we write $(M,\cF_M)\simeq_{ME} (N,\cF_N)$.
\[\begin{tikzcd}
  & P \arrow[dl,swap, "\pi_M"] \arrow[dr, "\pi_N"]   &\\
(M,\CF_M)& &(N,\CF_N)   
\end{tikzcd}\]
\end{defi}

{We will show later (see in particular Remark \ref{rem:easyME}) that it is quite easy  to construct   Hausdorff Morita equivalences.}
Later on, in \S \ref{sec:secondlook} and {\S \ref{sec:NHMESingfol}}, we will comment on why it is not desirable to weaken any of the requirements of Def. \ref{def:defMEfol} (except perhaps for $P$ being Hausdorff). 

\begin{lem}\label{lem:HausMEfolER}
Hausdorff Morita equivalence is an equivalence relation on foliated manifolds.
\end{lem}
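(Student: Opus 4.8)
The plan is to verify the three defining properties of an equivalence relation in turn; reflexivity and symmetry are immediate, and essentially all the work is in transitivity. For reflexivity, given $(M,\CF_M)$ I take $P=M$ with $\pi_M=\pi_N=\mathrm{id}_M$: the identity is a surjective submersion whose fibers are points, hence connected, and $\mathrm{id}^{-1}(\CF_M)=\CF_M$, so $(M,\CF_M)\simeq_{ME}(M,\CF_M)$. For symmetry, if $(M,\CF_M)\simeq_{ME}(N,\CF_N)$ is witnessed by $P$ with $\pi_M,\pi_N$, then the same $P$ with the roles of the two submersions exchanged witnesses $(N,\CF_N)\simeq_{ME}(M,\CF_M)$, since the condition $\pi_M^{-1}\CF_M=\pi_N^{-1}\CF_N$ is symmetric.

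For transitivity, suppose $(M,\CF_M)\simeq_{ME}(N,\CF_N)$ via $P$ with submersions $\pi_M\colon P\to M$ and $\pi_N\colon P\to N$, and $(N,\CF_N)\simeq_{ME}(Q,\CF_Q)$ via $P'$ with submersions $\rho_N\colon P'\to N$ and $\rho_Q\colon P'\to Q$. I would form the fiber product
$$R:=P{}_{\pi_N}\!\times_{\rho_N}P'=\{(p,p')\in P\times P'\st \pi_N(p)=\rho_N(p')\}.$$
Because $\pi_N$ and $\rho_N$ are submersions, $R$ is a Hausdorff manifold and the two projections $\mathrm{pr}_P\colon R\to P$ and $\mathrm{pr}_{P'}\colon R\to P'$ are surjective submersions. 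I then propose the composite legs $\Pi_M:=\pi_M\circ\mathrm{pr}_P\colon R\to M$ and $\Pi_Q:=\rho_Q\circ\mathrm{pr}_{P'}\colon R\to Q$ as the witnessing submersions for $(M,\CF_M)\simeq_{ME}(Q,\CF_Q)$.

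Two points must then be checked. First, $\Pi_M$ and $\Pi_Q$ are surjective submersions with connected fibers: surjectivity and the submersion property hold because each is a composite of surjective submersions. For connectedness of $\Pi_M^{-1}(m)$, I observe that $\mathrm{pr}_P$ restricts to a surjective submersion $\Pi_M^{-1}(m)\to\pi_M^{-1}(m)$ whose fibers are $\rho_N^{-1}(\pi_N(p))$, connected because $\rho_N$ has connected fibers; since the base $\pi_M^{-1}(m)$ is connected, Lemma \ref{lem:subcon} with $k=0$ yields that $\Pi_M^{-1}(m)$ is connected, and symmetrically for $\Pi_Q$. Second, the pullback foliations must coincide. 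Using functoriality of the pullback construction under composition of submersions, namely $(\pi\circ\phi)^{-1}(\CF)=\phi^{-1}(\pi^{-1}(\CF))$ from \cite{AndrSk}, together with the identity $\pi_N\circ\mathrm{pr}_P=\rho_N\circ\mathrm{pr}_{P'}=:\sigma$, I compute
$$\Pi_M^{-1}(\CF_M)=\mathrm{pr}_P^{-1}(\pi_M^{-1}\CF_M)=\mathrm{pr}_P^{-1}(\pi_N^{-1}\CF_N)=\sigma^{-1}(\CF_N)=\mathrm{pr}_{P'}^{-1}(\rho_N^{-1}\CF_N)=\mathrm{pr}_{P'}^{-1}(\rho_Q^{-1}\CF_Q)=\Pi_Q^{-1}(\CF_Q),$$
where the second and fifth equalities are precisely the two given Morita equivalences.

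The main obstacle is the connected-fiber requirement in transitivity: unlike the equality of pullback foliations, it does not follow formally and genuinely relies on Lemma \ref{lem:subcon}, applied to the restriction of $\mathrm{pr}_P$ over a single fiber of $\pi_M$ (and dually over a fiber of $\rho_Q$). The equality $\Pi_M^{-1}(\CF_M)=\Pi_Q^{-1}(\CF_Q)$ is, by contrast, a purely formal consequence of functoriality once one notes that the two composite maps $R\to N$ agree.
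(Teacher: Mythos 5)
Your proposal is correct and follows essentially the same route as the paper's proof: the same fibre product $P_1\times_N P_2$ with the composite legs, the same functoriality computation for the equality of pullback foliations, and the same application of Lemma \ref{lem:subcon} to the restriction of the projection over a single fibre to get connectedness. No gaps.
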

\begin{proof}
A foliated manifold $(M,\cF_M)$ is equivalent to itself, by means of $(M, Id_M, Id_M)$, therefore this relation is reflexive. It is  clearly symmetric. We now prove that it is transitive.

For the transitivity, let $(M,\CF_M)$, $(N,\CF_N)$ and $(S,\CF_S)$ be foliated manifold such that $M\simeq_{ME} N$ and $N\simeq_{ME} S$. There exists manifolds $P_1,P_2$  and surjective submersions with connected fibers $\pi_M, \pi_N^1, \pi_N^2, \pi_S$ as in this diagram, inducing the above {Hausdorff } Morita equivalences.

\[\begin{tikzcd}
 & & P_1\times_N P_2 \arrow[dl,swap, "\text{Pr}_1"] \arrow[dr, "\text{Pr}_2"] & &\\
& P_1 \arrow[dl, swap, "\pi_M"] \arrow[dr,"\pi_N^1"]& & P_2 \arrow[dl, swap, "\pi_N^2"] \arrow[dr,"\pi_S"] &\\
(M,\CF_M)& &(N,\CF_N) & & (S,\CF_S)
\end{tikzcd}\]
Take $P:= P_1\times_N P_2$, and denote the projections onto the factors by $\text{Pr}_1$  and $\text{Pr}_2$. 
{The commutativity of the diagram implies that} 
\[(\pi_M\circ \text{Pr}_1)^{-1}(\CF_M)=\text{Pr}_1^{-1}((\pi_N^1)^{-1}(\CF_N))={\text{Pr}_2^{-1}((\pi_N^2)^{-1}(\CF_N))}=(\pi_S\circ \text{Pr}_2)^{-1}(\CF_S).\]
{The maps $\pi_M\circ \text{Pr}_1:P\fto M$ and $\pi_S\circ \text{Pr}_2:P\fto S$ are clearly surjective submersions. We    prove below that they
 have connected fibres, allowing to conclude that  $M\simeq_{ME} S$ via $P=P_1\times_N P_2$ and thus finishing the proof.} 
 
{For any $m\in M$ we now show that $(\pi_M\circ \text{Pr}_1)^{-1}(m)$ is connected. Notice that the map}
\[\text{Pr}_1:(\pi_M\circ \text{Pr}_1)^{-1}(m)\fto \pi_M^{-1}(m),\]
is a surjective submersion with connected fibres, because  {the fibre over} $p_1\in \pi_M^{-1}(m)$ is \[\text{Pr}_1^{-1}(p_1)=\{(p_1,p_2)\st p_2\in (\pi_N^2)^{-1}(\pi_N^1(p_1))\}\cong (\pi_N^2)^{-1}(\pi_N^1(p_1)),\]
which is connected. Then using the connectedness of $\pi_M^{-1}(m)$ {and lemma \ref{lem:subcon}} we get that $(\pi_M\circ {Pr_1})^{-1}(m)$ is connected. The same argument shows that $\pi_S\circ \text{Pr}_2:P\fto S$ also  has connected fibres.
\end{proof}

\begin{rem}\label{lem:localsubm}
Assume that $\dim(M)\ge \dim(N)$. Then, for any $x\in M$, there is a neighbourhood  $W$ of $x$ and a submersion $\Psi\colon W\to N$, such that $$ {\iota_W^{-1}\cF_M}=\Psi^{-1}\cF_N,$$
where $\iota_W\colon W\hookrightarrow M$ is the inclusion.
  To see this, choose any local section $\bb \colon W\to P$ of $\pi_M$ that is transversal to the $\pi_N$ fibres, in the sense that $T(\bb(W))+\ker(\pi_N)_*=TP$ at points of $\bb(W)$. {Then $\Psi:=
  \pi_N\circ \bb$ has the desired property, by the functoriality of the pullback.}
\end{rem}

 \begin{rem}
{ In Definition \ref{def:defMEfol} we do not require the property that $\pi_M^{-1}\CF_M$ and $\pi_N^{-1}\CF_N$ equal $\Gamma_c(\ker(\pi_M)_*)+\Gamma_c(\ker(\pi_N)_*)$. (Including this property would deliver exactly the notion of {global bisubmersion} with connected fibers.)  } 
  The main reason for not including this property is that it is not needed to prove any of the features that we want Hausdorff Morita equivalence to have. 
  
  Another reason is that, given a  singular foliation $(M,\cF)$, there may not exist any global bisubmersion between $(M,\cF)$ and itself. Indeed, assume such a global bisubmersion $(U,\bt,\bs)$ exist.
For every $p\in P$ we have $\cF/I_{\bs(p)}\cF\cong \bs^{-1}\cF/I_p(\bs^{-1}\cF)$.
The dimension  of the latter is $\le 2 (dim(U)-dim(M))$, since the map
$$\Gamma_c(\ker\bs_*)/I_p \Gamma_c(\ker\bs_*)\oplus \Gamma_c(\ker\bt_*)/I_p \Gamma_c(\ker\bt_*)\to \bs^{-1}\cF/I_p(\bs^{-1}\cF),\;\; [X]+[Y]\mapsto [X+Y]$$
is surjective. Combining these two facts we see that, at every $x\in M$, the dimension of $\cF/{I_x}\cF$ is bounded above by $2 (dim(U)-dim(M))$. However there exist singular foliations (on non-compact manifolds) for which this dimension is unbounded. A  concrete example is displayed in \cite[lemma 1.3]{AZ1}.
 \end{rem}
 
\subsection{First invariants}\label{subsec:firstinv}

{Roughly speaking, two singular foliations are Hausdorff Morita equivalent if they have the same leaf space and the ``transverse geometry'' at corresponding leaves is the same. 
Here by ``transverse geometry'' we do not mean only the restriction of a singular foliation to a slice transversal to a leaf, but also the effect that holonomy (a global phenomenon) has on the slice. In this subsection we establish a few invariants. In \S  \ref{sec:hol}  
we will see that the global group-like objects associated to singular foliations (namely, their holonomy groupoids) are also Morita equivalent, giving rise to finer invariants.}

\begin{prop}\label{prop:corrleaves}
Let  $(M,\cF_M)$ and $(N,\cF_N)$ be  singular foliations which are Hausdorff Morita equivalent, by means of $(U,\pi_M,\pi_N)$.   Then 

\begin{enumerate}
\item[(i)] There is a homeomorphism between the {leaf space}
 of $(M,\cF_M)$ and the {leaf space} of $(N,\cF_N)$: it maps the leaf through $x\in M$   to {the leaf of $\cF_N$ containing}    $ {\pi_N({\pi_M}^{-1}(x))}$. 
{It preserves the codimension of leaves and the property of being an embedded leaf.}

\item[(ii)] 
{Let $x\in M$ and $y\in N$ be a points lying in corresponding leaves.}
Choose slices $S_x$ at $x$ and $S_y$ at $y$. Then the foliated manifolds $(S_x,{\iota_{S_x}^{-1}\cF_M})$ and $(S_y,{\iota_{S_y}^{-1}\cF_N})$
are diffeomorphic.
\item[(iii)] Let $x\in M$ and $y\in N$ be points lying in corresponding leaves. Then the isotropy Lie algebras $\g^{\cF_M}_x$ and $\g^{\cF_N}_y$ are isomorphic.
\end{enumerate}
\end{prop}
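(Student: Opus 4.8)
The plan is to transport all the data through the common total space $U$, using the fact that a surjective submersion with connected fibres induces a bijection between leaves upstairs and downstairs. The preliminary observation I would record is: if $\pi\colon P\fto M$ is a surjective submersion with connected fibres and $L$ is a leaf of $\cF$, then $\pi^{-1}(L)$ is a single leaf of $\pi^{-1}(\cF)$. Indeed, since $\pi^{-1}(\cF)$ contains all vertical fields and projects onto $\cF$, its tangent distribution at $p$ is $d\pi_p^{-1}(F_{\pi(p)})$, so $\pi^{-1}(L)$ is an integral submanifold of the correct dimension; it is connected because $\pi|_{\pi^{-1}(L)}\colon \pi^{-1}(L)\fto L$ is an open surjection with connected fibres onto the connected space $L$ (one may also invoke Lemma \ref{lem:subcon}). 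By maximality it is a leaf, so $L\mapsto \pi^{-1}(L)$ is a bijection between the leaves of $\cF$ and those of $\pi^{-1}(\cF)$. For part (i) I would apply this to both $\pi_M$ and $\pi_N$; writing $\cF_U:=\pi_M^{-1}\cF_M=\pi_N^{-1}\cF_N$, the two bijections compose to the asserted bijection of leaf spaces. To upgrade it to a homeomorphism, note that $\pi_M$ is continuous, open, surjective, carries $\cF_U$-saturated sets to $\cF_M$-saturated sets, and has saturated preimages of saturated sets; hence it descends to a homeomorphism of leaf spaces, and likewise for $\pi_N$. Codimension is preserved since $\dim\pi^{-1}(L)=\dim L+(\dim P-\dim M)$, so $\pi^{-1}(L)$ and $L$ have equal codimension; applying this to both legs gives the claim. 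Embeddedness is preserved because, for a submersion, $\pi^{-1}(L)$ is embedded exactly when $L$ is (one direction is the preimage of an embedded submanifold under a submersion; the other uses that the open map $\pi$ sends the saturated embedded set $\pi^{-1}(L)$ onto $L$).

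For part (ii) I would pick $u\in\pi_M^{-1}(x)$, so that $\pi_N(u)$ lies in the leaf of $\cF_N$ corresponding to the leaf of $x$, and choose a slice $S_u$ at $u$ for $\cF_U$, i.e. a submanifold through $u$ with $T_uS_u$ complementary to the leaf direction $F_u^{\cF_U}=d\pi_M^{-1}(F_x)$. Since $\ker d\pi_M|_u\subset F_u^{\cF_U}$, the map $d\pi_M$ is injective on $T_uS_u$ and sends it onto a complement of $F_x$ in $T_xM$; after shrinking, $\pi_M|_{S_u}$ is thus a diffeomorphism onto a slice $S_x':=\pi_M(S_u)$ at $x$. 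By functoriality of the pullback applied to $S_u\hookrightarrow U\xrightarrow{\pi_M} M$ one gets $\iota_{S_u}^{-1}\cF_U=(\pi_M|_{S_u})^{-1}\iota_{S_x'}^{-1}\cF_M$, so $\pi_M|_{S_u}$ is an isomorphism of foliated manifolds. The same argument with $\pi_N$ gives an isomorphism onto a slice at $\pi_N(u)$. Finally, using that the induced foliation on a slice is independent, up to foliated diffeomorphism, of the chosen slice and of the base point within a leaf, I would chain these isomorphisms to conclude $(S_x,\iota_{S_x}^{-1}\cF_M)\cong(S_y,\iota_{S_y}^{-1}\cF_N)$.

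For part (iii) I would invoke the local splitting theorem for singular foliations: near $x$ one has $\cF_M\cong \iota_{S_x}^{-1}\cF_M\times \cF_0$, with $\cF_0$ the transitive foliation in the leaf directions, whose isotropy Lie algebra vanishes; hence $\g_x^{\cF_M}\cong \g_x^{\iota_{S_x}^{-1}\cF_M}$, and similarly over $N$. Since the isotropy Lie algebra is invariant under isomorphisms of foliated manifolds and along a leaf, part (ii) immediately yields $\g_x^{\cF_M}\cong\g_y^{\cF_N}$. The hard part will be part (ii): the two subtle points are (a) showing that the image of an upstairs slice really is a slice downstairs whose induced foliation corresponds, which rests on functoriality of the pullback for the non-submersive composition $S_u\hookrightarrow U\fto M$, and (b) the independence of the transverse foliation from the choice of slice and of base point within a leaf. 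Once these are in place, parts (i) and (iii) are essentially bookkeeping, the only extra input for (iii) being the invariance of the isotropy Lie algebra under restriction to a slice, itself a consequence of the local splitting.
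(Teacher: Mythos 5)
Your proposal is correct and follows essentially the same route as the paper: preimages of leaves are leaves (giving the homeomorphism of leaf spaces via the open maps $\pi_M,\pi_N$), an upstairs slice maps diffeomorphically onto a downstairs slice with corresponding induced foliations by functoriality of the pullback, and (iii) reduces to (ii) via the identification of the isotropy Lie algebra with that of the transverse foliation (which the paper cites from \cite[Rem. 2.6]{AZ1} rather than re-deriving from the local splitting). Your explicit attention to the independence of the transverse foliation from the choice of slice and base point is a point the paper leaves implicit, but it does not change the argument.
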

\begin{proof}
(i) {For every leaf $L_M$ on $M$, the preimage $\pi_M^{-1}(L_M)$ is a leaf of 
$\pi_M^{-1}\CF_M=\pi_N^{-1}\CF_N$. Hence it equals $\pi_N^{-1}(L_N)$ for a unique leaf $L_N$ on $N$, which has the same codimension as $L_M$.} {Since $\pi_M$ and $\pi_N$ are continuous open maps, this assignment is a homeomorphism}. {If $L_M$ is an embedded leaf, then a chart on $M$ adapted to $L_M$ induces a  chart on $U$ adapted to $\pi_M^{-1}(L_M)$, and vice versa.} 

(ii) By Definition \ref{def:defMEfol}, it suffices to work with the submersion $\pi_M\colon U\to M$. Take $u\in U$ and let $S_u$ be a transversal for $\pi_M^{-1}\CF_M$ at $u$. Then $S_x:=\pi_M(S_u)$ is a transversal for $\CF_M$ at $x:=\pi_M(u)$. Counting dimensions, {and shrinking $S_u$ if necessary,} we see that ${\pi_M|_{S_u}}\colon S_u\fto S_x$ is a diffeomorphism. The commutativity of the diagram  

\[\begin{tikzcd}
S_u \arrow[r,"\iota_{S_u}"] \arrow[d,swap, "\pi_M|_{S_u}", "\wr"']& U \arrow[d, "\pi_M"]  \\
   S_x  \arrow[r, "\iota_{S_x}"] & M\\
\end{tikzcd}\]
{implies that the singular foliations ${\iota_{S_x}^{-1}\cF_M}$ and $ {\iota_{S_u}^{-1}\cF_U}$ correspond under the above diffeomorphism, where $\cF_U:=\pi_M^{-1}\CF_M$.}

 (iii) follows from (ii), since the isotropy Lie algebra at a point coincides with the isotropy Lie algebra of the transverse foliation at that point, see \cite[Rem. 2.6]{AZ1}.
\end{proof}

\begin{ex}
a) {Given distinct  integers $k,l>0$, the singular foliation on the real line generated by the vector field $x^k \frac{\partial}{\partial x}$ and the one generated by $x^l \frac{\partial}{\partial x}$ lie in different Morita equivalence classes. This can be seen noticing that there are no {diffeomorphism between neighbourhoods $S^k$ and $S^l$ of the origin
that map $x^k \frac{\partial}{\partial x}|_{S^k}$ to the product of  $x^l \frac{\partial}{\partial x}|_{S^l}$ with a no-where vanishing function}, and then applying proposition \ref{prop:corrleaves} ii).
{Notice however that the underlying partitions into leaves and the isotropy {Lie algebras} are the same.}
 }
 
 b) {Consider the singular foliations on $\RR^2$ given by the linear actions of $GL(2,\RR)$
and  $SL(2,\RR)$. They have the same leaves, namely the origin and its complement. The isotropy Lie algebras at the origin are the Lie algebras of $GL(2,\RR)$
and  $SL(2,\RR)$ respectively, hence by proposition \ref{prop:corrleaves} iii) these two singular foliations are not Hausdorff Morita equivalent.
 }
  \end{ex}

{Recall a singular foliation $(M,\cF)$ is \textbf{projective} if there is a vector bundle $A\to M$ such that $\cF \cong \Gamma_c(A)$ as $C^{\infty}(M)$-modules (see \cite{DebordJDG}, where they are called almost regular foliations). In this case $A$ acquires the structure of an almost injective Lie algebroid, i.e. one for which the anchor map is injective on an open dense set. Further, up to isomorphism covering $Id_M$, there is only one 
almost injective Lie algebroid with underlying singular foliation $\cF$.}  
 
 \begin{prop}\label{prop:MEregproj}
Hausdorff Morita equivalence of singular foliations {preserves the following families of singular foliations:}
\begin{enumerate}
\item [(i)] regular foliations
\item [(ii)] projective foliations
\end{enumerate}
\end{prop}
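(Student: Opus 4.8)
The plan is to reduce both statements to the local constancy of a fibre dimension, and then to establish how these dimensions transform under pullback by a surjective submersion. Recall the standard characterisations: a singular foliation $(M,\cF)$ is regular if and only if $x\mapsto \dim F_x$ is locally constant (in which case $F$ is a subbundle of $TM$ and $\cF=\Gamma_c(F)$, this being a consequence of Stefan--Sussmann together with local finite generation), and — by the Serre--Swan theorem for the finitely generated module $\cF$ — it is projective if and only if $x\mapsto\dim\cF_x$ is locally constant, where $\cF_x=\cF/I_x\cF$ is the fibre. It therefore suffices to control $\dim F_p$ and $\dim(\pi^{-1}\cF)_p$ for the pullback foliation along a submersion $\pi$.

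First I would prove the two fibre formulas: for a surjective submersion $\pi\colon P\fto M$, setting $f:=\dim P-\dim M$, and for any $p\in P$ with $x:=\pi(p)$,
\[\dim F_p^{\pi^{-1}\cF}=\dim F_x^{\cF}+f,\qquad \dim(\pi^{-1}\cF)_p=\dim\cF_x+f.\]
The first is immediate: every vertical vector field lies in $\pi^{-1}\cF$, so $\ker(d\pi)_p\subset F_p^{\pi^{-1}\cF}$, while by Definition \ref{def:pullback} the image under $d\pi$ of any element of $\pi^{-1}\cF$ evaluates at $p$ into $F_x^{\cF}$, and projectable lifts realise all of $F_x^{\cF}$; the resulting short exact sequence $0\to\ker(d\pi)_p\to F_p^{\pi^{-1}\cF}\to F_x^{\cF}\to 0$ gives the claim. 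For the second formula I would work in a local trivialisation $P\cong M\times\RR^f$ in which $\pi$ is the projection: if $X_1,\dots,X_k$ are minimal local generators of $\cF$ near $x$ (so $k=\dim\cF_x$), then $\pi^{-1}\cF$ is generated near $p$ by the horizontal lifts $\hat X_1,\dots,\hat X_k$ together with the vertical fields $\partial_{t_1},\dots,\partial_{t_f}$; one then checks that these $k+f$ generators are minimal at $p$ by separating vertical from horizontal components in a would-be relation modulo $I_p$ and restricting the horizontal part to the slice $M\times\{t_0\}$, which reduces it to a relation among the $X_i$ modulo $I_x$, trivial by minimality.

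With these formulas in hand the proposition follows by a symmetric argument. Write $\cG:=\pi_M^{-1}\cF_M=\pi_N^{-1}\cF_N$ on $P$, as in Definition \ref{def:defMEfol}. If $\cF_M$ is regular, then $\dim F_x^{\cF_M}$ is locally constant, so by the first formula applied to $\pi_M$ the function $\dim F_p^{\cG}$ is locally constant on $P$; applying the first formula to $\pi_N$ gives $\dim F_{\pi_N(p)}^{\cF_N}=\dim F_p^{\cG}-(\dim P-\dim N)$, locally constant on $P$, and since $\pi_N$ is an open surjection this descends to local constancy of $\dim F_y^{\cF_N}$ on $N$; hence $\cF_N$ is regular. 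The identical argument with the second formula and $\dim\cF_\bullet$ in place of $\dim F_\bullet$ proves that projectivity is preserved. (For part (ii) one may alternatively combine Lemma \ref{lem:pullbackalgfol} with the observation that the pullback of an almost injective Lie algebroid is almost injective — since $\ker\hat{\#}\cong\pi^*(\ker\#)$ vanishes on the dense open set where $\#$ is injective — to see directly that the pullback of a projective foliation is projective, though the descent still requires the fibre count.)

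The main obstacle is the second fibre formula: showing that minimal generators of $\cF$ lift to minimal generators of $\pi^{-1}\cF$ modulo the vertical directions requires careful bookkeeping of the module relations, and it is exactly here that the precise definition of the pullback foliation and of the fibre $\cF_x=\cF/I_x\cF$ enters (in contrast, the formula for $F_x$ is an easy linear-algebra statement). The remaining non-elementary ingredient, that local constancy of $\dim\cF_x$ characterises projectivity, is Serre--Swan and I would simply invoke it; note also that connectedness of the fibres of $\pi_M,\pi_N$ plays no role here, openness of the submersions being all that the descent uses.
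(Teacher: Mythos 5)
Your argument is correct, but it takes a genuinely different route from the paper. The paper first invokes remark \ref{lem:localsubm} to replace the Morita equivalence by a single (locally defined) submersion $\Psi\colon W\to N$ with $\iota_W^{-1}\cF_M=\Psi^{-1}\cF_N$, and then reduces everything to the statement that pulling back along a submersion preserves regularity (immediate) and projectivity (via Def.~\ref{def:pullbackLA} and lemma \ref{lem:pullbackalgfol}: the pullback of the almost injective Lie algebroid associated to a projective foliation is again almost injective). You instead characterise both properties numerically --- regularity as local constancy of $\dim F_x$, projectivity as local constancy of $\dim\cF_x$ via Serre--Swan --- prove the two fibre formulas $\dim F_p^{\pi^{-1}\cF}=\dim F_{\pi(p)}^{\cF}+f$ and $\dim(\pi^{-1}\cF)_p=\dim\cF_{\pi(p)}+f$, and descend along the open surjections $\pi_M,\pi_N$. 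What your approach buys is that it treats the two directions (ascent to $P$ and descent to $N$) completely symmetrically and makes the descent explicit, whereas the paper's stated reduction only spells out the ``pullback preserves the property'' direction and leaves the converse (that the property of $\Psi^{-1}\cF_N$ forces it for $\cF_N$) implicit; it also avoids the almost injective Lie algebroid machinery entirely. What it costs is the bookkeeping in your second fibre formula (minimality of the lifted generators $\hat X_1,\dots,\hat X_k,\partial_{t_1},\dots,\partial_{t_f}$ modulo $I_p$), which the paper sidesteps, and the reliance on the Serre--Swan-type characterisation of projective foliations, which is standard (it appears in \cite{AZ1} and \cite{DebordJDG}) but is not stated in this paper. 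Your parenthetical alternative for part (ii) is essentially the paper's own argument.
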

\begin{proof} {Thanks to remark \ref{lem:localsubm}  it suffices to show that, given a surjective submersion  
$\Psi\colon M\to N$   and   a singular foliation $\CF$ on $N$, the pullback foliation $\Psi^{-1}\cF$ is regular (resp. projective) whenever $\CF$ is. For the regular this is clear, implying (i). For the projective case,} 
let $A$ be the almost injective Lie algebroid associated to $\cF$. The pullback Lie algebroid $\Psi^{-1}A$ is also almost injective, as one checks using Def.  \ref{def:pullbackLA}, and its underlying foliation is $\Psi^{-1}\cF$ by lemma \ref{lem:pullbackalgfol}.\end{proof}

{It is not known whether  {all singular foliations  are locally induced by} a Lie algebroid\footnote{Globally this is not the case \cite[lemma 1.3]{AZ1}.}. We now show that Morita equivalence preserves the property that, around a given point, a singular foliation arises from a Lie algebroid.
\begin{prop}\label{prop:folalgloc}
Let  $(M,\cF_M)$ and $(N,\cF_N)$ be {Hausdorff} Morita equivalent singular foliations, and fix a Hausdorff   Morita equivalence  between them.
Let $x\in M$ and $y\in N$ lie on corresponding leaves. Then $\cF_M$ is induced by a Lie algebroid near $x$ if{f} $\cF_N$ is induced by a Lie algebroid near $y$.
\end{prop}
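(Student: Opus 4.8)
The plan is to reduce the statement to a single local submersion and then handle the two implications separately, exploiting that ``being induced by a Lie algebroid'' is a leaf-local property. Since the assertion is symmetric in $M$ and $N$, I would assume $\dim M\ge \dim N$. By Remark \ref{lem:localsubm} there is a neighbourhood $W$ of $x$ and a submersion $\Psi\colon W\to N$ with $\iota_W^{-1}\cF_M=\Psi^{-1}\cF_N$; moreover $\Psi(x)=\pi_N(\bb(x))$ lies in the leaf $L_N$ corresponding to the leaf of $x$ by Proposition \ref{prop:corrleaves}(i), which is the leaf of $y$. As the flows of vector fields in $\cF$ act by local diffeomorphisms preserving $\cF$, the property ``$\cF$ is induced by a Lie algebroid near $p$'' depends only on the leaf of $p$ (transporting the inducing algebroid by the flow diffeomorphism); hence it suffices to prove the statement for $\cF_N$ near $\Psi(x)$ rather than near $y$.

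For the implication ``$\cF_N$ algebroid $\Rightarrow$ $\cF_M$ algebroid'' I would simply pull back. Assuming $\iota_V^{-1}\cF_N=\#\,\Gamma_c(A)$ for a Lie algebroid $A$ on a neighbourhood $V$ of $\Psi(x)$, I shrink $W$ so that $\Psi(W)\subseteq V$; then $\Psi\colon W\to \Psi(W)$ is a surjective submersion onto an open set and, applying Lemma \ref{lem:pullbackalgfol} to the pullback algebroid $\Psi^{-1}A$ of Definition \ref{def:pullbackLA}, I obtain
$$\iota_W^{-1}\cF_M=\Psi^{-1}\big(\#\,\Gamma_c(A)\big)=\hat{\#}\,\Gamma_c(\Psi^{-1}A),$$
so that $\cF_M$ is induced by the Lie algebroid $\Psi^{-1}A$ near $x$.

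For the converse I would restrict the algebroid to a transversal. Suppose $\iota_W^{-1}\cF_M=\#\,\Gamma_c(A')$ for a Lie algebroid $A'$ on $W$. Choose a local section $\bb\colon N'\to W$ of $\Psi$ over a neighbourhood $N'$ of $\Psi(x)$ with $\bb(\Psi(x))=x$; its image $T:=\bb(N')$ is transverse to the $\Psi$-fibres. Since a pullback foliation always contains the compactly supported vertical vector fields $\Gamma_c(\ker\Psi_*)$ (Definition \ref{def:pullback}), the anchor image $\#(A')$ contains $\ker\Psi_*$ along $T$, and as $\ker\Psi_*\oplus TT=TW$ along $T$ we get $\#(A')+TT=TW$ there, i.e. $T$ is transverse to the anchor. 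The key step is then to form the \emph{restriction} $A'_T:=(\#')^{-1}(TT)\subseteq A'|_T$: transversality guarantees that the composite $A'|_T\xrightarrow{\#'}TW|_T\to (TW/TT)|_T$ is surjective of constant rank, so $A'_T$ is a vector bundle over $T$, it is closed under the bracket and inherits a Lie algebroid structure, and its anchor generates exactly the transverse foliation $\iota_T^{-1}\cF_M=\bb^{-1}(\Psi^{-1}\cF_N)=(\Psi\circ\bb)^{-1}\cF_N=\cF_N$ near $\Psi(x)$. Thus $\cF_N$ is induced by the Lie algebroid $A'_T$ near $\Psi(x)$, and hence near $y$.

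I expect the main obstacle to be precisely the construction of the restricted Lie algebroid $A'_T$ on the transversal $T$ together with the verification that its induced foliation is exactly the pullback $\bb^{-1}(\iota_W^{-1}\cF_M)$. The bundle and bracket properties are standard consequences of transversality to the anchor, but matching the foliations requires care, since the pullback along the embedding $\bb$ is not covered by the submersion case of Definition \ref{def:pullback}; I would handle this by working with the vector fields tangent to $T$ and using that $T$ is a slice transverse to the leaves, in the same spirit as Proposition \ref{prop:corrleaves}(ii).
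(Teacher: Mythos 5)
Your proof is correct and follows essentially the same route as the paper's: reduce via Remark \ref{lem:localsubm} to a single local submersion $\Psi\colon W\to N$, pull back the Lie algebroid along $\Psi$ for one implication, and for the converse restrict to the Lie subalgebroid $\#^{-1}(TS)$ over a slice transverse to the $\Psi$-fibres. Your extra care in justifying that one may replace $y$ by $\Psi(x)$ (by flowing along the leaf) and in verifying transversality of the anchor to the slice merely fills in steps the paper leaves implicit.
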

}

\begin{proof}
{Without loss of generality, assume that $\dim(M)\ge \dim(N)$.} By Remark \ref{lem:localsubm}
there is a surjective submersion $\Psi\colon W\to Z$, defined on a neighbourhood  $W$ of $x$ in $M$, such that $\cF_M|_{W}=\Psi^{-1}(\cF_N|_Z).$ {All along this proof we adopt the notation $\cF_M|_{W}:=\iota_W^{-1}\cF_M$, for $\iota_W$ the inclusion. {We can assume that $\Psi(x)=y$.}}

If $A'$ is a Lie algebroid inducing $\cF_N|_Z$, then the pullback Lie algebroid $\Psi^{-1}(A')$ is a Lie algebroid inducing $\cF_M|_{W}$, by lemma \ref{lem:pullbackalgfol}. Conversely, if $A$ is a Lie algebroid inducing $\cF_M|_{W}$, take any slice $S\subset M$ through $x$ transverse to the fibres of $\Psi$. The map $\Psi$ restricts to an isomorphisms between the foliated manifolds $(S, \cF_M|_S)$ and $(Z,\cF_N|_Z)$, {shrinking $Z$ if necessary.} The former  is induced by the restricted Lie algebroid $\#^{-1}(TS)$
(the Lie subalgebroid of $A$ given as the preimage of $TS$ under the anchor map.)
\end{proof}

\subsection{{Elementary examples}}\label{subsec:elemex}

{The next three subsections are dedicated to  examples 
 of  {Hausdorff} Morita equivalent singular foliations, starting with the elementary ones.}

\begin{ex}[Isomorphic foliations]
Two foliated manifolds $(M,\CF_M)$ and $(N,\CF_N)$ are said to be isomorphic if there exists a diffeomorphism $\phi:M\fto N$ such that $\phi^{-1}\CF_N=\CF_M$. Two isomorphic foliated manifolds are {Hausdorff} Morita equivalent.
\end{ex}

\begin{ex}[Full foliations]
Any two connected manifolds $M$ and $N$ with the full foliations {$\vX_c(M)$ and $\vX_c(N)$} are {Hausdorff} Morita equivalent, using $P=M\times N$ and its projection maps. 
\end{ex}

\begin{ex}[Zero foliations]
Two manifolds $M$ and $N$ with the zero foliations are {Hausdorff} Morita equivalent if and only if they are diffeomorphic.
\end{ex}

 \begin{ex}[Simple foliations]
{A regular foliation $F$ on a manifold $M$ is called simple if the leaf space $M/F$ is a smooth manifold such that the projection map  is a submersion. The foliation $F$ on $M$ and the zero foliation on $M/F$ are Hausdorff Morita equivalent.}
\end{ex}

 Given two foliated manifolds $(M,\CF_M)$ and $(N,\CF_N)$, we define the {\bf product foliation} as the following submodule of $\CX_c(M\times N)$:
\[\CF_M\times \CF_N:=\text{Pr}_M^{-1}\CF_M\cap \text{Pr}_N^{-1} \CF_N,\]
where $\text{Pr}_M\colon M\times N\fto M$ and $\text{Pr}_N\colon M\times N\fto N$ are the projections.
 {The following example is worked out in \cite{AlfonsoThesis}.} 
 \begin{ex}[Product foliations]\label{ex:productfol}
Given two pairs of {Hausdorff} Morita equivalent foliated manifolds $(M,\CF_M)\simeq_{ME} (N,\CF_N)$ and $(M',\CF_{M'})\simeq_{ME} (N',\CF_{N'})$, we have $$(M\times M',\CF_M\times \CF_{M'})\simeq_{ME}(N\times N',\CF_N\times\CF_{N'}).$$ 
\end{ex}

\subsection{{Examples obtained by pushing forward foliations}} 
\label{subsection:expush}

{A bisubmersion $(V,\bt,\bs)$ 
between   foliated manifolds $(M,\CF_M)$ and $(N,\CF_N)$ (see Def. \ref{def:bisub}), when $\bs$ and $\bt$ have connected fibres, is a Morita equivalence 
between $\cF_M|_{\bs(V)}$ and $\cF_N|_{\bt(V)}$. Here we construct Morita equivalences of this kind, starting from simple data for which concrete examples can be found quite easily.}

We start reproducing \cite[lemma 3.2]{AZ1}, about quotients of foliated manifolds. 
 
\begin{lem}\label{prop:submfol} Let $\pi\colon P \to M$ be a {surjective} submersion with connected fibres. Let $\cF$ be a singular foliation on $P$, such that $\Gamma_c(\ker d\pi) \subset \cF$. Then there is a unique singular foliation  $\cF_M$ on $M$
with $\pi^{-1}(\cF_M) = \cF$. 
\end{lem}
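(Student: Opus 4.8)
The plan is to build $\cF_M$ by pushing forward the $\pi$-projectable vector fields contained in $\cF$, and then to reduce everything to a local statement about product submersions. Working with the sheaf $\cS^{\cF}$ of Remark \ref{rem:sheavesbij}, I would set, for every open $U\subseteq M$,
\[\cS_M(U):=\{X\in\CX(U)\st \exists\ \pi\text{-projectable } \tilde X\in \cS^{\cF}(\pi^{-1}(U))\ \text{with}\ d\pi\circ\tilde X=X\circ\pi\},\]
and define the candidate foliation as $\cF_M:=(\cS_M(M))_c$. That $\cS_M(U)$ is a $C^\infty(U)$-module is immediate, since $(\pi^*f)\tilde X$ is again projectable and still lies in $\cS^{\CF}$; involutivity of $\cS_M$ follows from involutivity of $\cS^{\CF}$ together with the fact that the bracket of two $\pi$-related pairs is $\pi$-related. \emph{Uniqueness} is the cheapest point and I would do it first: if $\pi^{-1}(\cF_M)=\pi^{-1}(\cF_M')$, then for $X\in\cF_M$ a local $\pi$-projectable lift $\tilde X$ (a $y$-constant lift built in a submersion chart) lies in $\pi^{-1}(\cF_M)=\pi^{-1}(\cF_M')$; pulling back along a local section of $\pi$ and using that singular foliations are local objects (Remark \ref{rem:sheavesbij}) forces $X\in\cF_M'$, whence $\cF_M=\cF_M'$.

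The remaining assertions are local, so I would work in a submersion chart $\cO\cong U\times V$ with $V$ a ball and $\pi$ the first projection, and write $\cS:=\cS^{\CF}(\cO)$. The hypothesis $\Gamma_c(\ker d\pi)\subset\cF$ gives $\partial_{y_1},\dots,\partial_{y_k}\in\cS$. Writing an element of $\cS$ as $\sum_i a_i\partial_{x_i}+(\text{vertical})$, the horizontal coefficients of $[\partial_{y_j},Z]$ are exactly $\partial_{y_j}a_i$, so involutivity shows that the finitely generated module $\bar\cS$ of coefficient tuples $(a_i)\subset C^\infty(\cO)^{m}$ is closed under $\partial_{y_1},\dots,\partial_{y_k}$; equivalently, the flows of the vertical fields preserve $\cS$. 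The crux is the claim that such a $\partial_y$-closed finitely generated submodule is extended from $C^\infty(U)^m$, or geometrically that $\cS=(\mathrm{proj}_U)^{-1}(\cS_0)$, where $\cS_0$ is the restriction of $\cS$ to the slice $U\times\{0\}$ (this slice is transverse to $\cS$ precisely because $\cS$ contains all vertical directions). Concretely this says that $\cS$ is generated over $C^\infty(\cO)$ by the $\pi$-projectable fields it contains.

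To prove this crux I would show that the flat $\partial_y$-connection induced on the finitely generated quotient $C^\infty(\cO)^m/\bar\cS$ is trivial along the contractible fibre $V$ — a relative Poincar\'e lemma in the $V$-directions — so that this quotient, and hence $\bar\cS$, is pulled back from $U$. This is exactly where the smooth category is delicate, because flat functions obstruct a naive Taylor-in-$y$ argument, and I expect it to be the \emph{main obstacle}. Granting it, the projections of the projectable generators of $\cS$ are finitely many local generators of $\cS_M$, so $\cF_M$ is locally finitely generated, i.e. a genuine singular foliation.

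It then remains to check $\pi^{-1}(\cF_M)=\cF$ and to glue. The inclusion $\pi^{-1}(\cF_M)\subseteq\cF$ is easy: a projectable field lying over an element of $\cF_M$ differs from a chosen lift in $\cF$ by a vertical field, which is in $\cF$ by hypothesis, and one closes up under multiplication by compactly supported functions and finite sums. The reverse inclusion $\cF\subseteq\pi^{-1}(\cF_M)$ is exactly the local generation by projectable fields established above, patched with a partition of unity. Finally, this is where the \emph{connectedness of the fibres} enters: it guarantees that the slice foliation $\cS_0$ is independent of the chosen slice, since any two points of a fibre are joined by a path along which the $\cF$-preserving vertical flows identify the restrictions. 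Hence the local pieces $\cS_0$ glue to the single global $\cF_M$ defined above, which completes the proof.
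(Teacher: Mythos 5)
The paper does not actually prove Lemma \ref{prop:submfol}: it reproduces it from \cite[lemma 3.2]{AZ1} without proof, so the comparison has to be with the argument given there. Your overall architecture (define $\cF_M$ from the $\pi$-projectable fields in $\cF$, reduce to a product chart $U\times V$, prove uniqueness by pulling back along a local section, invoke connectedness of the fibres to glue) matches that argument in outline, and your uniqueness step and the inclusion $\pi^{-1}(\cF_M)\subseteq\cF$ are correct as written.

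The genuine gap is exactly where you locate it: the ``crux'' that a finitely generated, $\partial_y$-closed submodule $\bar\cS\subset C^\infty(U\times V)^m$ is extended from $U$. The route you propose --- trivialising the induced flat $\partial_y$-connection on the quotient $C^\infty(\cO)^m/\bar\cS$ along the contractible fibre --- does not close it: that quotient is finitely generated but in general not finitely presented or projective, so no off-the-shelf ``flat connection over a contractible factor implies pulled back from the base'' lemma applies; in the smooth category such a statement is essentially equivalent to what you are trying to prove, and the flat-function phenomenon you mention is precisely why a direct Taylor or connection argument fails. The statement is true, but the standard proof goes through dynamics rather than module theory: one first shows that the time-$t$ flow of any element of $\cF$ (in particular of a cut-off of $\partial_{y_j}$) maps $\cF$ to $\cF$ --- this is \cite[proposition 1.6]{AndrSk}, proven by a Gronwall-type estimate on coefficients with respect to a finite local generating set --- and then iterates the local splitting theorem \cite[proposition 1.12]{AndrSk} for the vertical coordinate fields to conclude that near every point $\cF$ is generated by the vertical fields together with $y$-independent (hence projectable) fields. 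Equivalently, one defines $\cF_M$ locally as the pullback of $\cF$ along a local section $b$ of $\pi$, which is transverse to $\cF$ precisely because $\Gamma_c(\ker d\pi)\subset\cF$, and uses flow-invariance plus connectedness of the fibres to see that the result is independent of $b$ and satisfies $\pi^{-1}(\cF_M)=\cF$. Without importing one of these inputs, your local finite generation of $\cF_M$ and the inclusion $\cF\subseteq\pi^{-1}(\cF_M)$ remain unproven.
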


{
\begin{rem}\label{rem:easyME}
Given two {surjective} submersions $\bs \colon U \to M$ and $\bt \colon U \to N$  with connected fibres, let $\cF_M$ be a singular foliation on $M$ such that $\bs^{-1}(\cF_M)\supset  \Gamma_c(\ker d\bt)$. Then,  by lemma \ref{prop:submfol}, there is a unique singular foliation    $\cF_N$ on $N$  such that  $\bs^{-1}(\cF_M)={\bt}^{-1}(\cF_N)$. {In particular,} $(M,\CF_M)\simeq_{ME} (N,\CF_N)$. {In other words,} we can ``transport'' the foliation ${\cF_M}$ on $M$ to a Hausdorff Morita equivalent foliation on $N$.
 \end{rem}
} 

\begin{cor}\label{cor:pushdowntwo}
 Given two submersions $\bs \colon U \to M$ and $\bt \colon U \to N$  with connected fibres, assume that 
\begin{equation*}
[\Gamma_c(\ker d\bs), \Gamma_c(\ker d\bt)]\subset \Gamma_c(\ker d\bs) + \Gamma_c(\ker d\bt)=:\cF_U.
\end{equation*}
 Then there are  unique foliations $\cF_M$ and $\cF_N$ on $M$ and $N$ respectively such that  $\bs^{-1}(\cF_M)={\bt}^{-1}(\cF_N) = \cF_U$. In particular,  $(M,\CF_M)\simeq_{ME} (N,\CF_N)$.
   \end{cor}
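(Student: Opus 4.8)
The plan is to reduce the statement to a double application of Lemma \ref{prop:submfol}, one for $\bs$ and one for $\bt$. The only thing that genuinely needs checking before that machinery kicks in is that $\cF_U := \Gamma_c(\ker d\bs) + \Gamma_c(\ker d\bt)$ is itself a singular foliation on $U$; once this is known, everything else is a formal consequence of results already established in the excerpt.

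First I would verify that $\cF_U$ is a singular foliation, by checking the three defining properties of Definition \ref{def:singfol}. That $\cF_U$ is a $\CI(U)$-submodule of $\CX_c(U)$ is immediate, being a sum of two such submodules. Local finite generation also comes for free: $\ker d\bs$ and $\ker d\bt$ are vector subbundles of $TU$ (kernels of submersions), so each of $\Gamma_c(\ker d\bs)$ and $\Gamma_c(\ker d\bt)$ is locally finitely generated, and hence so is their sum. The only substantive point is involutivity, and this is exactly where the hypothesis enters: since $\ker d\bs$ and $\ker d\bt$ are integrable distributions we have $[\Gamma_c(\ker d\bs),\Gamma_c(\ker d\bs)]\subset \Gamma_c(\ker d\bs)$ and likewise for $\bt$, while the mixed bracket $[\Gamma_c(\ker d\bs),\Gamma_c(\ker d\bt)]$ is assumed to lie inside $\cF_U$; by bilinearity of the Lie bracket these three facts combine to give $[\cF_U,\cF_U]\subset \cF_U$.

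With $\cF_U$ now a bona fide singular foliation on $U$ satisfying both $\Gamma_c(\ker d\bs)\subset \cF_U$ and $\Gamma_c(\ker d\bt)\subset \cF_U$, I would apply Lemma \ref{prop:submfol} twice: to the submersion $\bs$ to obtain a unique singular foliation $\cF_M$ on $M$ with $\bs^{-1}(\cF_M)=\cF_U$, and to $\bt$ to obtain a unique $\cF_N$ on $N$ with $\bt^{-1}(\cF_N)=\cF_U$. This yields $\bs^{-1}(\cF_M)=\cF_U=\bt^{-1}(\cF_N)$, and the uniqueness of each of $\cF_M$ and $\cF_N$ is inherited from the lemma. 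Since $\bs$ and $\bt$ are surjective submersions with connected fibres, Definition \ref{def:defMEfol} then gives $(M,\cF_M)\simeq_{ME}(N,\cF_N)$ via $(U,\bs,\bt)$.

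I expect no serious obstacle: the content is entirely front-loaded into recognising that the hypothesis is precisely the involutivity condition making $\cF_U$ a foliation, after which the conclusion follows by symmetric bookkeeping. The one point deserving a word of care is surjectivity, since Lemma \ref{prop:submfol} and Definition \ref{def:defMEfol} require surjective submersions whereas the hypothesis only asks for submersions with connected fibres. As a submersion is an open map, one may simply replace $M$ and $N$ by the open sets $\bs(U)$ and $\bt(U)$ (equivalently, read the conclusion as a Morita equivalence between $\cF_M|_{\bs(U)}$ and $\cF_N|_{\bt(U)}$, in the spirit of the remark opening this subsection), after which the argument goes through verbatim.
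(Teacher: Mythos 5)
Your proposal is correct and follows exactly the paper's route: the paper's entire proof is ``Apply lemma \ref{prop:submfol} for the foliation $\cF_U$ twice: to the map $\bs$ and to the map $\bt$.'' Your additional checks --- that the bracket hypothesis is precisely what makes $\cF_U$ involutive (hence a genuine singular foliation), and that one should read the conclusion on the images $\bs(U)$, $\bt(U)$ if the submersions are not surjective --- are details the paper leaves implicit, and they are handled correctly.
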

 \begin{proof}
{Apply lemma \ref{prop:submfol} for the foliation $\cF_U$ twice: to the map $\bs$ and to the map $\bt$.}
\end{proof}

 {
An interesting special case {of Cor. \ref{cor:pushdowntwo}} is when the submersions arise from Lie group actions.

\begin{cor}\label{cor:action}
Consider two \emph{connected} Lie groups $G_1,G_2$ acting\footnote{The actions can be both right actions, both left actions, or one right and one left action.}
 freely and properly on a manifold $P$ with commuting actions. Then the following singular foliations are {Hausdorff} Morita equivalent:
\begin{enumerate}
\item the singular foliation on $P/G_1$  given by the induced $G_2$ action, 
\item the singular foliation on $P/G_2$   given by the induced  $G_1$ action.\end{enumerate}
\end{cor}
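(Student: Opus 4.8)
The plan is to realise the two quotient foliations as the two legs of a single Morita equivalence, namely the one carried by $P$ itself. Write $M:=P/G_1$ and $N:=P/G_2$, and let $\bs\colon P\fto M$ and $\bt\colon P\fto N$ be the quotient maps. Since both actions are free and proper, $\bs$ and $\bt$ are surjective submersions; since $G_1$ and $G_2$ are connected, their fibres (the $G_1$- resp.\ $G_2$-orbits, diffeomorphic to $G_1$ resp.\ $G_2$) are connected. The commuting hypothesis guarantees that the $G_2$-action descends to $M$ and the $G_1$-action descends to $N$; write $\cF_M$ for the singular foliation on $M$ defined by the descended $G_2$-action and $\cF_N$ for the one on $N$ defined by the descended $G_1$-action. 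I would then prove that $\bs^{-1}(\cF_M)=\bt^{-1}(\cF_N)=\Gamma_c(\ker d\bs)+\Gamma_c(\ker d\bt)=:\cF_U$, which yields the claim directly from Definition \ref{def:defMEfol} applied to $(P,\bs,\bt)$.

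The first step is to check the hypothesis of Corollary \ref{cor:pushdowntwo}. Because the $G_1$-action is free, a basis $\xi_1,\dots,\xi_k$ of $\g_1$ produces fundamental vector fields $(\xi_i)_P$ forming a global frame of the subbundle $\ker d\bs\subset TP$, so that $\Gamma_c(\ker d\bs)=\sum_i \CI_c(P)\cdot(\xi_i)_P$; symmetrically, a basis $\eta_1,\dots,\eta_\ell$ of $\g_2$ frames $\ker d\bt$. The key observation is that commuting actions have commuting fundamental vector fields, $[(\xi_i)_P,(\eta_j)_P]=0$, since the flow of $(\xi_i)_P$ (a one-parameter subgroup of the $G_1$-action) commutes with that of $(\eta_j)_P$ (a one-parameter subgroup of the $G_2$-action). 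A Leibniz-rule computation then gives, for $f,g\in\CI_c(P)$, the identity $[f(\xi_i)_P,g(\eta_j)_P]=f((\xi_i)_P g)(\eta_j)_P-g((\eta_j)_P f)(\xi_i)_P$, whose two summands lie in $\Gamma_c(\ker d\bt)$ and $\Gamma_c(\ker d\bs)$ respectively. Hence $[\Gamma_c(\ker d\bs),\Gamma_c(\ker d\bt)]\subset\cF_U$, so Corollary \ref{cor:pushdowntwo} applies and produces unique singular foliations on $M$ and $N$ whose pullbacks under $\bs$ and $\bt$ both equal $\cF_U$, together with the Morita equivalence via $(P,\bs,\bt)$.

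It remains, and this is the main step, to identify these foliations with the residual-action foliations $\cF_M$ and $\cF_N$. By the uniqueness clause in Lemma \ref{prop:submfol} it suffices to show $\bs^{-1}(\cF_M)=\cF_U$ (and symmetrically $\bt^{-1}(\cF_N)=\cF_U$). Since $\bs$ intertwines the two $G_2$-actions, each generator $(\eta_j)_P$ of $\Gamma_c(\ker d\bt)$ is $\bs$-projectable and projects to the fundamental vector field $\overline{\eta_j}$ of the descended $G_2$-action, a generator of $\cF_M$, while $\bs$-vertical fields project to $0\in\cF_M$; this gives $\cF_U\subset\bs^{-1}(\cF_M)$ at once. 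For the reverse inclusion I would take a generator $fY$ of $\bs^{-1}(\cF_M)$, with $f\in\CI_c(P)$ and $Y$ projecting to $\sum_j h_j\overline{\eta_j}\in\cF_M$, and subtract the lift $\sum_j (h_j\circ\bs)(\eta_j)_P$ so that the difference becomes $\bs$-vertical; multiplying back by the compactly supported $f$ places $fY$ in $\Gamma_c(\ker d\bt)+\Gamma_c(\ker d\bs)=\cF_U$. The one delicate point of the whole argument is precisely this compact-support bookkeeping: the natural lift of a generator of $\cF_M$ is not compactly supported (its support is $G_1$-saturated), so the argument must exploit that the pullback foliation is built by first taking projectable lifts and only afterwards multiplying by $\CI_c(P)$. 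Everything else is routine, and by symmetry the same computation yields $\bt^{-1}(\cF_N)=\cF_U$, completing the proof.
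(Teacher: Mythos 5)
Your proposal is correct and follows essentially the same route as the paper: verify the bracket hypothesis of Corollary \ref{cor:pushdowntwo} via the commutation of the fundamental vector fields of the two actions, and then identify the resulting foliations with the residual-action foliations by checking that each pulls back to $\Gamma_c(\ker d\bs)+\Gamma_c(\ker d\bt)$ (the foliation of the $G_1\times G_2$-action), invoking the uniqueness in Lemma \ref{prop:submfol}. The paper dispatches the pullback identification as ``straightforward''; you supply the details, including the compact-support bookkeeping, and they check out.
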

\begin{proof} Since the {infinitesimal} generators of the $G_1$-action commute with those of the $G_2$-action, the hypotheses of Cor. \ref{cor:pushdowntwo} are satisfied. It is straightforward that the singular foliation on $P/G_1$   induced by the $G_2$ action pulls back to the singular foliation on $P$   induced by the $G_1\times G_2$ action, {and similarly for the singular foliation on $P/G_2$.}
\end{proof}
Notice that the {Hausdorff} Morita equivalence is realised by $P$ with the projection map.
{When the singular foliation on $P$ given by the $G_1\times G_2$ action is  regular}, the induced foliations on $P/G_1$ and $P/G_2$ are also regular.
}

\subsubsection{Pushing forward foliations on Lie groups}

{Now we specialize {Cor. \ref{cor:action}} even further, taking $P$ to be a Lie group   and $G_1$,$G_2$ to be two connected closed subgroups acting  respectively by left and right multiplication. We  present  {two} examples.
\begin{ex}
 Let ${P}=U(2)$, $G_1=SU(2)$, and let $G_2$ consist of the diagonal matrices in $SU(2)$
 (hence $G_2\cong U(1)$). 
{The quotient of the left action of $SU(2)$ on $U(2)$ is $SU(2)\backslash U(2)\cong S^1$, since the homomorphism $det\colon U(2)\to S^1$ has kernel $SU(2)$.}  The   action of $G_2$ on $U(2)$ by right multiplication descends to the trivial action on $S^1$.
Hence on $S^1$ we obtain the (regular) foliation by points.
By Cor. \ref{cor:action}, it is { Hausdorff} Morita equivalent to the (regular) foliation on $U(2)/G_2$ by orbits of the left $SU(2)$-action.
\end{ex}

\begin{ex}
{We apply Cor. \ref{cor:action} to actions of the Lie groups $SO(2n)$ and $U(n)$ on   $P=SO(2n+1)$.}
We can include $SO(2n)$ in $SO(2n+1)$ {as matrices with  $1$ in the bottom right corner}. {Left multiplication induces a left action of $SO(2n)$ on $SO(2n+1)$ with quotient manifold $SO(2n)\backslash SO(2n+1) \cong S^{2n}$.}

On the other hand, we can include $U(n)$ in $SO(2n+1)$ {as the unitary matrices with  $1$ in the bottom right corner.}
{Right multiplication induces a right action of $U(n)$ on $SO(2n+1)$. The quotient manifold is $$SO(2n+1)/U(n) \cong J(2n+2),$$ where $J(2n+2)$ denotes the set of complex structures in $\RR^{2n+2}$ preserving the canonical inner product and orientation.} {In fact,   there is a diffeomorphism $SO(2n+1)/U(n) \cong SO(2n+2)/U(n+1)$, 
induced by the {transitive} action of $SO(2n+1)$ on $SO(2n+2)/U(n+1)$ inherited from the  left multiplication, which has isotropy group $U(n)$. In turn, $SO(2n+2)/U(n+1)\cong  J(2n+2)$ by considering the 
 action of $SO(2n+2)$ on $J(2n+2)$ by pullbacks, which has isotropy group $U(n+1)$.}

Hence by Corollary. \ref{cor:action} the following singular foliations are { Hausdorff} Morita equivalent: 
\begin{itemize}
\item the singular foliation on $J(2n+2)$  induced by the action of ${SO(2n)\subset SO(2n+2)}$  via pullbacks, 
\item the singular foliation on $S^{2n}$  induced by the  action by {right   matrix multiplication of $U(n)\subset SO(2n+1)$.}
\end{itemize}
{Note that the South pole and North pole of $S^{2n}$ are the only fixed points of the action of $U(n)$, therefore on $S^{2n}$ we have a genuinely singular foliation. 
As a consequence of Morita equivalence, the foliation on $J(2n+2)$ is also non-regular.} 
\end{ex}
 
{In Cor. \ref{cor:action}}, the  case in which $P$ is a  Lie group can be rephrased efficiently as follows.
\begin{prop}\label{prop:MEquottrans}
Fix a Lie group $G$ with two transitive (left) actions on  manifolds $M_1$ and 
 $M_2$. Fix points $p_i\in M_i$, {and assume that the isotropy groups are connected}.
Then the following singular foliations are {Hausdorff} Morita equivalent:
\begin{enumerate}
\item  the singular foliation on $M_1$ induced by the restricted action of the isotropy group $G_{p_2}$,
\item the singular foliation on $M_2$ induced by the restricted  action of the isotropy group $G_{p_1}$.\end{enumerate}
\end{prop}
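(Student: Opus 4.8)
The plan is to deduce the statement from Corollary \ref{cor:action} by taking the auxiliary manifold $P$ to be the group $G$ itself, with the two isotropy groups acting by multiplication on opposite sides. First I would use transitivity: the orbit maps $g\mapsto g\cdot p_i$ identify $M_1\cong G/G_{p_1}$ and $M_2\cong G/G_{p_2}$ as homogeneous spaces, with $G$ acting by left translation. Here $G_{p_i}=\{g\in G: g\cdot p_i=p_i\}$ is a closed, hence embedded, Lie subgroup, and it is connected by hypothesis.

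I would then set $P:=G$, let $G_1:=G_{p_1}$ act on $P$ by right multiplication and $G_2:=G_{p_2}$ act by left multiplication. These two actions commute by associativity of the group product ($k(gh)=(kg)h$), each is free (a group acts freely on itself by translation), and each is proper (a closed subgroup acts properly by multiplication: if $K\subset G\times G$ is compact, the set of $h\in G_{p_i}$ with $(gh,g)\in K$ for some $g$ is a closed subset of a compact set, hence compact). Since $G_{p_1}$ and $G_{p_2}$ are connected, all hypotheses of Corollary \ref{cor:action} are met. This is precisely the step where connectedness of the isotropy groups enters: the fibres of $P\to P/G_i$ are cosets diffeomorphic to $G_i$, and they must be connected for the quotient submersions to qualify as a Hausdorff Morita equivalence.

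It remains to match the two resulting foliations with items (1) and (2). On one side, $P/G_1=G/G_{p_1}=M_1$, and the induced left $G_2=G_{p_2}$-action is exactly the restriction to $G_{p_2}$ of the $G$-action on $M_1$; thus the singular foliation on $P/G_1$ furnished by Corollary \ref{cor:action} is the foliation of item (1). On the other side $P/G_2=G_{p_2}\backslash G$ carries the induced right $G_{p_1}$-action. I would identify $G_{p_2}\backslash G\cong G/G_{p_2}=M_2$ via the inverse map $G_{p_2}g\mapsto g^{-1}G_{p_2}$; under it the right $G_{p_1}$-action is carried to the left $G_{p_1}$-action through $k\mapsto k^{-1}$, so the orbit foliations correspond and the foliation on $P/G_2$ is isomorphic to the foliation of item (2). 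Since isomorphic foliated manifolds are Hausdorff Morita equivalent and the relation is transitive (Lemma \ref{lem:HausMEfolER}), applying Corollary \ref{cor:action} completes the argument.

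The only genuinely delicate point is the left--right bookkeeping, in particular the inverse-map identification $G_{p_2}\backslash G\cong M_2$: because the realization through $P=G$ necessarily presents one homogeneous space as a space of left cosets and the other as a space of right cosets, one of the two foliations is matched only after composing with this anti-isomorphism. Everything else---freeness, properness, commutativity of the actions, and connectedness of the fibres---is a routine verification of the hypotheses of Corollary \ref{cor:action}.
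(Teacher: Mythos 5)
Your proof is correct and follows essentially the same route as the paper: both reduce to Corollary \ref{cor:action} applied to $P=G$ with the two isotropy subgroups acting by multiplication on opposite sides, and then sort out the left/right coset bookkeeping. The only cosmetic difference is that you resolve the mismatch on the $M_2$ side via the explicit anti-isomorphism $G_{p_2}\backslash G\cong G/G_{p_2}$ induced by inversion, whereas the paper instead replaces the left $G$-action on $M_1$ by the associated right action and invokes the fact that a right action and its corresponding left action induce the same singular foliation.
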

\begin{proof}
Recall that the transitive left action of $G$ on $M_2$ induces a diffeomorphism $$G/G_{p_2}\cong M_2, \;[g]\mapsto gp.$$ Here $G/G_{p_2}$ is the manifold of left cosets, i.e. the quotient of $G$ by the $G_{p_2}$-action by \emph{right} multiplication. 

Instead of\footnote{Notice that the singular foliation induced by a right action on a manifold agrees with the one induced by the corresponding left action.}
 using the left action of $G$ on $M_1$, for convenience we consider the associated right action, defined by $m\cdot g:=g^{-1}\cdot m$ for all $g\in G$ and $m\in M_1$. Notice that the isotropy groups at $p_1$ for the two actions are the same.
Similarly to the above,  the transitive right action of $G$ on $M_1$ induces a diffeomorphism $G_{p_1} \backslash G\cong M_1$, where  $G_{p_1} \backslash G$ is the quotient of $G$ by the $G_{p_1}$-action   by \emph{left} multiplication.

Now apply Cor. \ref{cor:action} to the manifold $P:=G$, to the action of $G_{p_1}$ by left multiplication and the action of $G_{p_2}$ by right multiplication. This delivers
{Hausdorff} Morita equivalent singular foliations on $G_{p_1} \backslash G$ and $G/G_{p_2}$.
The residual action of $G_{p_1}$ on the quotient $G/G_{p_2}$ is just the restriction\footnote{This follows from the fact that, under the diffeomorphism $G/G_{p_2}\cong M_2$, the left action of $G$ on $M_2$ corresponds to the action on $G/G_{p_2}$ induced by left multiplication on $G$.} of the left $G$-action on $M_2$. 
Similarly, 
the residual action of $G_{p_2}$ on the quotient $G_{p_1} \backslash G$ is just the restriction of the right $G$-action on $M_1$. 
\end{proof}
}
 
\begin{ex}
The Lie group $SL(2,\RR)$ has a natural transitive action on $\RR^2\backslash\{(0,0)\}$ {by left matrix multiplication}. It is easy to check that the isotropy group at $(1,0)$ is:
\[\left\{ \left(\begin{matrix} 1 & t \\ 0 & 1\end{matrix}\right) \st t\in \RR \right\}.\]

The same Lie group also has a transitive action by Moebius transformations on $\HH\subset \CC$, the {open} upper half-plane. It is easy to check that the isotropy group at $i$ is:
\[\left\{ \left(\begin{matrix} \cos(\theta) & \sin(\theta) \\ -\sin(\theta) & \cos(\theta)\end{matrix}\right) \st \theta\in \RR \right\}.\]

Hence by proposition \ref{prop:MEquottrans} the following singular foliations are {Hausdorff} Morita equivalent ({as can be verified easily directly too)}: 
\begin{itemize}
\item the foliation on $\RR^2\backslash\{(0,0)\}$  given by {concentric circles about the origin},
\item the foliation on $\HH$ by {horizontal lines}. 
\end{itemize}
\end{ex}

\begin{ex}\label{ex:Gr}
The Lie group $SO(n)$ has a transitive   action on the sphere $S^{n-1}\subset \RR^n$ by left matrix multiplication. The stabiliser of the South Pole $p\in S^{n-1}$ consist of matrices with $1$ in the lower right corner, and therefore is isomorphic to $SO(n-1)$.

The same Lie group $SO(n)$ also has a transitive  action on the oriented Grassmannian $\widetilde{Gr_{2,n}}$, the space of oriented planes in $\RR^n$ (it has dimension $2(n-2)$). This action is induced by the action on column vectors in $\RR^n$ by matrix multiplication. The stabiliser of the plane 
spanned by the last two canonical basis vectors $e_{n-1}$ and $e_n$ is $G_1:=SO(n-2)\times SO(2)$. 

Hence by proposition 
\ref{prop:MEquottrans} the following singular foliations are {Hausdorff} Morita equivalent: 
\begin{itemize}
\item the singular foliation on $\widetilde{Gr_{2,n}}$  induced by the   $SO(n-1)$ action
\item the singular foliation on $S^{n-1}$  induced by the  $G_1$ action by   matrix multiplication.
\end{itemize}
Notice that these actions have orbits of  codimension $1$, $2$ and $n-2$.  
\end{ex}

\subsection{Examples obtained from Morita equivalence of related objects}
 \label{subsec:com}

{Several geometric objects (including Poisson manifolds, Lie algebroids, Lie groupoids) have underlying singular foliations. We show that two such objects that are  equivalent (in a suitable sense) induce Hausdorff Morita equivalent singular foliations.} 
 
\subsubsection{{Examples from Morita equivalent Poisson manifolds}}

{A Poisson manifold $(M,\Pi_M)$ gives rise to a Lie algebroid (its cotangent bundle) and hence to a singular foliation $\cF_{\Pi_M}$. Explicitly, $\cF_{\Pi_M}$
consists of $C^{\infty}_c(M)$-linear combinations of Hamiltonian vector fields on $M$.
In particular, the leaves of $\cF_{\Pi_M}$ are exactly the symplectic leaves {of the Poisson structure}.
}

\begin{ex}\label{ex:fulldualpair}
Let $(M,\Pi_M)$ and $(N,\Pi_N)$  be Poisson manifolds. A \emph{full dual pair} \cite[\S 8]{We} consists of a symplectic manifold $(U,\omega)$ with surjective submersions $\bs\colon U\to M$ and $\bt\colon U\to N$ which are Poisson and anti-Poisson maps respectively, and such that $ker(d_u\bs)$ and $ker(d_u\bt)$ are symplectic orthogonal subspaces of $T_uU$ for all $u\in U$. 
Notice that $\Gamma(\ker d\bs)$ is generated  by $\{X_{\bt^*g}:g\in C^{\infty}(N)\}$ as a $C^{\infty}(U)$-module, while $\Gamma(\ker d\bt)$ is generated by $\{X_{\bs^*g}:g\in C^{\infty}(M)\}$. Here we denote by $X_F$ the Hamiltonian vector field of the function $F$.

A full dual pair with connected fibres is a global bisubmersion with connected fibres  for the foliations  $\cF_{\Pi_M}$ and  $\cF_{\Pi_N}$ (see Def. \ref{def:bisub}).
Indeed, since $\bs$ is a Poisson map, for any Hamiltonian vector field $X_{g}$ a $\bs$-lift is given by $X_{\bs^*g}$, hence 
$$\bs^{-1}(\cF_{\Pi_M})=Span_
{C^{\infty}_c(U)}(\{X_{\bs^*g}:g\in C^{\infty}(M)\}+\Gamma(\ker d\bs))=\Gamma_c(\ker d\bt)+\Gamma_c(\ker d\bs),$$
and the analogue equation holds for $\bt$. As a consequence, $(M,\cF_{\Pi_M})\simeq_{ME} (N,\cF_{\Pi_N})$.
\end{ex}

\begin{cor}\label{cor:PoisME}
If two Poisson manifolds are Morita equivalent
\cite{xuME} then their singular foliations   are Hausdorff Morita equivalent.
\end{cor}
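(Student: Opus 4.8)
The plan is to deduce this directly from Example \ref{ex:fulldualpair}. First I would recall the definition of Morita equivalence of Poisson manifolds from \cite{xuME}: two Poisson manifolds $(M,\Pi_M)$ and $(N,\Pi_N)$ are Morita equivalent when there is a symplectic manifold $(U,\omega)$ together with surjective submersions $\bs\colon U\to M$ and $\bt\colon U\to N$ forming a full dual pair, such that $\bs$ and $\bt$ are complete Poisson (resp.\ anti-Poisson) maps with connected and simply connected fibres. In particular, forgetting the completeness and the simple-connectedness, the underlying datum is precisely a full dual pair with connected fibres.

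Next I would observe that Example \ref{ex:fulldualpair} already establishes exactly what is required: it shows that a full dual pair with connected fibres is a global bisubmersion with connected fibres for the singular foliations $\cF_{\Pi_M}$ and $\cF_{\Pi_N}$, and therefore realises a Hausdorff Morita equivalence $(M,\cF_{\Pi_M})\simeq_{ME}(N,\cF_{\Pi_N})$ via the manifold $U$ equipped with the two submersions $\bs$ and $\bt$. Since simply connected fibres are in particular connected, the connectedness requirement of Definition \ref{def:defMEfol} is automatically met by any Poisson Morita equivalence, and no further work is needed.

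There is essentially no obstacle here: the entire content of the corollary resides in Example \ref{ex:fulldualpair}, and the corollary amounts to the remark that Xu's notion of Morita equivalence is, in particular, a full dual pair with connected fibres. The only point worth verifying is that the definition in \cite{xuME} really does impose connectedness of the fibres of $\bs$ and $\bt$ — which it does, as an immediate consequence of the requirement that these fibres be simply connected. The completeness of the maps, which is part of Xu's definition but is not used in Example \ref{ex:fulldualpair}, plays no role in the argument.
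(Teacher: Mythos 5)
Your proposal is correct and coincides with the paper's own proof: the paper likewise notes that Poisson Morita equivalence is realised by a complete full dual pair with simply connected (hence connected) fibres and then invokes Example \ref{ex:fulldualpair}. No further comment is needed.
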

\begin{proof}
Two Poisson manifolds are Morita equivalent if they are related by a complete full dual pair with simply connected fibers. Hence the statement follows from Ex. 
\ref{ex:fulldualpair}.
\end{proof}

\subsubsection{Morita equivalence for Lie groupoids and Lie algebroids} 
We   review briefly Morita equivalence for Lie groupoids and Lie algebroids, recalling a few notions given in \cite{MdMkGrd} and \cite{GinzburgGrot} (see also \cite{LauMatXu}):

\begin{defi}\label{def:MEgroid}
Two Lie groupoids $G\rightrightarrows M$ and $H\rightrightarrows N$ are \textbf{Morita equivalent} if  there exists a (Hausdorff) manifold $P$, and two surjective submersions $\pi_M:P\fto M$ and $\pi_N:P\fto N$ such that $\pi^{-1}_M G\cong \pi_N^{-1}H$.

In this case, we call $(P,\pi_M,\pi_N)$ a Morita equivalence between $G$ and $H$.
\end{defi}
{Alternative characterisations of
 Morita equivalence for Lie groupoids are recalled in  Appendix \ref{sec:MEGrpd}.}

The definition of (weak) Morita equivalence for Lie algebroids is due to Viktor Ginzburg  \cite{GinzburgGrot}:

\begin{defi}\label{def:MEalgoid}
Consider Lie algebroids $A_M$ and $A_N$ over the manifolds $M$ and $N$ respectively. We say they are \textbf{Morita equivalent} if there exists a manifold $P$ and two surjective submersions $\pi_M\colon P\fto M$ and $\pi_N:P\fto N$ with  simply connected fibres such that $\pi_M^{-1}( A_M)\cong \pi_N^{-1}(A_N)$ as Lie algebroids over $P$.
\end{defi}
This definition can be motivated by the following proposition:
 
\begin{prop}\label{prop:MEalggroids}
(i) If $G_M$ and $G_N$ are Morita equivalent {Hausdorff} Lie groupoids with source simply connected fibres, then their Lie algebroids are Morita equivalent.

(ii) If $A_M$ and $A_N$ are Morita equivalent integrable Lie algebroids, then the  source simply connected Lie groupoids integrating them are Morita equivalent.
\end{prop}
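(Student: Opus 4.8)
The plan is to treat the two implications separately, in both cases reducing the statement to Lemma \ref{GtoA}, which is exactly the bridge between the pullback constructions for groupoids and for algebroids.

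For part (i) I would start from a Morita equivalence of $G_M$ and $G_N$, but rather than working with an arbitrary $P$ as in Definition \ref{def:MEgroid} I would represent it by a biprincipal bibundle $P$, using the equivalent characterisations recalled in Appendix \ref{sec:MEGrpd}. The point of this reformulation is the fibre structure of the two moment maps $\pi_M\colon P\to M$ and $\pi_N\colon P\to N$: regarding $P$ as a right principal $G_N$-bundle over $M$, the fibres of $\pi_M$ are single free $G_N$-orbits, hence diffeomorphic to source fibres of $G_N$, and symmetrically the fibres of $\pi_N$ are diffeomorphic to source fibres of $G_M$. Since by hypothesis both groupoids have simply connected source fibres, both $\pi_M$ and $\pi_N$ are surjective submersions with simply connected fibres, which is precisely the extra requirement appearing in Definition \ref{def:MEalgoid} but absent from Definition \ref{def:MEgroid}. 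The bibundle yields an isomorphism of Lie groupoids $\pi_M^{-1}G_M\cong \pi_N^{-1}G_N$ over $P$; differentiating it along the unit section and invoking Lemma \ref{GtoA}(i), which identifies the Lie algebroids of these pullback groupoids with $\pi_M^{-1}A_M$ and $\pi_N^{-1}A_N$, one obtains an isomorphism $\pi_M^{-1}A_M\cong \pi_N^{-1}A_N$ over $P$. Together with the simple connectedness of the fibres, this is exactly a Morita equivalence of $A_M$ and $A_N$.

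For part (ii) I would go in the opposite direction. A Morita equivalence of $A_M$ and $A_N$ provides a manifold $P$ with surjective submersions $\pi_M,\pi_N$ having simply connected fibres and an isomorphism $\pi_M^{-1}A_M\cong \pi_N^{-1}A_N$. Because the fibres are simply connected, Lemma \ref{GtoA}(ii) identifies the source simply connected integrations of these pullback algebroids with $\pi_M^{-1}G_M$ and $\pi_N^{-1}G_N$, where $G_M,G_N$ are the source simply connected integrations of $A_M,A_N$. I would then integrate the algebroid isomorphism: since both pullback groupoids are source simply connected and live over the same base $P$, Lie's second theorem for Lie algebroids promotes the isomorphism $\pi_M^{-1}A_M\cong \pi_N^{-1}A_N$ (covering $\mathrm{id}_P$) to an isomorphism of Lie groupoids $\pi_M^{-1}G_M\cong \pi_N^{-1}G_N$ over $P$. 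As $\pi_M,\pi_N$ are surjective submersions and $P$ is Hausdorff, this is a Morita equivalence of $G_M$ and $G_N$ in the sense of Definition \ref{def:MEgroid}, no connectivity of fibres being required there.

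Routine verifications aside, I expect the genuine content to sit in two places. In part (i) it is the identification of the moment-map fibres of a biprincipal bibundle with source fibres of the groupoids, which is what converts the source simple connectedness hypothesis into the simple connectedness of fibres demanded by the algebroid definition; this is the reason the reformulation via the appendix is worthwhile. In part (ii) the crux is the integration of an isomorphism of the pullback algebroids to one of the source simply connected pullback groupoids, i.e.\ the use of Lie's second theorem together with Lemma \ref{GtoA}(ii); the surjective-submersion hypotheses transfer for free, and it is only the passage from the infinitesimal isomorphism to the global one that genuinely uses the source simple connectedness.
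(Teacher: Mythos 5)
Your proposal is correct and follows essentially the same route as the paper: part (i) is exactly the paper's appeal to Corollary \ref{cor:SConnMEGrpd2} (your identification of the moment-map fibres of a bitorsor with source fibres is precisely the content of that corollary's proof, including the fact that the bitorsor is Hausdorff when the groupoids are) followed by Lemma \ref{GtoA}(i), and part (ii) is the paper's one-line appeal to Lemma \ref{GtoA}(ii) with the integration of the algebroid isomorphism made explicit. No substantive differences.
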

\begin{proof}
(i): By corollary \ref{cor:SConnMEGrpd2} we {obtain the existence of} a Morita equivalence with source simply connected fibres, then using part (1) of lemma \ref{GtoA} we get the desired result.

(ii): is clear by part (2) of lemma \ref{GtoA}.
\end{proof}

\begin{rem}
The essential difference between  Morita equivalence for Lie algebroids and {Hausdorff Morita equivalence} for singular foliations (Def. \ref{def:MEalgoid} and Def. \ref{def:defMEfol}) is that the former requires \emph{simply} connected fibres whereas the latter only connected fibres. This difference is reflected at the groupoid level too, as we now explain.

On the one hand, given two Morita equivalent integrable Lie algebroids, their source simply connected Lie groupoids are Morita equivalent, see proposition \ref{prop:MEalggroids}(ii). On the other hand, singular foliations  also have an associated groupoid, namely the holonomy groupoid defined by Androulidakis and Skandalis  \cite{AndrSk}. In theorem \ref{thm:MEfolgroids} we will show that if two singular foliations are {Hausdorff} Morita equivalent, then their holonomy groupoids are also Morita equivalent. But the holonomy groupoid of a foliation does not have  simply connected fibres in general: on the contrary, it is an adjoint groupoid.
\end{rem}

\subsubsection{{Examples  from Morita equivalence of Lie groupoids and Lie algebroids}}\label{subsubsec:related}

Recall that Lie groupoids give rise to Lie algebroids, which in turn give rise to singular foliations (see example \ref{ex:bisub.grpd}).

\begin{prop}\label{prop: implications}
  Let $G\soutar M$ and $H\soutar N$ source connected {Hausdorff} Lie groupoids, denote their Lie algebroids by $A_M$ and $A_N$, and denote by    $\CF_M=\#(\CG_c(A_M))$ and $\CF_N=\#(\CG(A_N))$ the corresponding singular foliations. Each of the following statements implies the following one (i.e.  $(i)\Rightarrow (ii)   \Rightarrow (iii)$):
\begin{enumerate}
\item[(i)]  $G$ and $H$ are Morita equivalent,
\item[(ii)]  there exists a  manifold $P$ and surjective submersions with connected fibres $\pi_M\colon P\fto M$ and $\pi_N:P\fto N$ satisfying $\pi_M^{-1}A_M\cong \pi_N^{-1} A_N$,
\item[(iii)] the foliated manifolds $(M,\CF_M)$ and $(N,\CF_N)$ are Hausdorff Morita equivalent.
\end{enumerate}    
\end{prop}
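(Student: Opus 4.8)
The plan is to treat the two implications separately, handling $(ii)\Rightarrow(iii)$ first since it is essentially a direct application of Lemma \ref{lem:pullbackalgfol}. Suppose we are given a manifold $P$ with surjective submersions $\pi_M,\pi_N$ having connected fibres and a Lie algebroid isomorphism $\phi\colon \pi_M^{-1}A_M \xrightarrow{\sim} \pi_N^{-1}A_N$ over $\mathrm{Id}_P$. I would observe that any such isomorphism intertwines the two anchor maps into $TP$ (the anchor being part of the algebroid structure), so it carries the image under the anchor of $\CG_c(\pi_M^{-1}A_M)$ onto the corresponding image for $\pi_N^{-1}A_N$. By Lemma \ref{lem:pullbackalgfol} these two images are exactly $\pi_M^{-1}(\CF_M)$ and $\pi_N^{-1}(\CF_N)$; hence $\pi_M^{-1}\CF_M=\pi_N^{-1}\CF_N$, and since the $\pi$'s are surjective submersions with connected fibres this is precisely a Hausdorff Morita equivalence in the sense of Def. \ref{def:defMEfol}.

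For $(i)\Rightarrow(ii)$ the real content is to produce a groupoid Morita equivalence whose two legs have \emph{connected} fibres, and then to pass to Lie algebroids. I would first note that an arbitrary Morita equivalence in the sense of Def. \ref{def:MEgroid} need not have connected fibres: already for the trivial groupoid over a point, any manifold $P$ is a Morita equivalence. I would therefore realise the given Morita equivalence by a principal bibundle $P$ (Appendix \ref{sec:MEGrpd}), taking $\pi_M,\pi_N$ to be its two moment maps, so that $\pi_M^{-1}G\cong \pi_N^{-1}H$ as groupoids over $P$. The key geometric observation is that the fibre of $\pi_M$ over a point is an orbit of the principal $H$-action, which by principality (in particular freeness) the orbit map identifies diffeomorphically with a source (equivalently, target) fibre of $H$; since $H$ is source connected these fibres are connected, and symmetrically for $\pi_N$ using that $G$ is source connected.

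To finish $(i)\Rightarrow(ii)$ I would apply the Lie functor to the groupoid isomorphism $\pi_M^{-1}G\cong \pi_N^{-1}H$ over $\mathrm{Id}_P$, and invoke Lemma \ref{GtoA}(i), which identifies the Lie algebroid of the pullback groupoid $\pi^{-1}G$ with the pullback algebroid $\pi^{-1}A$. This yields $\pi_M^{-1}A_M\cong \pi_N^{-1}A_N$ over $\mathrm{Id}_P$, with $\pi_M,\pi_N$ surjective submersions with connected fibres, establishing (ii) and feeding directly into the argument above for (iii).

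I expect the connectedness of the fibres in $(i)\Rightarrow(ii)$ to be the main obstacle: it is the only place the source-connectedness hypothesis on $G$ and $H$ enters, and it is what forces us to select the bibundle representative of the Morita equivalence rather than argue with an arbitrary one. Pinning down the diffeomorphism between a moment-map fibre and a source/target fibre of the groupoid, together with the passage between source- and target-connectedness via inversion, is the crux; everything else is formal. An alternative to the explicit bibundle argument would be to quote the existence of a Morita equivalence with connected fibres, in the spirit of Corollary \ref{cor:SConnMEGrpd2}.
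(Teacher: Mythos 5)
Your proposal is correct and follows essentially the same route as the paper: the paper proves $(i)\Rightarrow(ii)$ by citing Corollary \ref{cor:SConnMEGrpd2} (whose proof is exactly your bibundle argument identifying the $\pi_M$-fibres with source fibres of $H$) together with Lemma \ref{GtoA}(i), and proves $(ii)\Rightarrow(iii)$ by applying Lemma \ref{lem:pullbackalgfol} twice, just as you do. The only detail you leave implicit is that the bitorsor $P$ can be taken Hausdorff (needed since statement (ii) asks for a manifold), which is part of what Corollary \ref{cor:SConnMEGrpd2} supplies.
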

\begin{proof}
For the first implication, use corollary \ref{cor:SConnMEGrpd2} to get $P$, then we use   twice   part (i) of lemma \ref{GtoA}.
 The second implication   follows using twice  lemma \ref{lem:pullbackalgfol}. 
\end{proof}

\begin{rem}
If $A_M$ and $A_N$ are Morita equivalent Lie algebroids then their singular foliations are Morita equivalent. Indeed the fibres of the maps appearing in Def. \ref {def:MEalgoid} are in particular connected, so the above proposition applies.\end{rem}

\begin{ex} { 
Consider two {connected} Lie groups $G_1,G_2$ acting freely and properly on a manifold $P$ with commuting actions. The transformation groupoids $G_2\ltimes (P/G_1)$ and $G_1\ltimes (P/G_2)$ are Morita equivalent, since under the quotient maps {$P\to P/G_i$} they pull back
to the transformation groupoid  $(G_1\times G_2)\ltimes P$. Hence applying proposition \ref{prop: implications} we obtain an alternative proof of the statement of 
  corollary \ref{cor:action}.}
\end{ex}
 
\section{Morita equivalent holonomy groupoids}\label{sec:hol}  

{We recall how, canonically associated to a singular foliation, there is a open topological groupoid, called \emph{holonomy groupoid}. In  \S \ref{subsec:hol3}, building on   \S \ref{sec:hol2}, we can prove the main statement of the paper (theorem. \ref{thm:MEfolgroids}): Hausdorff Morita equivalence of singular foliations implies the Morita equivalence of their holonomy groupoids. This will allow us to obtain further invariants in \S \ref{subsec:holtrafo} and \S
\ref{subsec:furtherinv}.}  

\subsection{Holonomy groupoids}\label{sec:hol1}

{We review the   construction of the holonomy groupoid of a foliated manifold due to Androulidakis-Skandalis, following \cite[\S 2, \S 3.1]{AndrSk}.  In this whole subsection we fix a foliated manifold $(M,\CF)$.}

\subsubsection{{Bisubmersions and bisections}}

{The holonomy groupoid is constructed using bisubmersions, which we recalled in Def. \ref{def:bisub}.}

 \begin{defi}
Let $(U,\bt_U,\bs_U)$ and $(V,\bt_V,\bs_V)$ be bisubmersions.
\begin{enumerate}
	\item[(i)] The {\bf inverse} bisubmersion of $U$ is $U^{-1}:=(U,\bs_U,\bt_U)$, the bisubmersion obtained interchanging source and target.
	\item[(ii)] Let $W:=U{}_{\bs_U} \!\times_{\bt_V} V$, then $U\circ V:=(W,\bt_U,\bs_V)$ is called the {\bf composition} bisubmersion of $U$ with $V$.
\end{enumerate}
\end{defi} 

\begin{defi}
Given a foliated manifold $(M,\CF)$ and two bisubmersions {$U$ and $V$,} a smooth map $f\colon U\fto V$ is called a {\bf morphism} of bisubmersions if it commutes with the source and the target maps of $U$ and $V$.
\end{defi}

\begin{defi} Consider a foliated manifold $(M,\CF)$, a bisubmersion  $(V,\bt,\bs)$  and $x\in \bs(V)$. 
\begin{enumerate}
\item[(i)] A {\bf bisection} at $x$ consists of a local $\bs$-section $\sigma\colon M'\fto V$, where $M'$ is a neighbourhood of $x$ in $\bs(V)$, such that the image of $\sigma$ is transverse to the fibres of $\bt$.
\item[(ii)] {Given a  diffeomorphism $\phi$ between open subsets of $M$, a bisubmersion $(V,\bt,\bs)$ is said to {\bf carry} $\phi$ at $v\in V$ if  there exists a bisection $\sigma$ through $v$ such that $\phi=\bt\circ\sigma$.}
\end{enumerate}
\end{defi}

The following propositions are to be found in \cite[\S 2.3]{AndrSk}.

\begin{cor}\label{carry}
Let $(U,\bt_U,\bs_U)$ and $(V,\bt_V,\bs_V)$ be bisubmersions and $u\in U$, $v\in V$ be such that $\bs_U(u)=\bs_V(v)=:x$. Then:
\begin{enumerate}
\item[(i)] If there is a local diffeomorphism carried both by $U$ at $u$ and by $V$ at $v$, there exists an open neighbourhood $U'$ of $u$ in $U$ and a morphism $f:U'\fto V$ such that $f(u)=v$.
\item[(ii)] If there is a morphism $g:V\fto U$ such that $g(v)=u$ then there exists an open neighbourhood $U'$ of $u$ in $U$ and a morphism $f:U'\fto V$ such that $f(u)=v$.
\end{enumerate}
\end{cor}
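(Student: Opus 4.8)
The plan is to prove part (i) by an explicit construction and to deduce part (ii) from it, since the real content lies in (i).

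\emph{Reduction of (ii) to (i).} The key observation is that a morphism transports carried diffeomorphisms. Through $v$ there is always a bisection of $V$: since $\ker d_v\bs_V$ and $\ker d_v\bt_V$ have equal dimension they admit a common complement $S\subset T_vV$, and any $\bs_V$-section $\sigma_V$ through $v$ with $T_v(\mathrm{im}\,\sigma_V)=S$ has image transverse to the $\bt_V$-fibres near $v$, hence is a bisection. Put $\phi:=\bt_V\circ\sigma_V$. Then $g\circ\sigma_V$ is a section of $\bs_U$ through $u$, because $\bs_U\circ g=\bs_V$; moreover $\bt_U\circ g\circ\sigma_V=\bt_V\circ\sigma_V=\phi$ is a local diffeomorphism, so the image of $g\circ\sigma_V$ is transverse to the $\bt_U$-fibres. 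Thus $g\circ\sigma_V$ is a bisection of $U$ through $u$ carrying $\phi$, so $U$ and $V$ carry the same local diffeomorphism at $u$ and $v$ respectively, and part (i) produces the desired $f$.

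\emph{Construction for (i).} Write $x:=\bs_U(u)=\bs_V(v)$ and choose bisections $\sigma_U,\sigma_V$ through $u,v$ with $\bt_U\circ\sigma_U=\bt_V\circ\sigma_V=\phi$ and $\bs_U\circ\sigma_U=\bs_V\circ\sigma_V=\mathrm{id}$. First I would parametrise $U$ near $u$ by transporting $\sigma_U$ along vertical flows: pick $\bs_U$-projectable vector fields $Z_1,\dots,Z_r\in\Gamma(\ker d\bt_U)$ (with $r=\dim U-\dim M$) whose values at $u$ span $\ker d_u\bt_U$, so that
\[ \Phi_U(a,\xi):=\exp\Big(\textstyle\sum_i\xi_i Z_i\Big)(\sigma_U(a)) \]
is a local diffeomorphism from a neighbourhood of $(x,0)$ onto a neighbourhood of $u$. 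Setting $X_i:=(\bs_U)_*Z_i$, each $X_i$ lies in $\CF$ because $\Gamma_c(\ker d\bt_U)\subset\bs_U^{-1}\CF$. Next I would lift each $X_i$ to a $\bt_V$-vertical field $W_i$ on $V$ with $(\bs_V)_*W_i=X_i$: one takes any $\bs_V$-lift of $X_i$, which lies in $\bs_V^{-1}\CF=\Gamma_c(\ker d\bs_V)+\Gamma_c(\ker d\bt_V)$, and retains its $\ker d\bt_V$-component. Finally I set
\[ f\big(\Phi_U(a,\xi)\big):=\exp\Big(\textstyle\sum_i\xi_i W_i\Big)(\sigma_V(a)). \]
This map is smooth and sends $u=\Phi_U(x,0)$ to $\sigma_V(x)=v$. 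It is a morphism: verticality of the $Z_i$ and $W_i$ gives $\bt_U\circ\Phi_U(a,\xi)=\phi(a)=\bt_V\circ f\circ\Phi_U(a,\xi)$, while $(\bs_U)_*Z_i=(\bs_V)_*W_i=X_i$ gives $\bs_U\circ\Phi_U(a,\xi)=\exp(\sum_i\xi_i X_i)(a)=\bs_V\circ f\circ\Phi_U(a,\xi)$.

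\emph{Main obstacle.} The hard part will be the simultaneous choice of the $Z_i$ (projectable and spanning $\ker d_u\bt_U$) and of the $f$-related lifts $W_i$ on $V$. Both rest on the pointwise form of the defining identity: evaluating $\bs^{-1}\CF=\Gamma_c(\ker d\bs)+\Gamma_c(\ker d\bt)$ at a point yields $\ker d\bs+\ker d\bt=(d\bs)^{-1}(F_x)$, whence $(\bs_U)_*(\ker d_u\bt_U)=F_x$; this is what lets one span $\ker d_u\bt_U$ by vertical lifts of elements of $\CF$ together with fields tangent to $\ker d\bs_U\cap\ker d\bt_U$, and dually produce the $W_i$ on $V$. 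This is precisely where the bisubmersion condition — not merely the submersivity of $\bs$ and $\bt$ — is indispensable; the remaining points (shrinking the neighbourhood so the flows are defined, and smoothness of $f$) are routine.
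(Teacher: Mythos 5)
First, a remark on the comparison: the paper does not actually prove this statement — it quotes it from \cite[\S 2.3]{AndrSk} — so your proposal can only be measured against the standard argument there. Your reduction of (ii) to (i) is correct and is the standard one: a morphism pushes a bisection of $V$ through $v$ forward to a bisection of $U$ through $u$ carrying the same diffeomorphism. Your overall strategy for (i) — parametrise $U$ near $u$ by flowing the bisection $\sigma_U$ along $\bt_U$-vertical projectable fields and replay the projected flows in $V$ — is also the right one, and the verification that the resulting $f$ intertwines sources and targets is correct granted the existence of the $Z_i$ and $W_i$. The construction of the $W_i$ (split a projectable $\bs_V$-lift of $X_i$ according to $\bs_V^{-1}\CF=\Gamma_c(\ker d\bs_V)+\Gamma_c(\ker d\bt_V)$ and keep the $\ker d\bt_V$-part) is fine.

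The gap is in the existence of $Z_1,\dots,Z_r$: you need them simultaneously to be sections of $\ker d\bt_U$, to be $\bs_U$-projectable, and to span $\ker d_u\bt_U$, and the justification you offer does not deliver the third property. The pointwise identity $\ker d_u\bs_U+\ker d_u\bt_U=(d_u\bs_U)^{-1}(F_x)$ only shows that the values at $u$ of projectable sections of $\ker d\bt_U$ span $\ker d_u\bt_U$ \emph{modulo} $K_u:=\ker d_u\bs_U\cap\ker d_u\bt_U$; your proposed remedy for the remaining directions, ``fields tangent to $\ker d\bs_U\cap\ker d\bt_U$'', fails because this intersection is not a subbundle and its rank can jump up at $u$, in which case no continuous section of it attains a prescribed generic vector of $K_u$. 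Concretely, for $\CF=C^\infty_c(\RR)\cdot x\frac{\partial}{\partial x}$ and the path holonomy bisubmersion $U\subset\RR^2\times\RR$ built from the redundant generators $x\frac{\partial}{\partial x}$ and $x^2\frac{\partial}{\partial x}$, at $u=(0,0,0)$ one has $\ker d_u\bs=\ker d_u\bt=K_u$ of dimension $2$, while $\ker d\bs\cap\ker d\bt$ has rank $1$ at all nearby points with $x\neq 0$, so sections of the intersection only reach a line in $K_u$. (The spanning statement does still hold there, but only because a projectable section of $\ker d\bt$ projecting onto $-x\frac{\partial}{\partial x}$ — an element of $\CF$ vanishing at the base point — supplies the missing direction; producing such sections in general requires the full module identity $\Gamma_c(\ker d\bs_U)+\Gamma_c(\ker d\bt_U)=\bs_U^{-1}\CF$, not merely its pointwise evaluation.) This spanning statement is essentially the content of proposition \ref{path}(ii), i.e.\ of \cite[Prop.~2.10(b)]{AndrSk}, which is exactly the nontrivial input the cited proof relies on; as written, your argument assumes it rather than proves it.
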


\begin{prop}\label{path} Given $x_0\in M$, let $X_1,\dots ,X_n\in \CF$ be vector fields whose classes in the fibre $\CF_{x_0}$ form a basis. For $v=(v_1,\dots, v_n)\in \RR^n$, put $\varphi_v= exp(\Sigma_i v_i X_i)$, {where $exp$ denotes the time one flow.}
	
Put $W=\RR^n\times M$, $\bs(v,x)=x$ and $\bt(v,x)=\varphi_v(x)$.
\begin{enumerate}
\item[(i)] There is a neighbourhood $U\subset W$ of ${(0,x_0)}$ such that $(U,\bt,\bs)$ is a bisubmersion.
\item[(ii)] Let $(V,\bt_V,\bs_V)$ a bisubmersion and $v_0\in V$. Assume that $s(v_0)=x_0$ and that $V$ carries the identity diffeomorphism at $v_0$. There exists an open neighbourhood $V'$ of $v_0$ on $V$ and a submersion $g:V'\fto U$ which is a morphism of bisubmersions and $g(v_0)=(0,x_0)$. 
\end{enumerate}
\end{prop}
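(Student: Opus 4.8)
The plan is to establish (i) by an explicit description of $\ker d\bt$, and then to deduce the universal property (ii) from the already-stated Corollary \ref{carry} together with a rank computation at $v_0$.

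For \emph{part (i)}, the map $\bs$ is the projection $\RR^n\times M\to M$, hence a submersion, and since $\varphi_0=\mathrm{id}_M$ the map $\bt(0,\cdot)$ is the identity, so $d_{(0,x_0)}\bt$ is already surjective along the $M$-factor; I would restrict $W$ to an open neighbourhood $U$ of $(0,x_0)$ on which $\bt$ is a submersion. Shrinking further, the local generation result recalled above lets me assume $X_1,\dots,X_n$ generate $\CF$ on $\bs(U)$ and on $\bt(U)$. Writing $\widehat{X_j}(v,x):=(0,X_j(x))$ for the horizontal lift, one then has $\Gamma_c(\ker d\bs)=\CI_c(U)\{\partial_{v_1},\dots,\partial_{v_n}\}$ and $\bs^{-1}\CF=\CI_c(U)\{\partial_{v_1},\dots,\partial_{v_n},\widehat{X_1},\dots,\widehat{X_n}\}$. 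The crux is to identify $\ker d\bt$: for fixed $v$ the map $x\mapsto\varphi_v(x)$ is a diffeomorphism, so $\ker d\bt$ is $n$-dimensional and spanned by
\[
Z_i:=\frac{\partial}{\partial v_i}-\sum_{j}a_{ij}\,\widehat{X_j},
\qquad\text{where}\quad
\sum_j a_{ij}X_j=(d\varphi_v)^{-1}\!\left(\tfrac{\partial}{\partial v_i}\varphi_v\right).
\]
The right-hand side is the logarithmic derivative of the family $v\mapsto\varphi_v=\exp(Y_v)$, $Y_v:=\sum_j v_jX_j$, in the direction $\partial_{v_i}$, which the standard formula expresses as $\psi(\mathrm{ad}_{Y_v})(X_i)$ with $\psi(z)=(1-e^{-z})/z$. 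Because $\CF$ is closed under the bracket and locally generated by $X_1,\dots,X_n$, the structure functions $[X_a,X_b]=\sum_c c^c_{ab}X_c$ turn $\mathrm{ad}_{Y_v}$ into a smooth matrix on $\mathrm{span}_{\CI}(X_1,\dots,X_n)$ vanishing at $v=0$; applying the entire function $\psi$ then shows that the $a_{ij}(v,x)$ are smooth with $a_{ij}(0,\cdot)=\delta_{ij}$. Hence $(a_{ij})$ is invertible near $(0,x_0)$ and
\[
\Gamma_c(\ker d\bs)+\Gamma_c(\ker d\bt)=\CI_c(U)\{\partial_{v_i},Z_j\}=\CI_c(U)\{\partial_{v_i},\widehat{X_j}\}=\bs^{-1}\CF .
\]
To obtain the same module for $\bt$, I would use the involution $\tau(v,x)=(-v,\varphi_v(x))$ of $U$ (shrinking $U$ to be $\tau$-invariant): since $\varphi_{-v}=\varphi_v^{-1}$ one checks $\bs\circ\tau=\bt$, $\bt\circ\tau=\bs$, so $\tau$ is a diffeomorphism interchanging the two submersions and fixing $(0,x_0)$, and pulling back the equality just proved gives $\bt^{-1}\CF=\Gamma_c(\ker d\bt)+\Gamma_c(\ker d\bs)$, finishing (i).

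For \emph{part (ii)}, the section $x\mapsto(0,x)$ is a bisection of $U$ through $(0,x_0)$ carrying the identity, and $V$ carries the identity at $v_0$ by hypothesis; applying Corollary \ref{carry}(i) with the roles of $U$ and $V$ exchanged produces an open $V'\ni v_0$ and a morphism $g\colon V'\to U$ with $g(v_0)=(0,x_0)$, so that $\bs\circ g=\bs_V$ and $\bt\circ g=\bt_V$. It then remains to see that $g$ is a submersion, which I would check on differentials at $v_0$. Choosing a bisection $\sigma$ of $V$ through $v_0$ with $\bs_V\circ\sigma=\bt_V\circ\sigma=\mathrm{id}$ gives $T_{v_0}V=\mathrm{im}\,d\sigma\oplus\ker d\bs_V$; from $\bs\circ g=\bs_V$ one gets $dg(\ker d\bs_V)\subseteq\ker d\bs_U$ and that $dg$ carries $\mathrm{im}\,d\sigma$ isomorphically onto a complement of $\ker d\bs_U$. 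Thus surjectivity of $dg_{v_0}$ reduces to the identity $dg(\ker d\bs_V)=\ker d\bs_U$.

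The leaf directions of this last claim are immediate: $d\bt_V$ maps $\ker d\bs_V$ onto $F_{x_0}$, $d\bt_U$ maps $\ker d\bs_U=\RR^n$ onto $F_{x_0}$ (via $\partial_{v_i}\mapsto X_i(x_0)$), and $dg$ intertwines these because $\bt\circ g=\bt_V$, so $dg(\ker d\bs_V)$ surjects onto $F_{x_0}$ modulo the isotropy $\g^{\CF}_{x_0}=\ker d\bs_U\cap\ker d\bt_U$. The hard part will be showing that $dg(\ker d\bs_V)$ contains this isotropy subspace as well. I expect this to be the main obstacle: it rests on the fact that the $\bs$-vertical space of any bisubmersion carries a canonical surjection onto the fibre $\CF_{x_0}$, that for the minimal bisubmersion $U$ this surjection $\partial_{v_i}\mapsto[X_i]$ is an \emph{isomorphism} (the $[X_i]$ being a basis of $\CF_{x_0}$), and that morphisms of bisubmersions intertwine these maps; granting this formalism of \cite[\S 2]{AndrSk}, $dg|_{\ker d\bs_V}$ is identified with the canonical surjection for $V$ followed by the inverse of the isomorphism for $U$, hence is onto, and $g$ is a submersion near $v_0$. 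Together with the smoothness of the coefficients $a_{ij}$ in (i) — where the a priori infinite bracket series must be recast as a matrix-valued smooth function through the structure functions of $\CF$ — this isotropy surjectivity is the genuinely technical input of the whole statement.
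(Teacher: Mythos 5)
The paper itself offers no proof of this statement: it is quoted from \cite[\S 2.3]{AndrSk} (Proposition 2.10 there), so the comparison must be with Androulidakis--Skandalis' argument. Your outline of part (i) follows their strategy (identify $\ker d\bs$, $\bs^{-1}\CF$ and $\ker d\bt$ explicitly, then transport the equality through the flip $\tau(v,x)=(-v,\varphi_v(x))$), but the one step you yourself single out as technical is handled incorrectly: the series $\psi(\mathrm{ad}_{Y_v})(X_i)$ has no reason to converge in the $C^\infty$ setting, and $\mathrm{ad}_{Y_v}$ is not $\CI$-linear on the module spanned by $X_1,\dots,X_n$ (it produces derivative terms on coefficients), so it is not ``a smooth matrix''. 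The correct route replaces the series by the integral formula $(d\varphi_v)^{-1}\bigl(\partial_{v_i}\varphi_v\bigr)=\int_0^1\bigl((\exp(uY_v))^*X_i\bigr)\,du$, uses that $\CF$ is preserved by the flows of its own elements, and then invokes a parametrized version of local finite generation to write the integrand as $\sum_j b_{ij}(u,v,\cdot)X_j$ with smooth coefficients; integrating in $u$ produces the smooth $a_{ij}$ with $a_{ij}(0,\cdot)=\delta_{ij}$. Without this, the existence and smoothness of your $a_{ij}$ is unsupported.

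Part (ii) has two genuine problems. First, deducing the existence of $g$ from Corollary \ref{carry}(i) is circular: in the source that corollary is itself derived from the universal property you are proving (it is Corollary 2.11 of \cite{AndrSk}, a consequence of their Proposition 2.10(b)); within this paper both are merely recalled, so \ref{carry} cannot be taken as logically prior. Second, and more seriously, the crux of the statement is exactly the point you ``grant'': that $dg$ maps $\ker d\bs_V$ \emph{onto} $\ker d\bs_U$, i.e.\ that $dg$ reaches the isotropy directions $\ker d\bs_U\cap\ker d\bt_U\cong\g^{\CF}_{x_0}$ and not merely enough to cover $F_{x_0}$. The canonical surjection $\ker(d\bs_V)_{v_0}\to\CF_{x_0}$ does exist (pass to fibres at $v_0$ in $\bt_V^{-1}\CF=\Gamma_c(\ker d\bs_V)+\Gamma_c(\ker d\bt_V)$ and quotient by the $\bt_V$-vertical part), but the assertion that an arbitrary morphism of bisubmersions intertwines these maps is not formal --- one cannot push a section of $\ker d\bs_V$ forward along $g$ before knowing that $g$ is a submersion --- and establishing it is essentially equivalent to the proposition itself. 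In \cite{AndrSk} the submersion $g=(F,\bs_V)$ is instead \emph{constructed} directly from a bisection carrying the identity together with the decomposition $\bt_V^{-1}\CF=\Gamma_c(\ker d\bs_V)+\Gamma_c(\ker d\bt_V)$, so that the submersion property comes out of the construction rather than being checked afterwards. As it stands, your argument for (ii) reduces the statement to an unproved claim of comparable depth.
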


\begin{defi}
A bisubmersion as in proposition \ref{path}  (i), {when it has $\bs$-connected fibres,} is called {\bf path holonomy bisubmersion}.
\end{defi}

\begin{rem}\label{rem:invertmorph}
{Neighbourhoods of points of the form $(0,x_0)$ in  path holonomy bisubmersions can be embedded in any bisubmersion that carries the identity diffeomorphism. This follows from  {proposition} \ref{path} (ii) and corollary \ref{carry} (ii).}
\end{rem}

\subsubsection{{The holonomy groupoid: construction and properties}}
\label{sec:holconstr}
 
\begin{defi}\label{def:adapted}
Let $\CU=(U_i,\bt_i,\bs_i)_{i\in I}$ be a family of bisubmersions.
\begin{enumerate}
\item[(i)] A bisubmersion $(V,\bt,\bs)$ is said to be {\bf adapted} to $\CU$ at $v_0\in V$ if there exists an open neighbourhood $V'\subset V$ of $v_0$, an element $i\in I$ and a morphism of bisubmersions $V'\fto U_i$. We say that $V$ is adapted to $\CU$ if it is adapted for all point.
\item[(ii)] We say that $\CU$ is an {\bf atlas} if
\begin{itemize}
\item {$\bigcup_{i\in I} {\bs_i}(U_i)= M$},
\item the inverse and the composition of every element in $\CU$ is adapted to $\CU$.
\end{itemize}
\item[(iii)] Let $\CU$ and $\mathcal{V}$ two atlases. We say that $\CU$ is {\bf adapted} to $\mathcal{V}$ if every element in $\CU$ is adapted to $\mathcal{V}$. We say that they are equivalent if they are adapted to each other.
\end{enumerate}
\end{defi}

\begin{prop}
[{\bf Groupoid of an atlas}]
\label{prop:groidatlas} Let $(M,\CF)$ be a foliated manifold and $\CU=(U_i,\bt_i,\bs_i)$ an atlas of bisubmersions for $\CF$.
\begin{enumerate}
\item[(i)] On $\coprod_{i\in I} U_i$ there is an equivalence relation $\sim$ given by: $U_i\ni u\sim v\in U_j$ if there exists a local morphism of bisubmersions from $U_i$ to $U_j$ mapping $u$ to $v$.

Denote $G=G(\CU)$ the quotient by this equivalence relation, and $Q=(q_i)_{i\in I}\colon \sqcup_i U_i \fto G$ the quotient map.
\item[(ii)] There are maps $\bt,\bs\colon G\fto M$ such that $\bs\circ q_i=\bs_i$ and $\bt\circ q_i=\bt_i$.
\item[(iii)] There is a groupoid structure on $G$ with set of objects $M$, source and target maps $\bs$ and $\bt$ defined above and such that $q_i(u)q_j(v)=q_{U_i\circ U_j}(u,v)$.
\end{enumerate}
\end{prop}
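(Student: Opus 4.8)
The plan is to first check that $\sim$ is an equivalence relation (part (i)), then isolate the main technical device — a well-defined ``quotient map'' $q_V\colon V\to G$ attached to every bisubmersion $V$ adapted to $\CU$ — and deduce parts (ii) and (iii) from it almost formally. For part (i), reflexivity is immediate since $\mathrm{id}\colon U_i\to U_i$ is a morphism of bisubmersions. Transitivity follows by composing local morphisms: if $\phi\colon U_i\to U_j$ sends $u$ to $v$ and $\psi\colon U_j\to U_k$ sends $v$ to $w$, then $\psi\circ\phi$ sends $u$ to $w$. Symmetry is the only point needing input: given a local morphism $\phi\colon U_i\to U_j$ with $\phi(u)=v$, Corollary \ref{carry}(ii) produces a local morphism $U_j\to U_i$ sending $v$ back to $u$, so $v\sim u$.

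The heart of the argument is the following construction. For any bisubmersion $V$ adapted to $\CU$, I would define $q_V\colon V\to G$ by $q_V(w):=q_i(f(w))$, where $f\colon V'\to U_i$ is any local morphism near $w$ (which exists by adaptedness). The key step is to show this is independent of the choice of $f$ and of $i$: given two local morphisms $f\colon V'\to U_i$ and $g\colon V''\to U_j$, a bisection $\sigma$ through $w$ in $V$ is carried by $f$ and $g$ to bisections through $f(w)$ and $g(w)$ realising the \emph{same} local diffeomorphism $\bt_V\circ\sigma$ (because morphisms commute with source and target). Hence $U_i$ at $f(w)$ and $U_j$ at $g(w)$ carry a common local diffeomorphism, and Corollary \ref{carry}(i) yields a local morphism $U_i\to U_j$ sending $f(w)$ to $g(w)$, i.e.\ $f(w)\sim g(w)$, so $q_i(f(w))=q_j(g(w))$. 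Applying this with $V=U_i$ and $f=\mathrm{id}$, and using that morphisms commute with $\bs,\bt$, gives $\bs_j(v)=\bs_i(u)$ and $\bt_j(v)=\bt_i(u)$ whenever $u\sim v$; this is exactly the descent needed for part (ii). Moreover $q_V$ is by construction natural: if $h\colon V\to W$ is a morphism of adapted bisubmersions, then $q_W\circ h=q_V$ near each point.

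For part (iii), I would define multiplication by $q_i(u)\cdot q_j(v):=q_{U_i\circ U_j}(u,v)$ whenever $\bs_i(u)=\bt_j(v)$. This is legitimate because $U_i\circ U_j$ is adapted to $\CU$ (compositions of atlas elements are adapted by definition of atlas, and more generally a composition of adapted bisubmersions is adapted, by pushing the two local morphisms into the $U_k$'s and then into the atlas). Well-definedness on $\sim$-classes follows from naturality of $q$ applied to the product morphism $f\times g\colon U_i\circ U_j\to U_{i'}\circ U_{j'}$ induced by morphisms $f,g$ realising $u\sim u'$ and $v\sim v'$. Associativity is inherited from the canonical isomorphism $(U_i\circ U_j)\circ U_k\cong U_i\circ(U_j\circ U_k)$ of bisubmersions together with naturality of $q$. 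For units I would set $1_x:=q_U(0,x)$ using a path holonomy bisubmersion $U$ at $x$ from Proposition \ref{path}, which carries the identity at $(0,x)$; such a $U$ is adapted to $\CU$ because $U_i^{-1}\circ U_i$ carries the identity over $x$ and is adapted, so Remark \ref{rem:invertmorph} embeds a neighbourhood of $(0,x)$ into it. Inverses are given by $q_i(u)^{-1}:=q_{U_i^{-1}}(u)$, using that $U_i^{-1}$ is adapted; the unit and inverse axioms then reduce to exhibiting explicit morphisms identifying $U_i\circ U_i^{-1}$ and $U_i^{-1}\circ U_i$, near the relevant diagonal points, with bisubmersions carrying the identity.

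The main obstacle I expect is precisely the well-definedness of $q_V$ in the second paragraph — that two local morphisms out of an adapted bisubmersion always land in the same $\sim$-class. Once this naturality statement is in place, the verification of the groupoid axioms becomes bookkeeping with inverses and compositions of bisubmersions, using repeatedly that $q$ transforms naturally under morphisms; the only other mild subtlety is confirming that path holonomy bisubmersions are adapted to an \emph{arbitrary} atlas $\CU$, which is handled as above via $U_i^{-1}\circ U_i$.
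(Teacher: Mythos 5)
The paper does not prove this proposition: it is recalled verbatim from Androulidakis--Skandalis \cite{AndrSk}, so there is no in-paper argument to compare against. Your proof is correct and is essentially the standard one from \cite[\S 2.3--3.1]{AndrSk}: the crucial point is exactly the one you isolate, namely that any two local morphisms out of an adapted bisubmersion land in the same $\sim$-class, which you establish correctly by pushing a bisection through both morphisms (this uses the standard fact that a morphism of bisubmersions sends a bisection to a bisection carrying the same diffeomorphism, since $\bt_{U_i}\circ f\circ\sigma=\bt_V\circ\sigma$) and invoking Corollary \ref{carry}(i); the groupoid axioms then follow from naturality of $q$ as you describe.
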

\begin{rem}
{By corollary \ref{carry} (i),} the equivalence relation can be also stated as: $u\sim v$ if and only if there are local bisections through $u$ and $v$ carrying the same diffeomorphism.
\end{rem}

Given a foliated manifold $(M,\CF)$ and an atlas of bisubmersions $\CU=\{U_i : i\in I\}$  for $\CF$, we endow $G(\CU)$ with the quotient topology, i.e.  the smallest topology that makes the quotient map: $Q\colon\sqcup_i U_i \fto G(\CU)$ continuous.

\begin{lem}\label{lem:openmap}
Given a foliated manifold $(M,\CF)$ and  an atlas of bisubmersions $\CU=\{U_i \st i\in I\}$ for $\CF$, the quotient map $Q\colon\sqcup_i U_i \fto G(\CU)$ is open.
\end{lem}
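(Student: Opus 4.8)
The plan is to use the standard characterisation of open quotient maps: since $G(\CU)$ carries the quotient topology and $Q$ is continuous, $Q$ is open if and only if for every open set $\Omega\subset\sqcup_i U_i$ the preimage $Q^{-1}(Q(\Omega))$ is open. First I would identify $Q^{-1}(Q(\Omega))$ with the $\sim$-saturation of $\Omega$, i.e.\ the set of all $v\in\sqcup_j U_j$ that are equivalent to some point of $\Omega$: indeed $Q(\Omega)$ consists of the equivalence classes meeting $\Omega$, so its preimage is exactly the collection of points whose class meets $\Omega$. Thus the whole statement reduces to showing that this saturation is open.

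Next I would fix a point $v$ of the saturation, say $v\in U_j$, equivalent to some $u\in\Omega\cap U_i$, and produce an explicit open neighbourhood of $v$ inside the saturation. Since $\sim$ is an equivalence relation by Proposition \ref{prop:groidatlas}, it is in particular symmetric, so from $u\sim v$ we obtain $v\sim u$; unwinding the definition of $\sim$, this yields a local morphism of bisubmersions $f\colon U_j'\fto U_i$, defined on an open neighbourhood $U_j'$ of $v$, with $f(v)=u$. Concretely, this symmetry is what Corollary \ref{carry}(ii) guarantees, by reversing near $v$ the morphism that witnesses $u\sim v$.

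The key observation is then that $f^{-1}(\Omega)$ is the desired neighbourhood. It is open because $f$ is smooth, hence continuous, and it contains $v$ because $f(v)=u\in\Omega$. Moreover every $v'\in f^{-1}(\Omega)$ lies in the saturation: the map $f$, restricted to a neighbourhood of $v'$, is a local morphism of bisubmersions sending $v'$ to $f(v')\in\Omega$, so $v'\sim f(v')$. Hence $f^{-1}(\Omega)$ is an open neighbourhood of $v$ contained in the saturation, and since $v$ was arbitrary the saturation is open. This gives that $Q^{-1}(Q(\Omega))$ is open for every open $\Omega$, and therefore $Q$ is open.

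I expect the only genuinely delicate point to be the \emph{direction} of the morphism: one must produce a morphism \emph{emanating} from the bisubmersion containing $v$, so that its preimage is a neighbourhood of $v$, rather than one landing in it. This is precisely where the symmetry of $\sim$---and behind it the local reversibility of morphisms from Corollary \ref{carry}(ii)---is essential. It is worth emphasising that no submersion or openness property of $f$ is required here: mere continuity of $f$ suffices, because each point of $f^{-1}(\Omega)$ is automatically $\sim$-equivalent to its image under the local morphism $f$.
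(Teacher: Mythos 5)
Your proposal is correct and follows essentially the same route as the paper: both reduce the claim to openness of the $\sim$-saturation of an open set $\Omega$, produce a local morphism of bisubmersions out of a neighbourhood of a point $v$ in the saturation landing in $\Omega$ (the symmetry supplied by Corollary \ref{carry}(ii)), and observe that every point of the preimage of $\Omega$ under this morphism is equivalent to its image, hence lies in the saturation. Your explicit attention to the direction of the morphism is exactly the point the paper handles by invoking the definition of the equivalence relation from Proposition \ref{prop:groidatlas}.
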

\begin{proof}
Let $\CU$ be an atlas of bisubmersions, we will prove that given an open subset $A$   of $\sqcup_i U_i$, the preimage $Q^{-1}(Q(A))$ is open.

Take $x\in Q^{-1}(Q(A))$. {Denote by $U\in\CU$ the bisubmersion} such that $x\in U$. There exists $y\in A$ such that $Q(x)=Q(y)$. {Notice that $A$ itself is a bisubmersion. By the definition of the equivalence relation in proposition \ref{prop:groidatlas},} 
there is a neighbourhood $U'\subset U$ of $x$ and a  morphism of bisubmersions $f\colon U'\fto {A}$ sending $x$ to $y$. {Hence $Q(U')\subset Q(A)$, or in other words }$U'\subset Q^{-1}(Q(A))$, therefore $x$ is an interior point of $Q^{-1}(Q(A))$.
\end{proof}

\begin{lem}\label{lem:adapt}
If the atlas $\CU_1$ is adapted to $\CU_2$:
\begin{enumerate}
\item[(i)] there is a {canonical injective morphism of topological} groupoids $\varphi\colon G(\CU_1)\fto G(\CU_2)$,
\item[(ii)] $\varphi$ is surjective if and only if $\CU_2$ is adapted to $\CU_1$. {In that case} $\varphi$ is an isomorphism of topological groupoids.
\end{enumerate}
\end{lem}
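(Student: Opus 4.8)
The plan is to construct the morphism $\varphi$ in part (i) directly from the adaptation data, then characterize its surjectivity in part (ii). First I would define $\varphi$ on representatives: given $u \in U_i$ for some bisubmersion $U_i \in \CU_1$, the adaptation hypothesis provides an open neighbourhood $U_i' \subset U_i$ of $u$, an index $j$, and a morphism of bisubmersions $h\colon U_i' \to V_j$ with $V_j \in \CU_2$. I would set $\varphi([u]_{\CU_1}) := [h(u)]_{\CU_2}$, where $[\,\cdot\,]$ denotes the class in the respective groupoid-of-atlas quotient.

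The first thing to check is that this is well defined, i.e.\ independent of the chosen index $j$, the chosen neighbourhood $U_i'$, and the chosen morphism $h$, as well as independent of the representative $u$ within its $\CU_1$-class. The key tool here is Corollary \ref{carry}: if two bisubmersions carry the same local diffeomorphism at the relevant points, then locally one embeds into the other via a morphism. Concretely, if $h\colon U_i' \to V_j$ and $h'\colon U_i'' \to V_{j'}$ are two adaptation morphisms near $u$, then $h(u)$ and $h'(u)$ carry the same diffeomorphism (namely the one carried by $U_i$ at $u$, which morphisms preserve), so by Corollary \ref{carry}(i) there is a local morphism $V_j \to V_{j'}$ sending $h(u)$ to $h'(u)$; by the definition of the equivalence relation in Proposition \ref{prop:groidatlas}, $[h(u)]_{\CU_2} = [h'(u)]_{\CU_2}$. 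The same argument handles representatives $u \sim u'$ in $\CU_1$. That $\varphi$ commutes with $\bs, \bt$ is immediate since adaptation morphisms preserve source and target. That $\varphi$ is a groupoid morphism follows from the compatibility of morphisms with composition of bisubmersions, using that the composition $U_i \circ U_{i'}$ in $\CU_1$ is adapted to $\CU_2$ (part of the atlas axioms) together with Corollary \ref{carry}. Continuity of $\varphi$ follows from the fact that the quotient maps are open (Lemma \ref{lem:openmap}): $\varphi$ is locally the composite of an adaptation morphism with a quotient map, hence continuous, and openness of the source quotient lets this descend.

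For injectivity, suppose $\varphi([u]_{\CU_1}) = \varphi([u']_{\CU_1})$ with $u \in U_i$, $u' \in U_{i'}$. Then the images $h(u) \in V_j$ and $h'(u') \in V_{j'}$ are $\CU_2$-equivalent, so they carry the same local diffeomorphism; since $h,h'$ are morphisms of bisubmersions, $u$ and $u'$ carry that same diffeomorphism as well, whence $[u]_{\CU_1} = [u']_{\CU_1}$ by the remark following Proposition \ref{prop:groidatlas}. For part (ii): if $\CU_2$ is also adapted to $\CU_1$, the symmetric construction yields a morphism $\psi\colon G(\CU_2) \to G(\CU_1)$, and the same carried-diffeomorphism argument shows $\psi \circ \varphi$ and $\varphi \circ \psi$ are the respective identities, so $\varphi$ is an isomorphism of topological groupoids (a continuous open bijective morphism, hence a homeomorphism, using Lemma \ref{lem:openmap} again for openness). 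Conversely, if $\varphi$ is surjective, then every $\CU_2$-class is hit, which means every $V_j \in \CU_2$ admits, near each point, a local morphism into some $U_i \in \CU_1$ compatible with the equivalence; unwinding this gives precisely that each $V_j$ is adapted to $\CU_1$.

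I expect the main obstacle to be the well-definedness in part (i): verifying cleanly that $\varphi$ does not depend on any of the auxiliary choices, since these choices are genuinely local (different neighbourhoods, different target bisubmersions in $\CU_2$). The whole argument rests on repeatedly translating "$\CU_2$-equivalent" into "carries the same diffeomorphism" and back, so the crux is to state the carried-diffeomorphism criterion once and apply it uniformly; everything else (the groupoid-morphism property, continuity, and the equivalence in (ii)) is then a matter of bookkeeping with Corollary \ref{carry} and Lemma \ref{lem:openmap}.
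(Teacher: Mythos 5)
Your construction of $\varphi$ is exactly the one the paper uses: it defines $\widehat{\varphi}\colon \sqcup_{U\in\CU_1}U\to G(\CU_2)$, $u\mapsto [f(u)]$ for an adaptation morphism $f$, and factors it through $Q_1$, with well-definedness, injectivity and part (ii) all resting on corollary \ref{carry} and the carried-diffeomorphism description of the equivalence relation, just as in your argument. The proposal is correct and follows essentially the same route, merely spelling out details the paper leaves implicit.
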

{The map $\varphi$ is induced by morphisms from bisubmersions.
More precisely:} 
 there is a well defined map $\widehat{\varphi}\colon \sqcup_{U\in \CU_1}U \fto G(\CU_2)$, given by   {$u\mapsto [f(u)]$ where $f$ is any  morphism of bisubmersions from a neighbourhood of $u$ to a bisubmersion in $\CU_2$}. This maps factor through the quotient map $Q_1$, {yielding $\varphi$.}
 \begin{equation} \label{diag:cont1}
\begin{tikzcd}\sqcup_{U\in \CU_1} U \arrow[d,"Q_1"] \arrow[dr,"\widehat{\varphi}"] & \\
G(\CU_1) \arrow[r,"\varphi"]   & G(\CU_2).
\end{tikzcd}
\end{equation}
 
\begin{defi}\label{def:pathholatlas}
Let $(M,\CF)$ be a foliated manifold. {A {\bf path holonomy atlas} is an atlas generated by   a family of path holonomy bisubmersions ${(U_i,\bt_i,\bs_i)}_{i\in I}$ such that $\cup\bs_i(U_i)=M$}. 
\end{defi}

{The following lemma implies easily corollary \ref{cor:phatlas} and corollary \ref{cor:adapted}, which together are the content of  \cite[Examples 3.4(3)]{AndrSk}.} 

\begin{lem}\label{lem:smallelements} {Let $(M,\cF)$ be a foliated manifold.
Let $U\subset \RR^n\times M$ be a path holonomy bisubmersion, and $\CV$ an atlas of bisubmersions for $\cF$. Then $U$ is adapted to $\CV$.}
\end{lem}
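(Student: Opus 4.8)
The plan is to verify the defining condition of ``adapted'' pointwise: for each $v_0\in U$ I must produce an open neighbourhood $V'\subset U$ of $v_0$ together with a morphism of bisubmersions from $V'$ into some element of $\CV$. Writing $U\subset\RR^n\times M$ as the path holonomy bisubmersion built from $X_1,\dots,X_n\in\cF$ (whose classes form a basis of $\cF_{x_0}$), the diffeomorphism carried by $U$ at $v_0=(a,y)$ is $\varphi_a=\exp(\sum_i a_iX_i)$. I will use two structural observations throughout. First, ``adapted to $\CV$'' is transitive along morphisms: if $V'$ admits a morphism into a bisubmersion $W$ that is itself adapted to $\CV$, then $V'$ is adapted to $\CV$. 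Second, a composition of two bisubmersions, each adapted to $\CV$, is again adapted to $\CV$; this follows by taking local morphisms of the two factors into elements $V_i,V_j$ of $\CV$, composing them into a morphism into $V_i\circ V_j$, and invoking the atlas axiom (Def. \ref{def:adapted}(ii)) that $V_i\circ V_j$ is adapted to $\CV$.

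The first step is the identity case, i.e.\ points of the form $(0,y)$. Given $y\in\bs(U)$, choose $V\in\CV$ with a point $w$ over $y$, possible since $\bigcup_i\bs_i(V_i)=M$. The composition $V^{-1}\circ V$ carries the identity diffeomorphism near $y$ at the diagonal point $(w,w)$; since $V^{-1}$ is adapted to $\CV$ (being the inverse of an atlas element) and $V$ is itself an element of $\CV$, the second observation above shows $V^{-1}\circ V$ is adapted to $\CV$. By Remark \ref{rem:invertmorph} a neighbourhood of $(0,y)$ in $U$ embeds, via a morphism of bisubmersions, into $V^{-1}\circ V$; hence $U$ is adapted to $\CV$ at $(0,y)$. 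Since ``adapted at a point'' is an open condition, $U$ is in fact adapted to $\CV$ on an open neighbourhood of $\{(0,z):z\in\bs(U)\}$ in $U$.

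The second step reduces a general point $v_0=(a,y)$ to the identity case by decomposing the carried flow. Set $X:=\sum_i a_iX_i\in\cF$ and note $\varphi_a=\exp(X)=\psi^{\circ N}$ with $\psi:=\exp(\tfrac1N X)=\varphi_{a/N}$; since $\psi$ is again the time-one flow of a constant-coefficient combination of the $X_i$, it is precisely the diffeomorphism carried by $U$ at the points $(a/N,z)$. Following the flow line $z_j:=\exp(\tfrac jN X)(y)$ from $y=z_0$ to $\varphi_a(y)=z_N$, I form the $N$-fold composition $U\circ\cdots\circ U$ near the point determined by the $u_j:=(a/N,z_{j-1})$: it is a bisubmersion carrying $\psi^{\circ N}=\varphi_a$ near $y$. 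For $N$ large each $u_j$ is $\RR^n$-close to the zero section, hence lies in the adapted locus of Step~1, so each factor is adapted to $\CV$; by the second observation the whole composition is adapted to $\CV$. Finally both $U$ at $v_0$ and this composition carry $\varphi_a$ and have source $y$, so Corollary \ref{carry}(i) supplies a morphism from a neighbourhood of $v_0$ in $U$ into the composition, and transitivity gives that $U$ is adapted to $\CV$ at $v_0$.

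The main obstacle is the domain bookkeeping in Step~2: I must guarantee that every intermediate point $u_j=(a/N,z_{j-1})$ actually lies in the region of $U$ where adaptation was established, which requires the whole (compact) flow line $\{z_j\}$ to stay inside $\bs(U)$ with $(0,z_{j-1})\in U$. When the flow line leaves $\bs(U)$ this forces me to replace some factors $U$ by other path holonomy bisubmersions built from $X_1,\dots,X_n$ around the points $z_j$, covering the compact flow line by finitely many of them; each such factor is again adapted to $\CV$ near its identity points by Step~1, and their composite still carries $\varphi_a$. Checking that these local factors can be chosen to carry the successive pieces of the constant-coefficient flow $\exp(\tfrac tN X)$, and that the fibrewise $\bs$-connectedness of path holonomy bisubmersions lets one connect $(0,y)$ to $(a,y)$ in a controlled way, is the technical heart of the argument; everything else is a direct application of Proposition \ref{path}, Corollary \ref{carry} and Remark \ref{rem:invertmorph}.
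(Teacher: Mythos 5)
Your overall route is the same as the paper's: you handle the identity points via Remark \ref{rem:invertmorph} (your $V^{-1}\circ V$ device is exactly what the paper's terse ``apply Remark \ref{rem:invertmorph}'' presupposes), then treat a general point $(a,y)$ by factoring $\varphi_a=\exp(X)$ into $N$ small flows, composing, using compactness of the flow line to get a uniform $N$, and comparing with $U$ at $(a,y)$ via Corollary \ref{carry}(i). The gap is precisely the step you defer as ``the technical heart'', and it is not mere bookkeeping. Your proposed intermediate factors --- $U$ itself, or ``path holonomy bisubmersions built from $X_1,\dots,X_n$ around the points $z_j$'' --- need not exist: the $X_i$ are only guaranteed to generate $\cF$ near $x_0$, so Proposition \ref{path} does not apply to them at the points $z_j=\exp(\tfrac jN X)(y)$, which may lie far outside $\bs(U)$. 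And even after replacing them by some generating set of $\cF$ near $z_j$, you would still need that generating set to contain $X$ (or a constant-coefficient combination equal to $X$): otherwise no \emph{constant} bisection of the new factor carries $\exp(\tfrac 1N X)$, and your $N$-fold composite no longer visibly carries $\varphi_a$, so Corollary \ref{carry}(i) cannot be invoked.

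The missing idea, which is the actual content of the paper's proof, is to transport the generators along the flow: for $h\in[0,1]$ set $X_i^h:=\exp(hX)_*X_i$. Since $\exp(hX)$ is an automorphism of $(M,\cF)$, the $X_i^h$ generate $\cF$ near $\gamma(h)=\exp(hX)(y)$ and hence define a path holonomy bisubmersion $U_h$ there; and since a vector field is invariant under its own flow, $\sum_i a_iX_i^h=\exp(hX)_*X=X$, so the constant bisection of $U_h$ at parameter $a/N$ carries $\exp(\tfrac1N X)$ near $\gamma(h)$. With these $U_h$ as the factors, your Step~1, the compactness argument, and Corollary \ref{carry}(i) complete the proof exactly as in the paper. (The paper also first normalises $a$ to $(1,0,\dots,0)$ by a linear change of generators; that is a convenience only, and your formulation with general $a$ works once the pushforward is in place.)
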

\begin{proof} We have to show that around any point of $U$ there is a locally defined morphism of bisubmersions to an element of $\CV$, see definition \ref{def:adapted}.

Let $u=(v,x)\in U$. {If $v=0$ we can simply apply Remark \ref{rem:invertmorph}, so in the following we assume $v\neq 0$.}
Denote by $X_1,\dots,X_n$ the vector fields in $\cF$ used to construct the path holonomy bisubmersion $U$.  Extend $v$ to a basis $v^1:=v, v^2,\dots,v^n$ of $\RR^n$, and consider the path holonomy bisubmersion $\tilde{U}$ given by the local generators $\sum_i v_i^1X_i,\dots, \sum_i v_i^nX_i$ of $\cF$. The points $u\in U$ and $((1,0,\dots,0),x)\in \tilde{U}$ are equivalent by {corollary \ref{carry}(i)}, since the constant bisections through them carry the same diffeomorphism. Hence in the rest of the proof we can assume that $v=(1,0,\dots,0)$.

Since $X_1$ is compactly supported and hence complete, we can consider the path $\gamma\colon [0,1]\to M, \gamma(h)=exp_x(hX_1)$. For every $h\in [0,1]$, apply the diffeomorphism $exp(hX_1)$ to $X_1,\dots,X_n$. This yields    elements of $\cF$, the first one being $X_1$, which form a generating set for $\cF$ near $\gamma(h)$. Denote by $U_h$  the path holonomy bisubmersion  they give rise to. 

By remark \ref{rem:invertmorph} there exists an open neighborhood $U_h'$ of $(0,\gamma(h))$ and a morphism of bisubmersions from $U_h'$ to a bisubmersion in $\CV$.
Shrinking $U_h'$ if necessary, we can assume that it is of the form ${B_{{r_h}}}\times M'_h$ where  $B_{{r_h}}\subset \RR^n$ is the open ball with radius $r_h$ and $M'_h\subset M$.

By the compactness of $[0,1]$, there are finitely many $h_1,\dots,h_k\in [0,1]$ such that $M'_{h_1},\dots,M'_{h_k}$ cover the image of $\gamma$. Hence
there is a positive integer $N$ such that, for all $h\in [0,1]$, the point $(\frac{1}{N}v,\gamma(h))$ is contained in one of the $U_{h_i}'$.
The composition 
\begin{equation}\label{eq:comp}
  \left(\frac{v}{N}, \gamma\Big(\frac{N-1}{N}\Big)\right)\circ\dots\circ \left(\frac{v}{N}, \gamma\Big(\frac{1}{N}\Big)\right)\circ\left(\frac{v}{N},x\right)
\end{equation} is well-defined\footnote{For instance, $\bt((\frac{v}{N},x))=
exp_x(\frac{1}{N}X_1)=\gamma(\frac{1}{N})$.}.
 Further, it is equivalent to $u=(v,x)\in U$ since the constant bisections through {
$u$ and through the composition \eqref{eq:comp}} 
 both carry the  diffeomorphism  $exp(X_1)$. Since each of the elements we are composing in \eqref{eq:comp} lies in the domain of a morphism of bisubmersions  to a bisubmersion in $\CV$, the composition also does.
\end{proof}

\begin{cor}\label{cor:phatlas}
\begin{enumerate}
\item [(i)] A {path holonomy  atlas} is adapted to any  atlas. 
\item [(ii)] {Any path holonomy atlas defines the same topological groupoid.} 
\end{enumerate}
\end{cor}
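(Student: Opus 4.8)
The plan is to deduce both statements from Lemma \ref{lem:smallelements}, which already handles a single path holonomy bisubmersion, together with two elementary stability properties of the relation ``adapted''. First I would record that being adapted is \emph{transitive}: if a bisubmersion $U$ is adapted to a family $\mathcal{W}$ and every element of $\mathcal{W}$ is adapted to an atlas $\CV$, then $U$ is adapted to $\CV$, since one simply composes a local morphism $U'\to W$ (with $W\in\mathcal{W}$) with a local morphism $W'\to V$ (with $V\in\CV$).

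For part (i), I would fix an arbitrary atlas $\CV$. Recalling that the path holonomy atlas is generated, via inverses and compositions, by path holonomy bisubmersions (Definition \ref{def:pathholatlas}), it suffices to show that the class of bisubmersions adapted to $\CV$ contains every path holonomy bisubmersion and is closed under inverse and composition. The first point is exactly Lemma \ref{lem:smallelements}. For closure under inverse, observe that a morphism $f\colon U'\to V_i$ with $V_i\in\CV$ is simultaneously a morphism $(U')^{-1}\to V_i^{-1}$; since $\CV$ is an atlas, $V_i^{-1}$ is adapted to $\CV$ by the atlas axiom of Definition \ref{def:adapted}(ii), so transitivity gives that $U^{-1}$ is adapted to $\CV$. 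For closure under composition, a pair of morphisms $f_1\colon U_1'\to V_i$ and $f_2\colon U_2'\to V_j$ induces a morphism $U_1\circ U_2 \to V_i\circ V_j$ on the fibre product, since it commutes with the outer source and target maps by construction; as $V_i\circ V_j$ is adapted to $\CV$ by the atlas axiom, transitivity again yields that $U_1\circ U_2$ is adapted to $\CV$. Iterating these two closure properties from the base case shows that every element of the path holonomy atlas is adapted to $\CV$, that is, the path holonomy atlas is adapted to $\CV$.

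Part (ii) is then immediate. Given two path holonomy atlases $\CU_1$ and $\CU_2$, applying part (i) twice shows that $\CU_1$ is adapted to $\CU_2$ and $\CU_2$ is adapted to $\CU_1$; hence they are equivalent in the sense of Definition \ref{def:adapted}(iii). By Lemma \ref{lem:adapt}(ii) the canonical morphism $\varphi\colon G(\CU_1)\to G(\CU_2)$ is an isomorphism of topological groupoids, so any two path holonomy atlases define the same topological groupoid.

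I do not expect a genuine obstacle here, since the substantive analytic work was carried out in Lemma \ref{lem:smallelements} and what remains is essentially bookkeeping. The only points deserving care are checking that the natural maps on the inverse and composition bisubmersions are honest morphisms of bisubmersions — that they respect source and target, and that the relevant neighbourhoods can be shrunk compatibly so that the fibre-product morphism is well defined — and invoking the atlas axioms for $\CV$ at precisely the step where $V_i^{-1}$ and $V_i\circ V_j$ are declared adapted to $\CV$. It would also be prudent to pin down the precise meaning of ``generated'' in Definition \ref{def:pathholatlas}, so as to confirm that the generating path holonomy bisubmersions, together with their iterated inverses and compositions, indeed exhaust (up to adaptation) the path holonomy atlas, so that the closure argument above covers all of its elements.
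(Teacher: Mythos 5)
Your proof is correct and follows essentially the same route as the paper: the paper's (much terser) argument also reduces part (i) to Lemma \ref{lem:smallelements} by observing that a path holonomy atlas is built from path holonomy bisubmersions via composition and inversion, and deduces (ii) by applying (i) twice together with Lemma \ref{lem:adapt}(ii). Your explicit verification that adaptedness is transitive and stable under inverse and composition is exactly the bookkeeping the paper leaves implicit.
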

\begin{proof}
{(i) 
A path holonomy atlas consists of  finite compositions of path holonomy bisubmersions. Hence the statement follows from lemma \ref{lem:smallelements}.}

(ii)  {An immediate consequence of part (i) is that any pair of path holonomy atlases are adapted to each other, therefore they define the same  topological groupoid.}
\end{proof}

\begin{defi}\label{def:holgroidph}
Let $(M,\CF)$ be a foliated manifold. The groupoid over $M$ associated to a path holonomy atlas is called {\bf holonomy groupoid} and  is denoted $H(\CF)$.
\end{defi}

\begin{cor}\label{cor:adapted}
There exists a {canonical} injective {morphism of topological groupoids}
\[\varphi\colon H(\CF)\fto G\]
where $G$ is any groupoid given by an atlas of bisubmersions for $\CF$.
\end{cor}
\begin{proof}
This follows from   corollary \ref{cor:phatlas} (i)  {applied to the atlas defining $G$, and from   {lemma \ref{lem:adapt} (i)}.}
\end{proof}

\begin{lem}\label{lem:sconnholgroupid}
The holonomy groupoid is source connected.
\end{lem}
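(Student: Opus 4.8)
\section*{Proof proposal}

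The plan is to show that for every $x\in M$ the source fibre $\bs^{-1}(x)$ of $H(\CF)$ is (path-)connected, by producing, for each arrow $g$ with $\bs(g)=x$, a continuous path inside $\bs^{-1}(x)$ joining $g$ to the unit $1_x:=Q(0,x)$. Since $H(\CF)$ comes from a path holonomy atlas, every such $g$ can be written as a finite product of classes of points in path holonomy bisubmersions,
\[
g=q_{U_1}(v_1,x_1)\cdots q_{U_k}(v_k,x_k),\qquad x_k=x,\quad x_{i}=\varphi_{v_{i+1}}(x_{i+1}),
\]
where each $U_i\subset\RR^{n_i}\times M$ is a path holonomy bisubmersion with $\bs(v_i,x_i)=x_i$ and $\bt(v_i,x_i)=\varphi_{v_i}(x_i)$. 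Using lemma \ref{lem:smallelements} (concretely, the decomposition \eqref{eq:comp} into small steps $(\tfrac{v}{N},\gamma(\cdot))$) we may assume, after refining the product, that each parameter $v_i$ is so small that $(v_i,x_i)$ lies in a path holonomy bisubmersion based at $x_i$ of the form (a ball about $0$ in $\RR^{n_i}$) $\times$ (a neighbourhood of $x_i$), hence star-shaped in the $\RR^{n_i}$-direction.

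Two ingredients then suffice. First, for a single factor the scaling path $t\mapsto q_{U_i}(tv_i,x_i)$, $t\in[0,1]$, stays inside $\bs^{-1}(x_i)$ (its source is the fixed point $x_i$), is continuous because $t\mapsto(tv_i,x_i)$ stays in the star-shaped $U_i$ and $Q$ is continuous, and joins $q_{U_i}(v_i,x_i)$ to $1_{x_i}$. Second, let $C_y\subseteq\bs^{-1}(y)$ denote the path component of $1_y$; I claim that if $g'\in C_y$ and $(v,x)$ is a point in a (small, star-shaped) path holonomy bisubmersion $U$ with $\varphi_v(x)=y$, then $g'\cdot q_U(v,x)\in C_x$. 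Indeed, choose a path $s\mapsto g'_s$ in $\bs^{-1}(y)$ from $g'$ to $1_y$; since $\bs(g'_s)=y=\bt(q_U(v,x))$, the products $g'_s\cdot q_U(v,x)$ are all defined, lie in $\bs^{-1}(x)$, and depend continuously on $s$ because multiplication in the topological groupoid $H(\CF)$ is continuous. This joins $g'\cdot q_U(v,x)$ to $1_y\cdot q_U(v,x)=q_U(v,x)$, which the first ingredient joins to $1_x$; hence $g'\cdot q_U(v,x)\in C_x$.

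Now argue by induction on the number $k$ of factors. For $k=0$ the arrow is $1_x\in C_x$. In general write $g=g'\cdot q_{U_k}(v_k,x_k)$ with $g'=q_{U_1}(v_1,x_1)\cdots q_{U_{k-1}}(v_{k-1},x_{k-1})\in\bs^{-1}(x_{k-1})$; since $x_{k-1}=\varphi_{v_k}(x_k)=\bt(q_{U_k}(v_k,x_k))$, the inductive hypothesis gives $g'\in C_{x_{k-1}}$, and the claim yields $g\in C_{x_k}=C_x$. Thus $\bs^{-1}(x)=C_x$ is path connected, proving that $H(\CF)$ is source connected. I expect the only genuinely delicate point to be the continuity issue hidden in the naive ``scale everything down simultaneously'' approach, where the moving base points $x_i(t)$ force the path out of any fixed bisubmersion; the argument above sidesteps this by keeping the source of each path fixed and invoking only continuity of multiplication together with the small-step reduction of lemma \ref{lem:smallelements}, so that every scaling path genuinely remains inside a single star-shaped bisubmersion.
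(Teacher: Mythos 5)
Your argument is correct and takes essentially the same route as the paper's proof: both induct on the number of path-holonomy factors of an arrow and contract one factor to the unit along a connected source fibre of its path holonomy bisubmersion. The only differences are cosmetic --- the paper contracts the outermost factor upstairs, inside the composite bisubmersion $U_k\circ\dots\circ U_1$, using only continuity of the quotient map $Q$, whereas you contract the partial product downstairs in $H(\CF)$ via continuity of multiplication; moreover your detour through lemma \ref{lem:smallelements} and star-shapedness is not needed, since path holonomy bisubmersions have connected source fibres by definition and one only has to arrange that $(0,x)$ lies in the bisubmersion, as the paper does.
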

\begin{proof}

{Take a family of path holonomy bisubmersions $\CU:=\{U_i\}_{i\in I}$ such that $\{{\bs_i}(U_i)\}_{i\in I}$ covers $M$ and $\{0\}\times {\bs_i}(U_i)\subset U_i$. ({It exists by   lemma \ref{path} (i)}.) Taking finite compositions and inverses of these elements we {obtain} a path holonomy atlas $\widehat{\CU}$. Denote by $Q\colon \sqcup_{U\in \widehat{\CU}} U\fto H(\CF)$ the {(surjective)} quotient map.}

{ Note that for any $U\in\CU$, any point of $Q(U)$ \emph{can be connected to the identity through {a continuous path in} an $\bs$-fiber of $H(\cF)$}. Now we  prove {the same statement for any point of} $Q(U_k\circ\dots\circ U_1)$, where {$k\ge 2$} and $U_k,\dots ,U_1\in \CU$.}

By induction, suppose {that the statement holds for all points of $Q(U_{k-1}\circ\dots\circ U_1)$}.
Take  ${u:=}u_k\times\dots \times u_1 \in U_k\circ \dots \circ U_{1}$, {and denote $p:=\bs(u_k)\in M$.}
Because $U_k$ is source connected   there exists a curve $\gamma(t)$ in {a source fibre of} $U_k$ 
{joining $u_k$ with $(0,p)$.} 
Then $Q(\gamma(t)\times\dots \times u_1)$
{is a curve in an $\bs$-fibre of $H(\cF)$
that connects $Q(u)$ with an element of $Q(U_{k-1}\circ\dots\circ U_1)$. Hence the statement holds of $Q(u)$.}
\end{proof}

\subsection{Morita equivalent holonomy groupoids: the  case of projective foliations}

In this short subsection we show that, under certain assumptions, Morita equivalence of holonomy groupoids implies {Hausdorff} Morita equivalence of singular foliations. The converse implication will be addressed in  {\S \ref{subsec:hol3}}. {In the case of regular foliations, these results are in  agreement with   Haefliger's approach to Morita equivalence \cite[\S 1.5]{HaefligerHolonomieClassifiants}.}

We saw in \S \ref{subsec:firstinv} that projective foliations have an associated almost injective Lie algebroid. The latter is always integrable {\cite{DebordJDG} to a Lie groupoid. Indeed},   a singular foliation is projective if{f} its holonomy groupoid is a Lie groupoid \cite{AZ5}.  

\begin{prop}\label{prop:MEholtoMEfol}
{Let $(M,\cF_M)$ and $(N,\cF_N)$ be projective singular foliations. If the holonomy groupoids $H(\cF_M)$ and $H(\cF_N)$ are} {Hausdorff}
 and are Morita equivalent, then the two singular foliations are Hausdorff Morita equivalent.
\end{prop}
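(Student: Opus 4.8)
The plan is to show that the manifold and submersions witnessing the groupoid Morita equivalence can be upgraded, essentially verbatim, into a Hausdorff Morita equivalence of the two foliations. First I would unwind what projectivity gives us. By \cite{AZ5} a singular foliation is projective if and only if its holonomy groupoid is a Lie groupoid, so $H(\cF_M)$ and $H(\cF_N)$ are Lie groupoids; moreover they integrate the almost injective Lie algebroids $A_M$ and $A_N$ associated to $\cF_M$ and $\cF_N$ (see \cite{DebordJDG}), and by construction $\cF_M=\#(\Gamma_c(A_M))$ and $\cF_N=\#(\Gamma_c(A_N))$. By hypothesis these groupoids are Hausdorff and Morita equivalent, and by Lemma \ref{lem:sconnholgroupid} they are source connected.

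Once this is in place, the statement is exactly the situation of Proposition \ref{prop: implications}: take the source connected Hausdorff Lie groupoids $G:=H(\cF_M)$ and $H:=H(\cF_N)$, whose underlying singular foliations are precisely $\cF_M$ and $\cF_N$. Their Morita equivalence is hypothesis (i) of that proposition, so the implication (i)$\Rightarrow$(iii) yields $(M,\cF_M)\simeq_{ME}(N,\cF_N)$, which is what we want. If one prefers to spell out the argument rather than cite Proposition \ref{prop: implications}, the steps are as follows: using Corollary \ref{cor:SConnMEGrpd2} produce a groupoid Morita equivalence $(P,\pi_M,\pi_N)$ with \emph{connected} $\pi$-fibres realizing an isomorphism $\pi_M^{-1}H(\cF_M)\cong\pi_N^{-1}H(\cF_N)$; differentiate this isomorphism, using Lemma \ref{GtoA}(i) to identify the Lie algebroids of the pullback groupoids with $\pi_M^{-1}A_M$ and $\pi_N^{-1}A_N$, to obtain a Lie algebroid isomorphism $\pi_M^{-1}A_M\cong\pi_N^{-1}A_N$ over $P$; since any Lie algebroid isomorphism intertwines the anchors and is a bijection on compactly supported sections, the two anchor images coincide as submodules of $\CX_c(P)$, and Lemma \ref{lem:pullbackalgfol} identifies these images with $\pi_M^{-1}\cF_M$ and $\pi_N^{-1}\cF_N$ respectively. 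Hence $\pi_M^{-1}\cF_M=\pi_N^{-1}\cF_N$, and $(P,\pi_M,\pi_N)$ is the desired Hausdorff Morita equivalence.

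The main obstacle is purely the matter of connectedness of fibres. Definition \ref{def:MEgroid} only furnishes surjective submersions $\pi_M,\pi_N$, whereas Definition \ref{def:defMEfol} demands connected fibres; without controlling this, the algebroid computation above would still go through but would not produce an admissible equivalence of foliations. This is precisely where source connectedness of the holonomy groupoids (Lemma \ref{lem:sconnholgroupid}) is indispensable: it is the input that allows Corollary \ref{cor:SConnMEGrpd2} to replace an arbitrary groupoid Morita equivalence by one whose $\pi$-fibres are connected. The remaining passages---differentiating the groupoid isomorphism and comparing anchor images---are routine given Lemmas \ref{GtoA} and \ref{lem:pullbackalgfol}.
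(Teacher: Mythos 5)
Your proof is correct and follows essentially the same route as the paper's: the paper likewise observes that projectivity and the Hausdorff hypothesis make $H(\cF_M)$ and $H(\cF_N)$ source connected Hausdorff Lie groupoids whose underlying foliations are $\cF_M$ and $\cF_N$, and then invokes Proposition \ref{prop: implications}. Your additional unwinding of that proposition (via Corollary \ref{cor:SConnMEGrpd2}, Lemma \ref{GtoA}(i) and Lemma \ref{lem:pullbackalgfol}) is accurate but not needed beyond the citation.
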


\begin{proof}
The hypothesis on $\cF_M$ assures that $H(\cF_M)$ is a source connected {Hausdorff} Lie groupoid, whose underlying foliation is $\cF_M$, and similarly for $\cF_N$. Hence one can apply {proposition \ref{prop: implications}}. 
\end{proof}

\begin{rem}
We do not know if this statement holds for arbitrary singular foliations. To establish such a result, one first needs to describe precisely how a singular foliation can be recovered from its holonomy groupoid. To do this, the holonomy groupoid has to be viewed as a diffeological groupoid rather than just as a topological one.
This  issue   is being addressed in \cite{AZ4}.
\end{rem}

\subsection{Pullbacks of foliations and their holonomy groupoids}\label{sec:hol2}

In this subsection we show that, for suitable maps, 
the holonomy groupoid of the pullback foliation is the pullback of the 
holonomy groupoid of the foliation. {From this in \S \ref{subsec:hol3} we will easily derive the main result of the paper, namely theorem. \ref{thm:MEfolgroids}.}

\subsubsection{An isomorphism of {topological} groupoids}

We prove the following isomorphism of topological groupoids:
\begin{thm}\label{thm:pullbackgroid}
Let $(M,\CF)$ be a foliated manifold and $\pi\colon P\fto M$ a surjective submersion {with connected fibres}.
Then there is a canonical isomorphism of topological groupoids $$H(\pi^{-1}(\CF))\cong \pi^{-1}(H(\CF)).$$
\end{thm}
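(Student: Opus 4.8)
The plan is to lift the entire bisubmersion-theoretic construction of the holonomy groupoid along $\pi$. For a bisubmersion $(U,\bt_U,\bs_U)$ of $(M,\CF)$ I set
\[
\pi^{-1}(U):=P {}_\pi\!\times_{\bt_U} U {}_{\bs_U}\!\times_\pi P,
\]
with target and source the outer projections $\tilde{\bt}(q,u,p)=q$ and $\tilde{\bs}(q,u,p)=p$, exactly as in Def. \ref{def:pullbackgroid} applied levelwise. The first task is to verify that $\pi^{-1}(U)$ is a bisubmersion for $\pi^{-1}(\CF)$. Writing $\sigma\colon\pi^{-1}(U)\to U$ for the middle projection one has $\pi\circ\tilde{\bs}=\bs_U\circ\sigma$, so by functoriality of the pullback of foliations $\tilde{\bs}^{-1}(\pi^{-1}\CF)=\sigma^{-1}(\bs_U^{-1}\CF)$; since $U$ is a bisubmersion this equals $\sigma^{-1}(\Gamma_c(\ker d\bs_U)+\Gamma_c(\ker d\bt_U))$, and a direct computation (in the spirit of lemma \ref{lem:pullbackalgfol}) identifies the latter with $\Gamma_c(\ker d\tilde{\bs})+\Gamma_c(\ker d\tilde{\bt})$, symmetrically for $\tilde{\bt}$. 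I then record the evident functoriality $\pi^{-1}(U^{-1})=\pi^{-1}(U)^{-1}$, the canonical identification $\pi^{-1}(U\circ V)\cong\pi^{-1}(U)\circ\pi^{-1}(V)$ obtained by reshuffling fibre products, and the fact that a morphism $f$ induces the morphism $(q,u,p)\mapsto(q,f(u),p)$. Consequently, if $\CU$ is a path holonomy atlas for $\CF$ then $\pi^{-1}(\CU):=\{\pi^{-1}(U):U\in\CU\}$ is an atlas for $\pi^{-1}(\CF)$, the covering condition holding because $\tilde{\bs}(\pi^{-1}(U))=\pi^{-1}(\bs_U(U))$ and $\pi$ is surjective.

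The second step identifies $G(\pi^{-1}(\CU))$ with the pullback groupoid $\pi^{-1}(G(\CU))=\pi^{-1}(H(\CF))$. Writing $Q_U\colon U\to G(\CU)$ for the quotient maps, I define
\[
\Phi\colon\bigsqcup_{U\in\CU}\pi^{-1}(U)\fto\pi^{-1}(H(\CF)),\qquad (q,u,p)\mapsto(q,Q_U(u),p),
\]
which lands in the fibre product since $\bt(Q_U(u))=\bt_U(u)=\pi(q)$ and similarly on the source side, and which is clearly surjective. For the induced map $\bar\Phi$ on the quotient to be injective, observe that $Q_U(u)=Q_V(u')$ provides a local morphism $f$ with $f(u)=u'$, whose pullback $\pi^{-1}(f)$ identifies $(q,u,p)$ with $(q,u',p)$. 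The crux is well-definedness, i.e. that a local morphism $F\colon\pi^{-1}(U)\to\pi^{-1}(V)$ forces $q,p$ to be preserved (immediate, since $F$ commutes with $\tilde{\bs},\tilde{\bt}$) and $Q_U(u)=Q_V(u')$. For this I will prove the key lemma that the diffeomorphisms of $P$ carried by $\pi^{-1}(U)$ are precisely the $\pi$-lifts $\psi$ (satisfying $\pi\circ\psi=\phi\circ\pi$) of the diffeomorphisms $\phi$ of $M$ carried by $U$; since $F$ forces the carried diffeomorphisms of $P$ at $(q,u,p)$ and $(q,u',p)$ to coincide, their $\pi$-projections coincide, and the bisection characterisation of the equivalence relation (the remark following proposition \ref{prop:groidatlas}, via corollary \ref{carry}(i)) yields $Q_U(u)=Q_V(u')$. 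This produces a bijection $\bar\Phi\colon G(\pi^{-1}(\CU))\to\pi^{-1}(H(\CF))$.

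That $\bar\Phi$ is an isomorphism of topological groupoids then follows formally: it intertwines $\tilde{\bs},\tilde{\bt}$ with the two projections and is multiplicative through $\pi^{-1}(U\circ V)\cong\pi^{-1}(U)\circ\pi^{-1}(V)$, and it is a homeomorphism because $\Phi$ is essentially $\mathrm{id}_P\times Q\times\mathrm{id}_P$ with $Q$ open by lemma \ref{lem:openmap}, hence open and continuous for the quotient and fibre-product topologies. It remains to replace $G(\pi^{-1}(\CU))$ by $H(\pi^{-1}(\CF))$, i.e. to show that $\pi^{-1}(\CU)$ is equivalent to a path holonomy atlas $\mathcal W$ of $\pi^{-1}(\CF)$; lemma \ref{lem:adapt} then upgrades $\bar\Phi$ to the asserted canonical isomorphism. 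One direction, that $\mathcal W$ is adapted to $\pi^{-1}(\CU)$, is corollary \ref{cor:phatlas}(i). The reverse direction — that each $\pi^{-1}(U)$ is adapted to $\mathcal W$ — is the step I expect to be the main obstacle: although $\pi^{-1}(U)$ has source fibres of the minimal dimension $\dim(\pi^{-1}\CF)_p$ (and, because $\pi$ has connected fibres, these are even connected by lemma \ref{lem:subcon}), it is not literally of path holonomy form, so I plan to argue pointwise, realising the carried diffeomorphism of $P$ as a composition of flows of elements of $\pi^{-1}(\CF)$ lifting the corresponding $\CF$-flows, matching it with a point of a suitable path holonomy bisubmersion and invoking corollary \ref{carry}(i) — an argument running parallel to lemma \ref{lem:smallelements}. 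The connected-fibre hypothesis on $\pi$ enters precisely here, and ensures compatibility of the identification with the source-connectedness of both holonomy groupoids (lemma \ref{lem:sconnholgroupid}).
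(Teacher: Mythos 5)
Your construction of the pullback bisubmersion, the pullback atlas, and the identification $G(\pi^{-1}\CU)\cong\pi^{-1}(G(\CU))$ coincides with the paper's (lemma \ref{pullbi}, lemma \ref{lem:pullbackatlas}, proposition \ref{super}); in particular your ``key lemma'' that the diffeomorphisms of $P$ carried by $\pi^{-1}(U)$ at $(q,u,p)$ are exactly the $\pi$-lifts of those carried by $U$ at $u$ is, in substance, the paper's proof of the claim in proposition \ref{super} (composing a bisection of $\pi^{-1}(U)$ with a local $\pi$-section to descend the carried diffeomorphism). The genuine divergence is in the last step, which you correctly single out as the crux: showing that $\pi^{-1}(\CU)$ is equivalent to a path holonomy atlas $\mathcal W$ of $\pi^{-1}(\CF)$. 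You propose to prove directly that each $\pi^{-1}(U)$ is adapted to $\mathcal W$ by decomposing a carried diffeomorphism of $P$ into flows of lifts $\tilde X_i\in\pi^{-1}(\CF)$ of the $X_i$ composed with flows of vertical fields in $\Gamma_c(\ker d\pi)$ (the latter needed to reach the prescribed point $q$ in the fibre over $\bt_U(u)$, which is where connectedness of the $\pi$-fibres enters), then invoking corollary \ref{carry}(i); this can be made to work, but it requires redoing the subdivision-and-compactness bookkeeping of lemma \ref{lem:smallelements} upstairs. The paper instead argues softly (proposition \ref{prop:injmap}): corollary \ref{cor:adapted} already gives an \emph{injective} continuous morphism $\varphi\colon H(\pi^{-1}\CF)\to G(\pi^{-1}\CU)$, its image contains a neighbourhood of the identities by a dimension count plus proposition \ref{path}(ii) and the openness of the quotient map, and $G(\pi^{-1}\CU)\cong\pi^{-1}(H(\CF))$ is source-connected because $H(\CF)$ is (lemma \ref{lem:sconnholgroupid}) and $\pi$ has connected fibres; since a source-connected open topological groupoid is generated by any symmetric neighbourhood of the identities, $\varphi$ is surjective. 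The trade-off: your route gives an explicit local morphism at every point but costs a delicate flow-decomposition argument; the paper's route uses the connected-fibre hypothesis only through source-connectedness and avoids that bookkeeping entirely. If you pursue your version, the step to write out carefully is that the composition of the vertical flows with $\exp(\sum_i v_i\tilde X_i)$ is carried, at a suitable point, by a finite composition of path holonomy bisubmersions of $\pi^{-1}(\CF)$ lying in $\mathcal W$.
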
 
{The pullback of a topological groupoid  is defined as in Def. \ref{def:pullbackgroid}}. For the sake of exposition, we first sketch a proof of theorem. \ref{thm:pullbackgroid} in the case of regular foliations.

\begin{proof}[Proof of theorem. \ref{thm:pullbackgroid} for regular foliations]
Assume that $\CF$ is a regular foliation, then $\pi^{-1}\CF$ is also a regular foliation. In this case the holonomy groupoid is given by holonomy classes of paths. Define the {groupoid morphism}

\begin{align*}
 \varphi\colon H(\pi^{-1}(\CF))&\fto \pi^{-1}(H(\CF))\\
  [\tau]\;\;&\mapsto(\tau(1),[\pi(\tau)],\tau(0)).
\end{align*}

We show that this map is  injective. Take two paths $\tau,\tilde{\tau}$ in leaves of $P$ with the same initial point $p_0$ and  final point $p_1$.
For $i=0,1$, if $\Sigma_i$ is a transversal to $\pi^{-1}(\CF)$ at $p_i$ then $\pi(\Sigma_i)$ is a transversal of $\CF$ at $\pi(p_i)$. Let $\Phi, \tilde{\Phi}\colon\Sigma_0\fto\Sigma_1$ be the holonomy maps given by $\tau$ and $\tilde{\tau}$ respectively, and $\phi,\tilde{\phi}\colon \pi(\Sigma_0)\fto \pi(\Sigma_1)$ the holonomy maps given by $\pi(\tau)$ and $\pi(\tilde{\tau})$. The following diagram commutes:
 \[\begin{tikzcd}
\Sigma_0 \arrow[d,"\pi"] \arrow[r,"\Phi"] & \Sigma_1 \arrow[d,"\pi"] \\
\pi(\Sigma_0) \arrow[r,"\phi"] & \pi(\Sigma_1), \\ 
\end{tikzcd}\]
and the analog diagram for $\tilde{\Phi}, \tilde{\phi}$ too.
The vertical maps $\pi\colon \Sigma_i \fto \pi(\Sigma_i)$ are diffeomorphisms (notice that the codimensions of $\CF$ and $\pi^{-1}\CF$ are equal).
{Hence if $\pi(\tau)$ and $\pi(\tilde{\tau})$ have the same holonomy, i.e.  $\phi=\tilde{\phi}$, then $\Phi=\tilde{\Phi}$}.

To prove the surjectivity of $\varphi$, take $(p,[\gamma],q)\in \pi^{-1}(H(\CF))$ where $p,q\in P$ and $\gamma$ is a curve   {in a leaf} of $\CF$ that connects $\pi(q)$ with $\pi(p)$. 
The hypotheses on $\pi$ imply that $\text{Pr}_1\colon \gamma^{*}P:=[0,1] {}_\gamma \!\times_\pi P \fto [0,1]$ is a surjective submersion with connected fibres, hence  $\gamma^{*}P$ is a connected manifold and  therefore a path connected space. Take a curve  $\sigma\colon [0,1]\fto \gamma^{*}P$ that connects $(0,q)$ with $(1,p)$.
We have the following commutative diagram:
\[\begin{tikzcd}
\left[0,1 \right] \arrow[r,"\sigma"] &\gamma^{*}P=[0,1] {}_\gamma \!\times_\pi P \arrow[d,"\text{Pr}_1"] \arrow[r,"\text{Pr}_2"] & P \arrow[d,"\pi"] \\
&\left[0,1 \right] \arrow[r,"\gamma"] & M.
\end{tikzcd}\]
The curve
$\widehat{\gamma}:= \text{Pr}_2\circ \sigma$ lies in a leaf of $\pi^{-1}(\cF)$ and joins $q$ with $p$. Since $\text{Pr}_1 \circ\sigma\colon [0,1]\fto[0,1]$ is a continuous and surjective function homotopic to the identity, from the commutativity of the  diagram it follows that $\pi\circ \widehat{\gamma}$ and $\gamma$ are homotopy equivalent and so holonomy equivalent. {Hence} $\varphi([\widehat{\gamma}])=(p,[\gamma],q)$,   proving that $\varphi$ is surjective and therefore bijective. 
\end{proof}

We now turn to the proof of theorem. \ref{thm:pullbackgroid}. {The first step is to state and prove proposition \ref{super},} which requires some preparation. We first focus on
pullbacks of atlases {of bisubmersions}, which are relevant for
the l.h.s. of the isomorphism claimed there.
We state first   \cite[lemma 2.3]{AndrSk}:

\begin{lem}\label{lem:compbisub} Let $(U, \bt, \bs)$ be a bisubmersion of $(M,\CF)$ and $\pi \colon W\fto U$ be a submersion. Then $(W, \bt\circ \pi, \bs\circ \pi)$ is a bisubmersion  for $\cF$.
\end{lem}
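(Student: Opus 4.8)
The plan is to reduce the statement to two ingredients that are already available: the functoriality of the pullback of foliations under submersions (used e.g. in Remark \ref{lem:localsubm}), and the elementary identity $\Gamma_c(\ker d\bs)=\bs^{-1}(0_M)$, where $0_M=\{0\}\subset\CX_c(M)$ denotes the zero foliation (and likewise $\Gamma_c(\ker d\bt)=\bt^{-1}(0_M)$). Write $\CF_U:=\Gamma_c(\ker d\bs)+\Gamma_c(\ker d\bt)$, so that the bisubmersion hypothesis on $(U,\bt,\bs)$ reads $\bs^{-1}(\CF)=\bt^{-1}(\CF)=\CF_U$. Since $\bs\circ\pi$ and $\bt\circ\pi$ are compositions of submersions they are again submersions, so $\ker d(\bs\circ\pi)$ and $\ker d(\bt\circ\pi)$ are honest subbundles of $TW$ and the expressions below make sense; the goal is to show that $(\bs\circ\pi)^{-1}(\CF)$ and $(\bt\circ\pi)^{-1}(\CF)$ both coincide with $\mathcal{K}_W:=\Gamma_c(\ker d(\bs\circ\pi))+\Gamma_c(\ker d(\bt\circ\pi))$.

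By functoriality, $(\bs\circ\pi)^{-1}(\CF)=\pi^{-1}(\bs^{-1}(\CF))=\pi^{-1}(\CF_U)$ and $(\bt\circ\pi)^{-1}(\CF)=\pi^{-1}(\bt^{-1}(\CF))=\pi^{-1}(\CF_U)$, so the two left-hand sides already agree, and it suffices to prove $\pi^{-1}(\CF_U)=\mathcal{K}_W$. For the inclusion $\mathcal{K}_W\subseteq\pi^{-1}(\CF_U)$ I would again invoke functoriality, now applied to the zero foliation: $\Gamma_c(\ker d(\bs\circ\pi))=(\bs\circ\pi)^{-1}(0_M)=\pi^{-1}(\bs^{-1}(0_M))=\pi^{-1}(\Gamma_c(\ker d\bs))$, and since $\Gamma_c(\ker d\bs)\subseteq\CF_U$ the obvious monotonicity of the pullback gives $\Gamma_c(\ker d(\bs\circ\pi))\subseteq\pi^{-1}(\CF_U)$; the same computation for $\bt$ yields $\mathcal{K}_W\subseteq\pi^{-1}(\CF_U)$.

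The reverse inclusion $\pi^{-1}(\CF_U)\subseteq\mathcal{K}_W$ is the substantive step, and the bookkeeping of supports and of non-projectable directions is where I expect the main difficulty. By Definition \ref{def:pullback} it is enough to treat a generator $fY$, where $f\in\CI_c(W)$ and $Y$ is a projectable vector field on $W$ projecting to some $Z\in\CF_U$. Decompose $Z=Z_s+Z_t$ with $Z_s\in\Gamma_c(\ker d\bs)$ and $Z_t\in\Gamma_c(\ker d\bt)$, and, using an Ehresmann connection for $\pi$, choose projectable lifts $\tilde Z_s,\tilde Z_t\in\CX(W)$ of $Z_s,Z_t$. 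Since $d\pi(\tilde Z_s)=Z_s\circ\pi$ is killed by $d\bs$, one has $\tilde Z_s\in\Gamma(\ker d(\bs\circ\pi))$ and similarly $\tilde Z_t\in\Gamma(\ker d(\bt\circ\pi))$, while $Y-\tilde Z_s-\tilde Z_t$ projects to $Z-Z_s-Z_t=0$ and hence lies in $\Gamma(\ker d\pi)\subseteq\Gamma(\ker d(\bs\circ\pi))$. Multiplying the decomposition $Y=\tilde Z_s+(Y-\tilde Z_s-\tilde Z_t)+\tilde Z_t$ by the compactly supported function $f$ cures the fact that the lifts need not themselves be compactly supported, and displays $fY$ as the sum of $f\tilde Z_s+f(Y-\tilde Z_s-\tilde Z_t)\in\Gamma_c(\ker d(\bs\circ\pi))$ and $f\tilde Z_t\in\Gamma_c(\ker d(\bt\circ\pi))$, i.e. $fY\in\mathcal{K}_W$. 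As $\mathcal{K}_W$ is a $\CI_c(W)$-module it contains all finite combinations of such generators, so $\pi^{-1}(\CF_U)\subseteq\mathcal{K}_W$, completing the proof.
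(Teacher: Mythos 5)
Your argument is correct. Note that the paper does not actually prove this lemma: it is imported verbatim from Androulidakis--Skandalis (\cite[lemma 2.3]{AndrSk}), so there is no in-paper proof to compare against. Your route is essentially the standard one behind that citation: functoriality of the pullback reduces everything to the single identity $\pi^{-1}\bigl(\Gamma_c(\ker d\bs)+\Gamma_c(\ker d\bt)\bigr)=\Gamma_c(\ker d(\bs\circ\pi))+\Gamma_c(\ker d(\bt\circ\pi))$, whose easy inclusion you get from monotonicity applied to the zero foliation and whose substantive inclusion you get by splitting a projectable generator using horizontal lifts of $Z_s$ and $Z_t$ and absorbing the $\pi$-vertical remainder into $\ker d(\bs\circ\pi)$. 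All the delicate points are handled properly: the observation that $\Gamma_c(\ker d\bs)=\bs^{-1}(0_M)$ is valid under Definition \ref{def:pullback}, the horizontal lifts need not be compactly supported but multiplication by the cutoff $f$ fixes that, and $\Gamma(\ker d\pi)\subset\Gamma(\ker d(\bs\circ\pi))$ justifies where you place the remainder term. The only dependency worth flagging explicitly is the functoriality identity $(\bs\circ\pi)^{-1}(\cF)=\pi^{-1}(\bs^{-1}(\cF))$ itself, which is also a result of \cite{AndrSk} rather than something proved in this paper; since the paper freely uses it (e.g.\ in Remark \ref{lem:localsubm} and Lemma \ref{lem:HausMEfolER}), relying on it here is legitimate.
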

 The following lemma allows us to pull back bisubmersions.

\begin{lem}\label{pullbi}
Let $(M, \CF)$ be a foliated manifold, $(U, \bt, \bs)$ a bisubmersion for $\CF$ and $\pi\colon P\fto M$ a surjective submersion. Consider the preimages $P^\bs:=\pi^{-1}(\bs(U))$ and  $P^\bt:=\pi^{-1}(\bt(U))$. Define $$\pi^{-1}(U):=P^\bt {}_{\pi} \!\times_{\bt} U {}_\bs \!\times_\pi P^\bs.$$ Let $\tau,\sigma\colon \pi^{-1}(U)\fto P$ be the  projections {onto the first and third component. Then $(\pi^{-1}(U),\tau,\sigma)$} is bisubmersion for $\pi^{-1}(\CF)$.
\end{lem}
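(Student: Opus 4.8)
The plan is to reduce everything to the already-established Lemma~\ref{lem:compbisub} together with the functoriality of the pullback of foliations, so that the only genuinely new input is an elementary identity between modules of kernel sections.

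First I would record that $\tau$ and $\sigma$ are submersions: since $\pi^{-1}(U)=P^\bt {}_\pi\!\times_\bt U {}_\bs\!\times_\pi P^\bs$ is an iterated fibre product of the submersions $\bs,\bt,\pi$, the projections onto its factors are submersions, so in particular $\tau$ and $\sigma$ are. The same reasoning shows that the middle projection $\Pi\colon\pi^{-1}(U)\to U$, $(p',u,p)\mapsto u$, is a surjective submersion (its fibre over $u$ is $\pi^{-1}(\bt(u))\times\pi^{-1}(\bs(u))$, which is nonempty because $\pi$ is surjective). The key observation is that $\pi\circ\tau=\bt\circ\Pi$ and $\pi\circ\sigma=\bs\circ\Pi$, as is immediate from the defining fibre-product constraints $\pi(p')=\bt(u)$ and $\pi(p)=\bs(u)$.

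Second, applying Lemma~\ref{lem:compbisub} to the bisubmersion $(U,\bt,\bs)$ and the submersion $\Pi$, the triple $(\pi^{-1}(U),\bt\circ\Pi,\bs\circ\Pi)$ is a bisubmersion for $\CF$ on $M$; that is,
\[(\bt\circ\Pi)^{-1}\CF=(\bs\circ\Pi)^{-1}\CF=\Gamma_c(\ker d(\bt\circ\Pi))+\Gamma_c(\ker d(\bs\circ\Pi)).\]
I would then rewrite both sides using $\bt\circ\Pi=\pi\circ\tau$, $\bs\circ\Pi=\pi\circ\sigma$, and the functoriality of the pullback (the principle already invoked in Remark~\ref{lem:localsubm}): $(\pi\circ\tau)^{-1}\CF=\tau^{-1}(\pi^{-1}\CF)$ and likewise for $\sigma$. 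This at once yields $\tau^{-1}(\pi^{-1}\CF)=\sigma^{-1}(\pi^{-1}\CF)=\Gamma_c(\ker d(\pi\circ\tau))+\Gamma_c(\ker d(\pi\circ\sigma))$, which is precisely the bisubmersion condition \emph{except} that the right-hand side features the kernels of $\pi\circ\tau$ and $\pi\circ\sigma$ rather than of $\tau$ and $\sigma$.

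The final, and main, step is therefore the linear-algebraic identity
\[\Gamma_c(\ker d(\pi\circ\tau))+\Gamma_c(\ker d(\pi\circ\sigma))=\Gamma_c(\ker d\tau)+\Gamma_c(\ker d\sigma).\]
The inclusion $\supseteq$ is clear since $\ker d\tau\subseteq\ker d(\pi\circ\tau)$ and $\ker d\sigma\subseteq\ker d(\pi\circ\sigma)$. For $\subseteq$ I would argue pointwise: a vector in $\ker d(\pi\circ\tau)$ is a triple $(\dot p',\dot u,\dot p)$ tangent to $\pi^{-1}(U)$ with $\dot p'\in\ker d\pi$, whence $\dot u\in\ker d\bt$ by the tangency relation $d\pi(\dot p')=d\bt(\dot u)$; it then splits as $(\dot p',0,0)+(0,\dot u,\dot p)$, the first summand lying in $\ker d\sigma$ and the second in $\ker d\tau$ (one checks both summands stay tangent to $\pi^{-1}(U)$, using $\dot u\in\ker d\bt$ and $d\pi(\dot p)=d\bs(\dot u)$). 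Since this splitting is just the restriction of the ambient decomposition $T(P^\bt\times U\times P^\bs)=TP^\bt\times TU\times TP^\bs$, it is smooth and preserves compact supports, hence passes to sections; the same works for $\pi\circ\sigma$. Combining this identity with the previous paragraph gives the bisubmersion property of $(\pi^{-1}(U),\tau,\sigma)$ for $\pi^{-1}(\CF)$. I expect the structural reductions through Lemma~\ref{lem:compbisub} and functoriality to do the heavy lifting, leaving only two points that demand care: the verification that $\Pi$ is a (surjective) submersion, and the smoothness and compact-support preservation of the kernel splitting in the last step.
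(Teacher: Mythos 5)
Your proof is correct and follows essentially the same route as the paper: apply Lemma~\ref{lem:compbisub} to the middle projection $\mathrm{Pr}_U$ and transfer the conclusion through the commutative diagram identifying $\pi\circ\tau=\bt\circ\mathrm{Pr}_U$ and $\pi\circ\sigma=\bs\circ\mathrm{Pr}_U$. The only difference is that you make explicit the kernel-splitting identity $\Gamma_c(\ker d(\pi\circ\tau))+\Gamma_c(\ker d(\pi\circ\sigma))=\Gamma_c(\ker d\tau)+\Gamma_c(\ker d\sigma)$, which the paper compresses into ``using the commutativity of the diagram we are done''; this is a genuine and correctly handled detail rather than a deviation in strategy.
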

\begin{proof}
The following 
diagram commutes:
\[
\begin{tikzcd}
\pi^{-1}(U) \arrow[d,"\text{Pr}_U"] \arrow[r,shift right=.3em,swap,"{\sigma}"] 
\arrow[r,shift left=.3em,"\tau"] & P \arrow[d,"\pi"] \\
U \arrow[r,shift right=.3em,swap,"\bs"] \arrow[r,shift left=.3em,"\bt"] & M
\end{tikzcd}
\]
Moreover, since $\pi$ is a  submersion one can prove that  $\tau$ and $\sigma$ are submersions. {By the same reason $\text{Pr}_U$ is a submersion, and applying lemma \ref{lem:compbisub} {to it} we obtain $(\bt\circ \text{Pr}_U)^{-1}(\cF)=
(\bs\circ \text{Pr}_U)^{-1}(\cF)$.}
 Using the commutativity of the diagram  we are done.
\end{proof}

\begin{defi}
We call the bisubmersion $\pi^{-1}(U)$ given in lemma \ref{pullbi} the \textbf{pullback bisubmersion} of $U$.
\end{defi}
\begin{lem}\label{lem:pullbackatlas}
Let $\CU$ be an atlas of bisubmersions for $\CF$. Then $\pi^{-1} \CU:=\{\pi^{-1}(U) \st U\in\CU\}$ is an atlas of bisubmersion for $\pi^{-1}(\CF)$.
\end{lem}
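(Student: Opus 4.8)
The plan is to verify the three defining properties of an atlas from Definition \ref{def:adapted} for the family $\pi^{-1}\CU$, now with $\pi^{-1}(\CF)$ as the ambient foliation. That each $\pi^{-1}(U)$ is a bisubmersion for $\pi^{-1}(\CF)$ is exactly Lemma \ref{pullbi}, so this costs nothing. The covering condition is the easy part: the source map $\sigma$ of $\pi^{-1}(U)$ is the projection onto $P^\bs=\pi^{-1}(\bs(U))$, and it is surjective onto $P^\bs$ because $\pi$ is surjective (given $p\in P^\bs$, pick $u\in U$ with $\bs(u)=\pi(p)$ and any $p'\in\pi^{-1}(\bt(u))$, so $(p',u,p)\in\pi^{-1}(U)$). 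Hence $\bigcup_i\sigma_i(\pi^{-1}(U_i))=\pi^{-1}\big(\bigcup_i\bs_i(U_i)\big)=\pi^{-1}(M)=P$.

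The conceptual core, which I would isolate first as a preliminary claim, is the \emph{functoriality} of the pullback on morphisms of bisubmersions: any morphism $f\colon V\to W$ of bisubmersions for $\CF$ induces a morphism $\pi^{-1}(f)\colon\pi^{-1}(V)\to\pi^{-1}(W)$ for $\pi^{-1}(\CF)$, given by $(p_1,v,p_2)\mapsto(p_1,f(v),p_2)$. I would check this is well defined using that $f$ commutes with $\bs$ and $\bt$ (so the fibre-product conditions cutting out $\pi^{-1}(W)$ are preserved), and that it intertwines the maps $\tau,\sigma$. The consequence I am after is: if a bisubmersion $V$ is adapted to $\CU$, then $\pi^{-1}(V)$ is adapted to $\pi^{-1}\CU$, since a local morphism $V'\to U_i$ near $v_0$ pulls back to a morphism $\pi^{-1}(V')\to\pi^{-1}(U_i)$ on the open set $\pi^{-1}(V')$, a neighbourhood of any point of $\pi^{-1}(V)$ lying over $v_0$.

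With this in hand the remaining two atlas conditions reduce to compatibility of the pullback with inverses and compositions. For inverses there is a canonical identification $(\pi^{-1}(U))^{-1}\cong\pi^{-1}(U^{-1})$ (just reorder the triple), so since $U^{-1}$ is adapted to $\CU$, the functoriality claim yields that $\pi^{-1}(U^{-1})$, hence $(\pi^{-1}(U))^{-1}$, is adapted to $\pi^{-1}\CU$. For compositions the point to notice is that pullback does \emph{not} commute with composition on the nose: an element of $\pi^{-1}(U)\circ\pi^{-1}(V)$ carries an extra intermediate point $q\in P$ lying over the gluing point in $M$, which $\pi^{-1}(U\circ V)$ does not record. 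I would therefore produce the natural morphism in the only available direction, namely the forgetful map $\pi^{-1}(U)\circ\pi^{-1}(V)\to\pi^{-1}(U\circ V)$, $((p_1,u,q),(q,v,p_2))\mapsto(p_1,(u,v),p_2)$, and check it is a morphism of bisubmersions. Since $U\circ V$ is adapted to $\CU$, functoriality shows $\pi^{-1}(U\circ V)$ is adapted to $\pi^{-1}\CU$, and precomposing its local morphisms with the forgetful map shows $\pi^{-1}(U)\circ\pi^{-1}(V)$ is too.

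The main obstacle is precisely this mismatch between pullback and composition: one must resist expecting an isomorphism $\pi^{-1}(U\circ V)\cong\pi^{-1}(U)\circ\pi^{-1}(V)$ and instead exploit the one-sided forgetful morphism together with functoriality. Everything else — surjectivity of $\sigma$, smoothness and well-definedness of $\pi^{-1}(f)$, and the bookkeeping that these maps intertwine source and target — is routine verification via the universal property of fibre products, using that $\pi$ is a surjective submersion.
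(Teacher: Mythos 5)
Your proposal is correct and follows essentially the same route as the paper: the covering condition is immediate, inverses are handled by the evident identification, and compositions are handled via the one-sided forgetful morphism $\pi^{-1}(U)\circ\pi^{-1}(V)\to\pi^{-1}(U\circ V)$ composed with the ``lift'' $\pi^{-1}(f)$ of a local adapting morphism $f$ into an element of $\CU$. The only difference is presentational: you isolate the functoriality of $f\mapsto\pi^{-1}(f)$ as a preliminary claim and spell out the surjectivity of the pulled-back source maps, both of which the paper leaves implicit.
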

\begin{proof}
It is clear that the union of the elements of $\pi^{-1} \CU$ covers $P$. We now check that the compositions of elements in $\pi^{-1} \CU$ are adapted to $\pi^{-1} \CU$. 
To do so, take $U_2,U_1\in \CU$. Note that we have a canonical morphism of {bisubmersions} 
\begin{equation}\label{eq:morphcomp}
\pi^{-1}(U_2)\circ \pi^{-1}(U_1)  \fto \pi^{-1}(U_2\circ U_1);\;\;(p,u_2,a)\times(a,u_1,q)\mapsto(p,u_2\times u_1,q).  
\end{equation}
Moreover, since $\CU$ is an atlas, at   each point $u_2\times u_1$ the bisubmersion $U_2\circ U_1$ is adapted to some $U_{u_1\times u_2}\in \CU$. This means that there is a small neighbourhood $V'\subset U_1\circ U_2$ containing $u_2\times u_1$, and a morphism of bisubmersions $f\colon V'\fto U_{u_1\times u_2}$. {Composing a suitable restriction of the morphism \eqref{eq:morphcomp} with the natural ``lift'' of $f$ we obtain
 a morphism of bisubmersions}  $$(p,u_2,a)\times(a,u_1,q)\mapsto(p,f(u_2\times u_1),q)\in \pi^{-1}(U_{u_1\times u_2})$$
{into an element of $\pi^{-1}\CU$.}
{For inverses of elements in $\pi^{-1} \CU$ one  proceeds  similarly}.
\end{proof} 

\begin{prop}\label{super}
Let $\CU$ be an atlas of bisubmersions on a foliated manifold $(M,\CF)$, and denote by $G(\CU)$ the groupoid given by $\CU$. Let $\pi\colon P\fto M$ be a surjective submersion,  $\pi^{-1} \CU$ the pullback atlas, and denote by $G(\pi^{-1} \CU)$ the groupoid of this atlas. Then there is a canonical {isomorphism of topological groupoids} 
$$G(\pi^{-1} \CU)\cong\pi^{-1}(G(\CU)).$$
\end{prop}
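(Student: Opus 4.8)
The plan is to write down the evident map on representatives and show it descends to the claimed isomorphism. Let $Q\colon \sqcup_{i}U_i\fto G(\CU)$ and $Q'\colon \sqcup_{i}\pi^{-1}(U_i)\fto G(\pi^{-1}\CU)$ be the two quotient maps, and recall that a point of $\pi^{-1}(U_i)$ is a triple $(p,u,q)$ with $\pi(p)=\bt(u)$ and $\pi(q)=\bs(u)$, whose target and source are the outer projections $\tau(p,u,q)=p$ and $\sigma(p,u,q)=q$. I would define $\Phi\colon \sqcup_i\pi^{-1}(U_i)\fto \pi^{-1}(G(\CU))$ by $\Phi(p,u,q)=(p,Q(u),q)$; this lands in $\pi^{-1}(G(\CU))$ since $\bt(Q(u))=\bt(u)=\pi(p)$ and $\bs(Q(u))=\bs(u)=\pi(q)$, and it is surjective because every arrow of $G(\CU)$ has the form $Q(u)$. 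As $\Phi$ is a restriction of $\mathrm{id}_P\times Q\times\mathrm{id}_P$, it is continuous. The goal is to show $\Phi$ factors through $Q'$ to a bijective, bicontinuous groupoid morphism $\bar\Phi\colon G(\pi^{-1}\CU)\fto \pi^{-1}(G(\CU))$.

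The heart of the matter is to prove that, for valid triples, $(p,u,q)\sim(p',u',q')$ in $\pi^{-1}\CU$ if and only if $p=p'$, $q=q'$ and $[u]=[u']$ in $G(\CU)$; this yields simultaneously well-definedness and injectivity of $\bar\Phi$. Since a morphism of (pullback) bisubmersions commutes with $\sigma$ and $\tau$, any equivalence forces $p=p'$ and $q=q'$ at once, so only the middle components are at issue. The implication $[u]=[u']\Rightarrow(p,u,q)\sim(p,u',q)$ is easy: a local bisubmersion morphism $g\colon U_i\fto U_j$ with $g(u)=u'$ induces the pullback-bisubmersion morphism $(a,w,b)\mapsto(a,g(w),b)$, which is well defined because $g$ preserves $\bs,\bt$, and sends $(p,u,q)$ to $(p,u',q)$. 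The converse is the main obstacle. Given a local morphism $f\colon\pi^{-1}(U_i)\fto\pi^{-1}(U_j)$ with $f(p,u,q)=(p,u',q)$, I would choose local sections $b,c$ of $\pi$ through $q$ and $p$, form the local section $\beta\colon U_i\fto\pi^{-1}(U_i)$ of $\mathrm{Pr}_{U_i}$ given by $w\mapsto(c(\bt w),w,b(\bs w))$, so $\beta(u)=(p,u,q)$, and set $h:=\mathrm{Pr}_{U_j}\circ f\circ\beta$. Because $f$ preserves the outer components, $f(\beta(w))=(c(\bt w),h(w),b(\bs w))$; reading off the fibre-product constraints defining $\pi^{-1}(U_j)$ gives $\bt\circ h=\bt$ and $\bs\circ h=\bs$, so $h\colon U_i\fto U_j$ is a genuine morphism of bisubmersions with $h(u)=u'$, whence $[u]=[u']$. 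This makes $\bar\Phi$ a well-defined bijection.

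Next I would verify that $\bar\Phi$ is a morphism of groupoids. It intertwines source and target, which on both sides are the outer projections to $P$. For multiplication, the canonical morphism \eqref{eq:morphcomp} sends $(p,u_2,a)\times(a,u_1,q)$ to $(p,u_2\times u_1,q)$, and combined with the rule $q_i(u_2)q_j(u_1)=q_{U_i\circ U_j}(u_2\times u_1)$ of proposition \ref{prop:groidatlas}(iii) and the identity $[u_2\times u_1]=[u_2][u_1]$ in $G(\CU)$, this gives $\bar\Phi([(p,u_2,a)])\cdot\bar\Phi([(a,u_1,q)])=\bar\Phi([(p,u_2,a)]\cdot[(a,u_1,q)])$; units are treated identically.

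Finally, continuity of $\bar\Phi$ is automatic from continuity of $\Phi$ and the universal property of $Q'$. Since $Q'$ is surjective and open by lemma \ref{lem:openmap}, openness of $\bar\Phi$ reduces to openness of $\Phi$. Here $\mathrm{id}_P\times Q\times\mathrm{id}_P\colon P\times U_i\times P\fto P\times G(\CU)\times P$ is open because $Q$ is open (lemma \ref{lem:openmap}), and for any open $O=\tilde O\cap\pi^{-1}(U_i)$ one checks the identity $\Phi(O)=(\mathrm{id}_P\times Q\times\mathrm{id}_P)(\tilde O)\cap\pi^{-1}(G(\CU))$, the nontrivial inclusion using that whenever $(p,Q(u'),q)\in\pi^{-1}(G(\CU))$ with $(p,u',q)\in\tilde O$, the constraints $\bt(u')=\pi(p)$ and $\bs(u')=\pi(q)$ place $(p,u',q)$ back in $\pi^{-1}(U_i)$. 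Thus $\Phi(O)$ is open in $\pi^{-1}(G(\CU))$, so $\bar\Phi$ is open and hence a homeomorphism. I expect the two genuinely substantial points to be the converse implication in the second paragraph (extracting a bisubmersion morphism $h$ downstairs from $f$ upstairs via the section $\beta$) and the fibre-product bookkeeping for openness; the remaining verifications are routine.
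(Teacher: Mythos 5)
Your proof is correct and follows the same overall strategy as the paper: the same map $(p,u,q)\mapsto(p,[u],q)$, the same factorization through the quotient, and the same key claim that equivalence upstairs is detected by $p$, $q$ and the class $[u]$ downstairs. The one place where you genuinely diverge is the forward implication of that claim: the paper invokes the characterization of the equivalence relation via bisections carrying the same diffeomorphism, picks one local $\pi$-section through $q_0$, and pushes the two bisections down to $M$; you instead build a section $\beta(w)=(c(\bt w),w,b(\bs w))$ of $\mathrm{Pr}_{U_i}$ from two local $\pi$-sections and read off a morphism of bisubmersions $h=\mathrm{Pr}_{U_j}\circ f\circ\beta$ directly from the fibre-product constraints. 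Your route works straight from the definition of the equivalence relation and avoids the bisection remark, at the cost of slightly more bookkeeping; it also has the minor virtue of making explicit the openness of the restricted map $\mathrm{id}\times Q\times\mathrm{id}$ on the fibre product, which the paper asserts rather tersely.
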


\begin{proof}
 {The quotient map $Q\colon\sqcup_{U\in\CU} U \fto G(\CU)$ lifts to
a canonical map 
\begin{equation}\label{map:pullbackatlas}
Id\times Q\times Id\colon \left(\sqcup_{U\in\CU} \pi^{-1}U\right) \fto \pi^{-1} (G(\CU)),\hspace{0.2in} (p,u,q)\mapsto (p,[u],q),
\end{equation}
where we denote  $[u]:=Q(u)$.}
We will show that this map factor through the projection 
map associated to the atlas $\pi^{-1}\CU$, determining 
a map  $\Phi\colon G(\pi^{-1}\CU)\fto \pi^{-1}(G(\CU))$,
which moreover   is an isomorphism of topological groupoids. 
\begin{equation}
\begin{tikzcd}\label{diag:iQi}
\left(\sqcup_{U\in\CU} \pi^{-1}U\right) \arrow[dr,"Id\times Q\times Id"] \arrow[d,"Q_\pi"]&  \\
 G(\pi^{-1}\CU) \arrow[r,dashed,"\Phi"] & \pi^{-1} (G(\CU))
\end{tikzcd}  
\end{equation}

{The fact that $\Phi$ is well-defined and injective follow from the claim below (respectively, from the implications ``$\Rightarrow$'' and ``$\Leftarrow$'').} The surjectivity of $\Phi$ is clear because the map $Id\times Q\times Id$ given in \eqref{map:pullbackatlas} is surjective. The fact that $\Phi$ is a homeomorphism holds because both $Q_\pi$   and $Id\times Q\times Id$ are open maps, {by lemma \ref{lem:openmap}. The map $\Phi$ is a groupoid morphism as a consequence of proposition \ref{prop:groidatlas} {(iii)} and of the morphism of bisubmersions \eqref{eq:morphcomp}. Hence we are left with proving the following claim for all $ (p_0,u_0,q_0), (p_0,v_0,q_0)\in\sqcup_{U\in\CU} \pi^{-1}U$.}\\

\noindent\underline{{ Claim:}}   {\it $Q_\pi(p_0,u_0,q_0)=Q_\pi(p_0,v_0,q_0)$ in $G(\pi^{-1}\CU)$ if and only if $[u_0]=[v_0]$ in $G(\CU)$}.

``$\Rightarrow$'': 
{By assumption} 
there exist bisections $\sigma_u$ and $\sigma_v$ {through}  $(p_0,u_0,q_0)$ and $(p_0,v_0,q_0)$ respectively, and carrying the same diffeomorphism {of $P$}. Since $\pi$ is a submersion, there exists a neighbourhood $W$ of $\pi(q_0)\in M$ and a $\pi$-section $q\colon W\fto P$, such that $q(\pi(q_0))=q_0$. Finally $\text{Pr}_2 \circ\sigma_u\circ q $ is a bisection {through} $u_0$ carrying the same diffeomorphism {of $M$} as the bisection $\text{Pr}_2\circ\sigma_v\circ q $ {through} $v_0$. Therefore $[u_0]=[v_0]$ in $G(\CU)$.

``$\Leftarrow$'': Let {$ u_0\in U$ and $v_0\in V$ be equivalent points of} $\sqcup_{U\in \CU} U$, and let $p_0,q_0\in P$  lie in the fibre of $\bt(u_0)=\bt(v_0)$ and $\bs(u_0)=\bs(v_0)$ respectively. Then there exists a neighbourhood $U'$ of $u_0$ inside $U$ and a morphism {of bisubmersions} $f\colon U'\fto V$ {such that $f(u_0)=v_0$}.  
{Lifting it}  we get a morphism of bisubmersions $$\widehat{f}\colon \pi^{-1}U'\fto \pi^{-1}V\hspace{.1in} ;\hspace{.1in} (p,u,q)\mapsto (p,f(u),q)$$ such that $\widehat{f}(p_0,u_0,q_0)=(p_0,v_0,q_0)$. This shows that $Q_\pi(p_0,u_0,q_0)=Q_\pi(p_0,v_0,q_0)$ in $G(\pi^{-1}\CU)$.
\end{proof}

{We now take the second step for the proof of theorem. \ref{thm:pullbackgroid}.}
{
 For all path holonomy bisubmersions $U$ of $\cF$, as seen in lemma \ref{pullbi}, the pullback $\pi^{-1}(U)$ is a bisubmersion for $\pi^{-1}(\cF)$, but it is not a path holonomy bisubmersion in general (its image under the source map is a saturated open set of $P$). However its restriction to small neighbourhoods in $P$ is isomorphic to a path holonomy bisubmersion, as a consequence of \cite[proposition 2.10 b]{AndrSk}. This fact underlies the following proposition.}

\begin{prop}\label{prop:injmap}
{Let $\pi\colon P\fto M$ be a surjective submersion { with connected fibres} and $\cF$ a foliation on $M$.
 Let $\CU$ be a path holonomy atlas for $\CF$.}
There is a canonical  {isomorphism of topological groupoids}
\[H(\pi^{-1}(\CF))\cong G(\pi^{-1}(\CU))
.\]
\end{prop}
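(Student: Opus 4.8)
The plan is to exhibit both sides as groupoids of atlases of bisubmersions for $\pi^{-1}(\CF)$ and to prove that these two atlases are equivalent. By Definition \ref{def:holgroidph}, $H(\pi^{-1}(\CF))=G(\mathcal{W})$ for any path holonomy atlas $\mathcal{W}$ of the foliation $\pi^{-1}(\CF)$ on $P$, while $\pi^{-1}\CU$ is an atlas for $\pi^{-1}(\CF)$ by Lemma \ref{lem:pullbackatlas}. Hence by Lemma \ref{lem:adapt} it suffices to show that $\mathcal{W}$ and $\pi^{-1}\CU$ are adapted to one another; the resulting isomorphism of topological groupoids is then canonical, being the one produced by Lemma \ref{lem:adapt} out of morphisms of bisubmersions.

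One inclusion is immediate: since $\mathcal{W}$ is a path holonomy atlas, it is adapted to every atlas by Corollary \ref{cor:phatlas}(i), in particular to $\pi^{-1}\CU$. The content of the proposition is the reverse statement, that every pullback bisubmersion $\pi^{-1}(U)$ with $U\in\CU$ is adapted to $\mathcal{W}$. First I would reduce this to the generating path holonomy bisubmersions of $\CF$. Indeed, any element of $\CU$ is a finite composition of path holonomy bisubmersions and their inverses, and adaptedness to $\mathcal{W}$ is stable under these operations: inversion is clear, and for composition one uses that the canonical morphism $\pi^{-1}(U_2)\circ\pi^{-1}(U_1)\fto\pi^{-1}(U_2\circ U_1)$ of \eqref{eq:morphcomp} is a surjective submersion which is a morphism of bisubmersions, hence admits local sections; such a section is itself a morphism of bisubmersions, since it automatically intertwines source and target, so precomposing a morphism $\pi^{-1}(U_2)\circ\pi^{-1}(U_1)\fto\mathcal{W}$ with it yields the morphism out of $\pi^{-1}(U_2\circ U_1)$ required for adaptedness.

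It thus remains to treat a single path holonomy bisubmersion $U\subset\RR^n\times M$ of $\CF$, built from vector fields $X_1,\dots,X_n$ whose classes form a basis of the fibre $\CF_{x_0}$. Note that $\pi^{-1}(U)$ is not itself a path holonomy bisubmersion of $\pi^{-1}(\CF)$ (its source image is a saturated open subset of $P$), so the argument must be local on $P$. Near a point $q\in P$ I would choose $\pi$-projectable lifts $\widetilde{X}_1,\dots,\widetilde{X}_n$ of $X_1,\dots,X_n$ together with a local frame $Z_1,\dots,Z_r$ of $\ker d\pi$, where $r=\dim P-\dim M$; these generate $\pi^{-1}(\CF)$ near $q$ and give rise to a path holonomy bisubmersion $W_0\subset\RR^{n+r}\times P$ of $\pi^{-1}(\CF)$. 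A dimension count gives $\dim\pi^{-1}(U)=2\dim P-\dim M+n=\dim W_0$. Moreover $\pi^{-1}(U)$ and $W_0$ carry a common local diffeomorphism of $P$ at corresponding points: a bisection of $\pi^{-1}(U)$ through $(p,(v,x),q)$ carries a lift $\psi$ of the holonomy diffeomorphism $\exp(\sum_i v_i X_i)$ of $M$, and this same lift $\psi$ is carried by a (generally non-constant) bisection of $W_0$, because the flows of the $\widetilde{X}_i$ realise lifts of $\exp(\sum_i v_i X_i)$ while the flows of the vertical fields $Z_j$ adjust them along the $\pi$-fibres. By Corollary \ref{carry}(i) there are then morphisms of bisubmersions between neighbourhoods of these points, and since the two bisubmersions have equal dimension these morphisms are local diffeomorphisms. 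This is precisely the local isomorphism $\pi^{-1}(U)\cong W_0$ asserted before the statement via \cite[proposition 2.10 b]{AndrSk}, and it exhibits $\pi^{-1}(U)$ as adapted to $\mathcal{W}$. The hypothesis that $\pi$ has connected fibres enters here, and is indispensable: together with Lemma \ref{lem:subcon} it guarantees that the source fibres of $\pi^{-1}(U)$ are connected, as they must be for the local model $W_0$ to be a path holonomy bisubmersion (and so that $\pi^{-1}(H(\CF))$ is source connected, in accordance with Lemma \ref{lem:sconnholgroupid}).

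The main obstacle is this last step, namely verifying that the pullback bisubmersion and the candidate path holonomy bisubmersion $W_0$ of $\pi^{-1}(\CF)$ carry the same local diffeomorphisms of $P$, so that Corollary \ref{carry}(i) (equivalently \cite[proposition 2.10 b]{AndrSk}) produces the local isomorphism. Everything else is bookkeeping with atlases together with the functoriality of the pullback established in Lemma \ref{pullbi} and Lemma \ref{lem:pullbackatlas}.
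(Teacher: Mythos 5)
Your overall strategy --- proving that $\pi^{-1}\CU$ and a path holonomy atlas $\mathcal{W}$ of $\pi^{-1}(\CF)$ are mutually adapted and then invoking Lemma \ref{lem:adapt}(ii) --- is a legitimate reformulation of the statement: the direction ``$\mathcal{W}$ adapted to $\pi^{-1}\CU$'' is indeed Corollary \ref{cor:phatlas}(i), and your reduction of the other direction to the generating path holonomy bisubmersions, via local sections of the morphism \eqref{eq:morphcomp}, is sound. The gap is in the base case. You claim that an \emph{arbitrary} point $(p,(v,x),q)\in\pi^{-1}(U)$ and a point of a \emph{single} path holonomy bisubmersion $W_0$ of $\pi^{-1}(\CF)$ near $q$ carry a common diffeomorphism of $P$. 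But $W_0$ is only a neighbourhood of $(0,q)$ in $\RR^{n+r}\times P$, so every diffeomorphism it carries is close to the identity (it is, up to the usual ambiguity in the choice of bisection, a time-one flow $exp(\sum_j w_j Y_j)$ with $w$ small). The lift $\psi$ you need is $exp(\sum_i v_i \widetilde{X}_i)$ composed with a possibly large displacement along the $\pi$-fibre taking $exp(\sum_i v_i \widetilde{X}_i)(q)$ to $p$; when $v$ is not small, or when $p$ sits far along the fibre, no point of $W_0$ carries $\psi$. The sentence ``the flows of the vertical fields $Z_j$ adjust them along the $\pi$-fibres'' is precisely where an actual argument is required: one must decompose $\psi$ into a finite composition of small flows (a compactness argument as in the proof of Lemma \ref{lem:smallelements}, now run both along the flow of the $\widetilde{X}_i$ and along a path in the fibre joining $exp(\sum_i v_i \widetilde{X}_i)(q)$ to $p$ --- such a path exists exactly because the fibres are connected), landing in a \emph{composition} of elements of $\mathcal{W}$ rather than in a single $W_0$.

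The paper's proof avoids this issue entirely. It establishes the local isomorphism only at the identity-like points $(p,(0,\pi(p)),p)$ --- which is where your argument does work, and which is all that Proposition \ref{path}(ii) together with the dimension count (equivalently \cite[proposition 2.10 b]{AndrSk}) delivers. Injectivity of $\varphi\colon H(\pi^{-1}(\CF))\to G(\pi^{-1}(\CU))$ then comes from Corollary \ref{cor:adapted}, while surjectivity comes from the observation that $G(\pi^{-1}(\CU))\cong\pi^{-1}(H(\CF))$ is source connected (Lemma \ref{lem:sconnholgroupid} plus connectedness of the $\pi$-fibres, transported through Proposition \ref{super}) and hence is generated by any symmetric neighbourhood of the identities, which the image of $\varphi$ contains. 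If you wish to keep your adaptedness formulation, you must either import this connectedness-and-generation argument or genuinely redo the proof of Lemma \ref{lem:smallelements} in the pullback setting; as written, the key step of your proof is unsupported.
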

\begin{proof}
{Lemma  \ref{lem:pullbackatlas}} shows that $\pi^{-1}(\CU)$ is an atlas of bisubmersions for $\pi^{-1}\CF$. Hence by Cor.  \ref{cor:adapted} 
 we   get the existence of a natural injective morphism {of topological groupoids}
{$\varphi\colon H(\pi^{-1}(\CF))\fto G(\pi^{-1}(\CU))$}. 

We show that $\varphi$ is surjective. As $H(\CF)$ is $\bs$-connected by lemma \ref{lem:sconnholgroupid}, and using that $\pi$ has connected fibres, we get that $\pi^{-1}(H(\CF))$ is an $\bs$-connected groupoid, and by proposition \ref{super} $G(\pi^{-1}(\CU))$ too. {It is a general fact that} any $\bs$-connected topological groupoid is generated by any symmetric neighbourhood of the identities, hence it suffices to show that the image of $\varphi$ contains a symmetric neighbourhood of the identities  $M$.

For this purpose, {recall that $\varphi$  is induced by morphisms of bisubmersions from a path holonomy atlas of $\pi^{-1}(\cF)$ to the atlas $\CU$ ({see the text after lemma \ref{lem:adapt}}).}
Notice that for any  path-holonomy bisubmersion $U$ of $\cF$, the pullback $\pi^{-1}(U)$ has the same dimension as a   path-holonomy bisubmersion   of $\pi^{-1}(\cF)$. Hence, by proposition \ref{path} ii), for any point $p\in \pi^{-1}(\bs(U)){\subset P}$ there is an isomorphism of bisubmersions from a neighborhood of $(0,p)$ in a path holonomy bisubmersion of ${\pi^{-1}(\cF)}$ to 
a neighbourhood of {$(p,(0,\pi(p)),p)$} in $\pi^{-1}(U)$. {Since }the quotient map $\pi^{-1}(U)\to G(\pi^{-1}(\CU))$ is an open map {by lemma \ref{lem:openmap}, we conclude that the image of $\varphi$ contains a neighbourhood of the identities}. 

{Finally, to show that the inverse map is continuous, use again lemma \ref{lem:adapt}.}
\end{proof}

 \begin{proof}[Proof of theorem. \ref{thm:pullbackgroid}]
{Let $\CU$ be a path holonomy atlas for $\CF$. 
We have a composition of isomorphisms
$$H(\pi^{-1}(\CF))\cong G(\pi^{-1}(\CU))\cong \pi^{-1}(H(\CF)),$$
where the first isomorphism is the one obtained in proposition \ref{prop:injmap} and the second isomorphism is given by proposition \ref{super} using $H(\CF)=G(\CU)$.}
 \end{proof}

\subsubsection{{Preservation of smoothness}}

{The isomorphism of theorem. \ref{thm:pullbackgroid} preserves smooth structures, whenever they are present. We now elaborate on this.} 

\begin{defi}\label{def:smoothgr}
Given a foliated manifold $(M,\CF)$ and  an atlas of bisubmersions $\CU=\{U_i : i\in I\}$ for $\CF$, we say that $G(\CU)$ is {\bf smooth} if there exists a ({necessarily unique})  smooth structure on it that makes the quotient map: $Q\colon \sqcup_i U_i \fto G(\CU)$ a submersion.
\end{defi}

It is easy to see that if $G(\CU)$ is smooth then it is a Lie {groupoid}.

\begin{lem}\label{lem:smoothtwo} 
\begin{enumerate}
 \item[(i)]  {Let $\CU$ be an atlas on a foliated manifold $(M,\CF)$, and let $\pi\colon P\fto M$ be a surjective submersion. Assume that $G(\CU)$ is smooth. Then   $G(\pi^{-1} \CU)$ is also smooth,  and the map $G(\pi^{-1} \CU)\cong\pi^{-1}(G(\CU))$ in proposition \ref{super} is an isomorphism of Lie groupoids}. 
 
 \item[(ii)]  Consider two atlases of bisubmersions $\CU_1$ and $\CU_2$, with $\CU_1$  {adapted to $\CU_2$}.  
Assume that the map {$\varphi \colon G(\CU_1)\fto G(\CU_2) $} from lemma \ref{lem:adapt} is surjective and
 $G(\CU_2)$ is smooth. Then   $G(\CU_1)$ is also smooth, {and $\varphi$ is an isomorphism of Lie groupoids}.
\end{enumerate}
\end{lem}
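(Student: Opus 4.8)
The plan is to prove both statements by the same two-step recipe: first transport the given smooth structure along the topological groupoid isomorphism at hand, and then check that the relevant quotient map becomes a submersion for the transported structure. By Definition \ref{def:smoothgr} such a smooth structure is unique once it exists, so it suffices to exhibit one candidate and verify the submersion property; the isomorphism then becomes a diffeomorphism, hence (being already a morphism of groupoids) an isomorphism of Lie groupoids.

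For part (i), since $G(\CU)$ is smooth it is a Lie groupoid, and therefore $\pi^{-1}(G(\CU))$ is a Lie groupoid by Definition \ref{def:pullbackgroid}. I would pull back this smooth structure to $G(\pi^{-1}\CU)$ through the homeomorphism $\Phi\colon G(\pi^{-1}\CU)\to\pi^{-1}(G(\CU))$ of proposition \ref{super}, so that $\Phi$ is tautologically a diffeomorphism. It then remains to see that $Q_\pi$ is a submersion. By proposition \ref{super} we have $\Phi\circ Q_\pi=Id\times Q\times Id$, the map $(p,u,q)\mapsto(p,[u],q)$; since $Q\colon\sqcup_{U\in\CU}U\to G(\CU)$ is a submersion compatible with $\bs$ and $\bt$, the induced map $Id\times Q\times Id$ between the fibre products is again a submersion (a base change of the submersion $Q$ along $\pi$ is a submersion). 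Hence $Q_\pi=\Phi^{-1}\circ(Id\times Q\times Id)$ is a submersion, $G(\pi^{-1}\CU)$ is smooth, and $\Phi$ is an isomorphism of Lie groupoids.

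For part (ii), lemma \ref{lem:adapt}(ii) tells us that the surjectivity of $\varphi$ forces $\CU_2$ to be adapted to $\CU_1$ as well, so that $\varphi\colon G(\CU_1)\to G(\CU_2)$ is an isomorphism of topological groupoids. I would transport the smooth structure of $G(\CU_2)$ to $G(\CU_1)$ via $\varphi$, making $\varphi$ a diffeomorphism, and then show that $Q_1$ is a submersion. Since $\varphi\circ Q_1=\widehat{\varphi}$ (the diagram after lemma \ref{lem:adapt}), this is equivalent to showing that $\widehat{\varphi}\colon\sqcup_{U\in\CU_1}U\to G(\CU_2)$ is a submersion, and locally $\widehat{\varphi}=Q_2\circ f$ for a morphism of bisubmersions $f\colon U\to V$ with $V\in\CU_2$.

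The main obstacle lies exactly here. A morphism of bisubmersions need not be a submersion — for instance the inclusion $U\hookrightarrow U\times\RR$ (with $\bs$ and $\bt$ ignoring the extra factor) is a morphism but not a submersion — so one cannot conclude that $Q_2\circ f$ is a submersion merely from $Q_2$ being one. I would overcome this using the local structure of bisubmersions from \cite{AndrSk}: the submersion $Q_2$ trivializes $V$ locally as $G(\CU_2)\times\RR^{d}$ with $Q_2$ the first projection, and near any point a bisubmersion is isomorphic to the product of a minimal (path-holonomy) bisubmersion with a redundant factor on which $\bs$ and $\bt$ are constant. Along these splittings a morphism $f$ restricts to an isomorphism of the minimal factors (two minimal bisubmersions carrying the same local diffeomorphism are locally isomorphic) and sends the redundant factor of $U$ into that of $V$, which $Q_2$ collapses. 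Consequently $Q_2\circ f$ is a local diffeomorphism on the minimal factor of $U$ and constant on its redundant factor; in particular, through every point $u_0$ it admits a smooth local section (lift the inverse of this local diffeomorphism and keep the redundant coordinate fixed), and a map admitting a smooth section through a point is a submersion there. This shows that $\widehat{\varphi}$ is a submersion, whence $Q_1$ is a submersion, $G(\CU_1)$ is smooth, and $\varphi$ is an isomorphism of Lie groupoids.
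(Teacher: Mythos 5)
Part (i) of your proposal coincides with the paper's argument: transport the smooth structure of $\pi^{-1}(G(\CU))$ through the homeomorphism $\Phi$, observe that $Id\times Q\times Id$ is a submersion because $Q$ is, and invoke the uniqueness clause of Definition \ref{def:smoothgr}. Nothing to add there.

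For part (ii) you correctly isolate the real difficulty -- a morphism of bisubmersions $f$ need not be a submersion, so one cannot conclude that $Q_2\circ f$ is one directly -- but your way around it is heavier than the paper's and has one under-justified step. You invoke the local normal form of a bisubmersion as (minimal factor)$\times$(redundant factor) and claim that $f$ restricts to a local diffeomorphism of the minimal factors. The parenthetical you offer in support (``two minimal bisubmersions carrying the same local diffeomorphism are locally isomorphic'') only gives the existence of \emph{some} local isomorphism between the minimal factors; it does not show that the \emph{particular} map $\mathrm{pr}_{V_{\min}}\circ f|_{U_{\min}\times\{r_0\}}$ is \'etale, which is what your section construction needs. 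That stronger statement (morphisms between minimal bisubmersions are local diffeomorphisms) is true and can be extracted from \cite{AndrSk}, but it is not proved in this paper, so as written your argument has a gap at exactly the point where all the work is.

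The paper avoids all of this with a short trick: given $u\in U$ and the morphism $f$ defined near $u$ with values in $V\in\CU_2$, corollary \ref{carry}(ii) produces a morphism of bisubmersions $g\colon V'\to U$ in the \emph{opposite} direction, defined near $f(u)$, with $g(f(u))=u$. Since $f\circ g$ is a morphism of bisubmersions from $V'$ to $V$, one has $\widehat{\varphi}\circ g=Q_2\circ f\circ g=Q_2|_{V'}$, and surjectivity of $d_{f(u)}Q_2$ then forces surjectivity of $d_u\widehat{\varphi}$ by the chain rule. This uses only ingredients already established in the paper and no structure theory of bisubmersions. If you want to salvage your route, either cite the \'etale property of morphisms between minimal bisubmersions explicitly from \cite{AndrSk}, or -- better -- replace the normal-form analysis by the ``invert the morphism with corollary \ref{carry}(ii)'' argument, noting that $\widehat{\varphi}$ is independent of the choice of $f$, so precomposing with any map onto a neighbourhood of $u$ along which the composite is a submersion suffices.
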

\begin{proof}
 (i) {
Being the pullback of a Lie groupoid by a submersion,  $\pi^{-1}(G(\CU))$ is a Lie groupoid.
Since the map in proposition \ref{super} is a homeomorphism, we can use it to transport the smooth structure on $\pi^{-1}(G(\CU))$ to $G(\pi^{-1}\CU)$.  
Since the quotient map $Q$ onto  $G(\CU)$ is a submersion, it follows that the map $Id\times Q\times Id$ given in eq. \eqref{map:pullbackatlas} is a submersion too. 
The commutativity of diagram \eqref{diag:iQi} implies that  $Q_{\pi}$ is a submersion  onto $G(\pi^{-1}\CU)$ endowed with the above smooth structure. The uniqueness in definition  \ref{def:smoothgr} finishes the argument.}

(ii)  {Since $\varphi$ is a homeomorphism by lemma \ref{lem:adapt} (ii), we can use it to transport the smooth structure on $G(\CU_2)$ to $G(\CU_1)$. 
We will show that the map $\widehat{\varphi}$ introduced just after lemma \ref{lem:adapt} is a submersion. Then the commutativity of diagram \eqref{diag:cont1} implies that  $Q_1$ is a submersion onto  $G(\CU_1)$ endowed with the above smooth structure. The uniqueness in definition  \ref{def:smoothgr} finishes the argument.
}

{
We now prove that $\widehat{\varphi}$ is a submersion. Let $u$ be a point in a bisubmersion $U \in \CU_1$. By definition  $\widehat{\varphi}(u)=Q_2(f(u))$,
  where $f$ is any  morphism of bisubmersions from a neighbourhood of $u$ to some bisubmersion  $V\in \CU_2$, and $Q_2 \colon V\to  G(\CU_2)$ is the projection map. There exists a morphism of bisubmersions $g\colon V'\to U_1$ (defined in a neighborhood $V'$ of $f(u)$) mapping $f(u)$ to $u$, by corollary \ref{carry} (ii). The following diagram commutes:}
   \begin{equation}
 \label{diag:contX}
\begin{tikzcd} U  \arrow[dr,swap,"\widehat{\varphi}"] & V' \arrow[l,swap, "g"] \arrow[d,"Q_2"]\\
  & G(\CU_2).
\end{tikzcd}
\end{equation}
{Indeed, for all $v\in V'$, we have $\widehat{\varphi}(g(v))=Q_2(f(g(v)))=Q_2(v)$, where the last equality holds because $f\circ g$ is a morphism of bisubmersions.
Since the derivative $d_{f(u)}Q_2$ is surjective by assumption, the commutativity of diagram \eqref{diag:contX} implies that the derivative $d_{u}\widehat{\varphi} \colon T_{u}U\to T_{\widehat{\varphi}(u)}G(\CU_2)$ is surjective. As $u$ was arbitrary, we conclude that $\widehat{\varphi}$ is a submersion.
 }
 \end{proof}

{The smooth version of theorem.  \ref{thm:pullbackgroid} is the following:}

\begin{prop}\label{prop:pullLie}
{Let $\pi\colon P\fto M$ be a surjective submersion with connected fibres and $\CF$ a foliation on $M$. If $H(\CF)$ is smooth then the map $\varphi\colon H(\pi^{-1}(\CF))
\cong\pi^{-1}(H(\CF))$ given in theorem. \ref{thm:pullbackgroid} {is an isomorphism of} \emph{Lie groupoids}.}
\end{prop}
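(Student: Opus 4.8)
The plan is to re-run the proof of Theorem~\ref{thm:pullbackgroid}, this time upgrading each of its two constituent isomorphisms of topological groupoids to an isomorphism of Lie groupoids by invoking the corresponding part of Lemma~\ref{lem:smoothtwo}. Fix a path holonomy atlas $\CU$ for $\CF$, so that $H(\CF)=G(\CU)$, and recall that $\varphi$ is realised as the composite
$$H(\pi^{-1}(\CF))\;\cong\;G(\pi^{-1}(\CU))\;\cong\;\pi^{-1}(H(\CF)),$$
where the right-hand isomorphism is the one from Proposition~\ref{super} (applied with $H(\CF)=G(\CU)$) and the left-hand one is from Proposition~\ref{prop:injmap}.

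First I would treat the right-hand isomorphism. Since $H(\CF)=G(\CU)$ is smooth by hypothesis, Lemma~\ref{lem:smoothtwo}(i) applies directly and yields both that $G(\pi^{-1}(\CU))$ is smooth and that the map $G(\pi^{-1}(\CU))\cong\pi^{-1}(G(\CU))=\pi^{-1}(H(\CF))$ of Proposition~\ref{super} is an isomorphism of Lie groupoids. The point on which this rests --- that $\pi^{-1}(H(\CF))$ is automatically a Lie groupoid, being the pullback of a Lie groupoid along a surjective submersion --- is exactly what is used inside the proof of Lemma~\ref{lem:smoothtwo}(i), so nothing new is required here.

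Next I would treat the left-hand isomorphism, applying Lemma~\ref{lem:smoothtwo}(ii) with $\CU_1$ a path holonomy atlas for $\pi^{-1}(\CF)$ and $\CU_2:=\pi^{-1}(\CU)$. The three hypotheses of that lemma are in place: $\CU_1$ is adapted to $\CU_2$ because a path holonomy atlas is adapted to any atlas (Corollary~\ref{cor:phatlas}(i), via Corollary~\ref{cor:adapted}); the induced morphism $\varphi\colon H(\pi^{-1}(\CF))=G(\CU_1)\to G(\CU_2)$ is surjective, which is precisely the content established in the proof of Proposition~\ref{prop:injmap}; and $G(\CU_2)=G(\pi^{-1}(\CU))$ is smooth by the previous step. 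Hence Lemma~\ref{lem:smoothtwo}(ii) gives that $H(\pi^{-1}(\CF))$ is smooth and that this first isomorphism is one of Lie groupoids.

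Finally, $\varphi$ is the composition of two isomorphisms of Lie groupoids, hence is itself one, as claimed. The only subtlety worth flagging is the order in which the two steps must be carried out: Lemma~\ref{lem:smoothtwo}(ii) takes the smoothness of its target groupoid $G(\CU_2)$ as an \emph{input}, so the smoothness of $G(\pi^{-1}(\CU))$ coming from part~(i) must be secured first. Beyond getting this order right and matching the surjectivity hypothesis of part~(ii) to the statement already proven in Proposition~\ref{prop:injmap}, no genuinely new argument is needed --- the analytic work was done once and for all in Lemma~\ref{lem:smoothtwo}.
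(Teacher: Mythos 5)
Your proposal is correct and matches the paper's own proof essentially verbatim: the paper also factors $\varphi$ through $G(\pi^{-1}(\CU))$, applies Lemma \ref{lem:smoothtwo}(i) to the map onto $\pi^{-1}(H(\CF))$, and then applies Lemma \ref{lem:smoothtwo}(ii) with $\CU_1$ a path holonomy atlas of $\pi^{-1}\CF$ and $\CU_2=\pi^{-1}\CU$, using the surjectivity from Proposition \ref{prop:injmap}. Your remark about the order of the two steps is a correct reading of the dependency that the paper leaves implicit.
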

\begin{proof}
{Let $\CU$ be a path holonomy atlas   of $\cF$. 
We check that the composition
$$H(\pi^{-1}(\CF))\cong G(\pi^{-1}(\CU))\cong \pi^{-1}(H(\CF))$$
appearing in the proof of theorem.  \ref{thm:pullbackgroid} is a composition of Lie groupoid isomorphisms.}

{
The second map is    a Lie groupoid isomorphisms, by lemma \ref{lem:smoothtwo} (i).}

{
The first map is    a Lie groupoid isomorphism: apply lemma \ref{lem:smoothtwo} (ii)
to  $\CU_2:=\pi^{-1}\CU$ (which 
an atlas of bisubmersion for $\pi^{-1}\CF$ by lemma  \ref{lem:pullbackatlas}), to a  path holonomy atlas  $\CU_1$  of $\pi^{-1}\cF$, and use that $\varphi$ is surjective (see  proposition \ref{prop:injmap}).}
\end{proof}

The holonomy groupoid of a foliated manifold $(M,\cF)$ is not always smooth, but by results of Claire Debord   \cite{Debord2013}, for any point $x\in M$ there is a smooth structure on the restriction of the holonomy {groupoid} to a leaf $L$ through $x$, {making it a Lie groupoid} (and  consequently on the isotropy group at $x$, {making it a Lie group}). More precisely, following  {\cite[Def. 2.8]{AZ1}, there exists a smooth structure on $H(\cF)_L$ -- the restriction of the holonomy groupoid  the leaf -- 
such that for any  path holonomy atlas $\{U_i\}$ for $\CF$,
the quotient map $Q_L\colon \sqcup_i (U_{i})_L \fto H(\cF)_L$ is a submersion.}

\begin{prop}\label{isotropull}
Let $\pi\colon P\fto M$ be a surjective submersion with connected fibres, $\CF$ be a foliation on $M$ and $\CU$ a path holonomy atlas for $\CF$.
{The map $\varphi\colon H(\pi^{-1}(\CF)) \cong\pi^{-1}(H(\CF))$ {of theorem  \ref{thm:pullbackgroid}} restricts to the following isomorphisms of Lie groupoids:}
\begin{enumerate}
\item[(i)] $(H(\pi^{-1}(\CF)))_{\widehat{L}}\cong (\pi^{-1}(H({\CF})))_{\widehat{L}}$, {for the restrictions to any leaf $\widehat{L}\subset P$,}
\item[(ii)] $(H(\pi^{-1}(\CF)))_p\cong (\pi^{-1}(H(\CF)))_p$, for the isotropy Lie groups at any $p\in P$.
\end{enumerate} 
\end{prop}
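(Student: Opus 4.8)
The plan is to deduce both statements from the factorization of $\varphi$ used in the proof of Theorem \ref{thm:pullbackgroid}, combined with Debord's smooth structures on leaf-restrictions, and then to obtain (ii) as a formal consequence of (i). First I would fix a leaf $\widehat{L}$ of $\pi^{-1}(\CF)$ and set $L:=\pi(\widehat{L})$, a leaf of $\CF$. Since $\pi$ has connected fibres, $\widehat{L}=\pi^{-1}(L)$ and the restriction $\pi|_{\widehat{L}}\colon \widehat{L}\to L$ is a surjective submersion with connected fibres (its fibres being the $\pi$-fibres); connectedness of $\widehat{L}$ follows from Lemma \ref{lem:subcon} with $k=0$. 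Unwinding Definition \ref{def:pullbackgroid}, an arrow $(p,g,q)$ of $\pi^{-1}(H(\CF))$ satisfies $p,q\in\widehat{L}$ if and only if $g\in H(\CF)_L$, so
$$ (\pi^{-1}(H(\CF)))_{\widehat{L}} \;=\; (\pi|_{\widehat{L}})^{-1}\big(H(\CF)_L\big). $$
By Debord's theorem $H(\CF)_L$ is a Lie groupoid, hence this pullback is a Lie groupoid by Definition \ref{def:pullbackgroid}. Because $\varphi$ is an isomorphism of topological groupoids covering the identity on $P$, it carries the arrows with source and target in $\widehat{L}$ bijectively to themselves, so it restricts to an isomorphism of topological groupoids $(H(\pi^{-1}(\CF)))_{\widehat{L}}\to (\pi^{-1}(H(\CF)))_{\widehat{L}}$; the content of (i) is that this restriction is smooth.

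To prove smoothness I would restrict the whole factorization $H(\pi^{-1}(\CF))\cong G(\pi^{-1}(\CU))\cong \pi^{-1}(H(\CF))$ to $\widehat{L}$ and check that each arrow is a Lie groupoid isomorphism, following the pattern of Proposition \ref{prop:pullLie} but with Debord's smooth structures in place of global smoothness. Concretely, I would establish leaf-restricted analogues of Lemma \ref{lem:smoothtwo}(i) and (ii). The proofs there rest only on Lemma \ref{lem:openmap} (openness of quotient maps) and on the characterization of the smooth structure by the submersivity of the quotient map (Definition \ref{def:smoothgr}). Restricting a bisubmersion $U$ to $U_L:=\bs^{-1}(L)=\bt^{-1}(L)$, every morphism of bisubmersions restricts to a morphism of the leaf-restrictions, so Lemma \ref{lem:openmap} yields openness of the restricted quotient maps, while Debord's result supplies exactly the submersion property $Q_L\colon \sqcup_i (U_i)_L \to H(\CF)_L$ that plays the role of Definition \ref{def:smoothgr}. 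With these in hand, the same submersion-lifting arguments as in Lemma \ref{lem:smoothtwo} transport the Lie groupoid structure across both restricted isomorphisms, and uniqueness of the smooth structure guarantees that the transported structure is the correct one.

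Finally, (ii) is immediate from (i): the isotropy group $(H(\pi^{-1}(\CF)))_p$ is the isotropy group at $p$ of the Lie groupoid $(H(\pi^{-1}(\CF)))_{\widehat{L}}$, for $\widehat{L}$ the leaf through $p$, namely $\bs^{-1}(p)\cap\bt^{-1}(p)$; an isomorphism of Lie groupoids over the identity carries this embedded Lie subgroup isomorphically onto its counterpart. The main obstacle I anticipate is the bookkeeping in the middle paragraph: verifying carefully that restriction to a leaf genuinely preserves the openness and submersivity properties underpinning Lemma \ref{lem:smoothtwo}, and that the abstractly-defined Debord smooth structure on $(H(\pi^{-1}(\CF)))_{\widehat{L}}$ coincides with the one transported through the factorization. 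Since $\pi^{-1}(\CU)$ is only an atlas and not a path holonomy atlas, this matching must be routed through a genuine path holonomy atlas of $\pi^{-1}(\CF)$, exactly as in Proposition \ref{prop:injmap}, and it is this comparison step that I expect to require the most care.
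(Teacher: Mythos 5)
Your proposal is correct and follows essentially the same route as the paper: identify $(\pi^{-1}(H(\CF)))_{\widehat{L}}$ with the pullback of Debord's Lie groupoid $H(\CF)_L$ along $\pi|_{\widehat{L}}$, note that $Q_L$ being a submersion makes $Id\times Q_L\times Id$ a submersion on the leaf-restricted pullback atlas, and then rerun the arguments of Lemma \ref{lem:smoothtwo} and Proposition \ref{prop:pullLie} over $\widehat{L}$ instead of $P$, with (ii) as an immediate consequence of (i). The ``comparison step'' you flag as delicate (routing through a path holonomy atlas of $\pi^{-1}(\CF)$ as in Proposition \ref{prop:injmap}) is exactly what the paper's citation of the proof of Proposition \ref{prop:pullLie} accomplishes.
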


\begin{rem}
{There is canonical isomorphism of Lie groups} 
$(\pi^{-1}(H(\CF)))_p\cong H(\CF)_{\pi(p)}$.
\end{rem}
\begin{proof}
{We prove only (i), since (ii) is a direct consequence. 
Any leaf in $P$ is of the form $\widehat{L} =\pi^{-1}(L)$ for some leaf $L$ in $M$. We have $(\pi^{-1}(H(\cF)))_{\widehat{L}}= \widehat{L}{}_{\pi}\!\times_{\bt} (H(\cF)_{L}) {}_{\bs}\!\times_{\pi} \widehat{L}_{\pi}=
\pi^{-1}(H(F)_L)$}. Take a path holonomy atlas  $\CU=\{U_i\}_{i\in I}$ for $\CF$, note that $(\pi^{-1} U_i)_{\widehat{L}}=\widehat{L}{}_{\pi}\!\times_{\bt} ((U_i)_L) {}_{\bs}\!\times_{\pi} \widehat{L}_{\pi}$. The map $Q_L\colon \sqcup_i (U_{i})_L \fto H(\cF)_L$ is a submersion, by the above definition of smooth structure on $ H(\cF)_L$, therefore the map $$Id\times Q_L\times Id\colon \sqcup_i (\pi^{-1}U_{i})_{\widehat{L}} \fto (\pi^{-1}H(\cF))_{\widehat{L}}$$ is a submersion.

{This allows us to apply the arguments of the proof of lemma \ref{lem:smoothtwo}   to groupoids over $\widehat{L}$ (rather than over $P$). The proof of proposition \ref{prop:pullLie} delivers the desired conclusion.}
\end{proof}

\subsection{Morita equivalence for open {topological} groupoids}\label{subsec:hol3}

{The holonomy groupoid of a singular foliation (see definition \ref{def:holgroidph}) is not a Lie groupoid in general, but just an open topological groupoid. {After discussing Morita equivalence for open topological groupoids, we state the main result of the paper, theorem \ref{thm:MEfolgroids}.}

\begin{defi}
A topological groupoid is {\bf open} if its source and target maps are open maps.
\end{defi}

\begin{prop} 
The holonomy groupoid of a foliated manifold $(M,\CF)$ is an open topological groupoid.
\end{prop}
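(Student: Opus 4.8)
The goal is to show that the holonomy groupoid $H(\CF)$, endowed with the quotient topology, has open source and target maps. The plan is to reduce the statement to the already-established openness of the quotient map $Q$ from \emph{lemma \ref{lem:openmap}}, combined with the fact that the source and target maps on each bisubmersion are submersions (hence open).

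First I would fix a path holonomy atlas $\CU=\{U_i\}_{i\in I}$ for $\CF$, so that $H(\CF)=G(\CU)$ by definition \ref{def:holgroidph}, and recall the quotient map $Q\colon \sqcup_i U_i \fto H(\CF)$, which is open by \emph{lemma \ref{lem:openmap}}. By \emph{proposition \ref{prop:groidatlas} (ii)} the source map $\bs\colon H(\CF)\fto M$ satisfies $\bs\circ Q = \sqcup_i \bs_i$, where each $\bs_i\colon U_i\fto M$ is the source submersion of the bisubmersion $U_i$. The key point is that each $\bs_i$, being a submersion, is an open map, so the disjoint union $\sqcup_i\bs_i$ is open as well.

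The main step is then a general topological observation: if $q\colon X\fto Y$ is a surjective open map and $h\colon X\fto Z$ is a continuous open map that descends to a map $\bar h\colon Y\fto Z$ (i.e. $\bar h\circ q = h$), then $\bar h$ is open. Indeed, for an open set $A\subset Y$, the preimage $q^{-1}(A)$ is open in $X$ since $q$ is continuous, hence $h(q^{-1}(A))$ is open in $Z$; and because $q$ is surjective one has $\bar h(A)=\bar h(q(q^{-1}(A)))=h(q^{-1}(A))$, which is open. Applying this with $q=Q$, $h=\sqcup_i\bs_i$ and $\bar h=\bs$ shows that $\bs$ is open. The identical argument with $\bt\circ Q=\sqcup_i\bt_i$ shows that $\bt$ is open.

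I do not expect any genuine obstacle here: the only thing to be careful about is that the factorization $\bs\circ Q=\sqcup_i\bs_i$ holds on the nose (which is exactly \emph{proposition \ref{prop:groidatlas} (ii)}) and that $Q$ is surjective onto $H(\CF)$, so that $\bar h(A)=h(q^{-1}(A))$ is a genuine equality rather than merely an inclusion. Both are immediate from the construction of the groupoid of an atlas. Hence $H(\CF)$ is an open topological groupoid.
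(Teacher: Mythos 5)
Your proof is correct and follows essentially the same route as the paper: factor the source map through the quotient map $Q$ of a path holonomy atlas and push open sets forward along the submersions $\bs_i$. The only (harmless) redundancy is your invocation of the openness of $Q$ from lemma \ref{lem:openmap} — as your own argument shows, only the continuity and surjectivity of $Q$ are actually used, which is exactly what the paper's proof relies on.
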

\begin{proof}
Take a path holonomy atlas $\CU$ and denote by $Q\colon \sqcup_{U\in \CU} U\fto H(\CF)$ the quotient map. 
The following diagram commutes, {where we denote by $\bs_H$ the source map of the holonomy groupoid}:
\[\begin{tikzcd}
\sqcup_{U\in \CU} U\arrow[dr," {\bs}"] \arrow[d,swap,"Q"] & \\
H(\CF) \arrow [r,swap,"\bs_H"] & M 
\end{tikzcd}\]
{Recall from \S \ref{sec:holconstr} that $H(\CF)$ is endowed with the quotient topology}.
Using that $Q$ is continuous {and surjective}, and that $ {\bs}$ is a submersion and therefore   an open map, it  follows that $\bs_H$ is open map. A similar argument can be used for $\bt_H$.
\end{proof}

{The proof of the following lemma can be found in \cite[lemma 2.25]{TuNonHaus}.} 
\begin{lem}\label{lem:openmapABC} Let $A,B,C$ be topological spaces and $f\colon A\fto C$ {a continuous map}. If $g\colon B\fto C$ is a continuous and \emph{open} map, then {$\text{Pr}_1\colon A {}_f\!\times_g B\fto A$} is also a {continuous and \emph{open} map}, where the domain is endowed with the subspace topology.
	
{Moreover, if {$g$} is surjective then {$\text{Pr}_1$} also is}.
\[\begin{tikzcd}
A {}_f\!\times_g B \arrow[r,"\Pr_2"] \arrow[d,swap, "\Pr_1"]& B \arrow[d, "g"]  \\
  A \arrow[r, "f"] & C\\
\end{tikzcd}\]

\end{lem}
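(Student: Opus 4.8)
The plan is to verify the three assertions in turn, with only the openness of $\text{Pr}_1$ requiring an actual idea. Continuity is immediate: $\text{Pr}_1$ is the restriction to the subspace $A\,{}_f\!\times_g B\subset A\times B$ of the continuous projection $A\times B\fto A$. For surjectivity under the extra hypothesis that $g$ is onto, given any $a\in A$ one uses surjectivity of $g$ to find $b\in B$ with $g(b)=f(a)$; then $(a,b)$ lies in $A\,{}_f\!\times_g B$ and projects to $a$, so every point of $A$ is hit.

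The heart of the argument is openness, and here I would reduce to basic open sets. A basis for the subspace topology on $A\,{}_f\!\times_g B$ is given by the sets $(U\times V)\cap(A\,{}_f\!\times_g B)$ with $U\subset A$ and $V\subset B$ open. Since images commute with unions and every open subset is a union of such basic sets, it suffices to show that $\text{Pr}_1$ carries each basic open set to an open subset of $A$. The key computation, which rewrites the existence of a suitable partner $b\in V$ as a membership condition on $f(a)$, is
$$\text{Pr}_1\big((U\times V)\cap(A\,{}_f\!\times_g B)\big)=\{a\in U\st f(a)\in g(V)\}=U\cap f^{-1}(g(V)).$$
Now $g(V)$ is open in $C$ because $g$ is an open map, hence $f^{-1}(g(V))$ is open in $A$ by continuity of $f$, and its intersection with the open set $U$ is therefore open in $A$. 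Passing to arbitrary unions concludes that $\text{Pr}_1$ is open.

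There is no genuine obstacle in this lemma; the only step that uses the structure of the fibre product in an essential way is the displayed identity, where the defining equation $f(a)=g(b)$ lets one replace ``there exists $b\in V$ with $f(a)=g(b)$'' by ``$f(a)\in g(V)$''. It is precisely at this point that both the continuity of $f$ and the openness of $g$ enter, and it is worth noting that this is the only place where the openness hypothesis on $g$ is used.
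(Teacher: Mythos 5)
Your proof is correct and complete: continuity by restriction of the product projection, surjectivity by lifting $f(a)$ through $g$, and openness via the identity $\mathrm{Pr}_1\big((U\times V)\cap(A\,{}_f\!\times_g B)\big)=U\cap f^{-1}(g(V))$ together with the fact that images commute with unions. The paper itself gives no proof, deferring to \cite[lemma 2.25]{TuNonHaus}; your argument is the standard one and there is nothing to object to.
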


\begin{prop} 
If $G\soutar M$ is an open topological groupoid and $\pi\colon P\fto M$ is a continuous open and surjective map, then $\pi^{-1} G$ is an open topological groupoid.
\end{prop}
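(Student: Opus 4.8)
The plan is to reduce everything to Lemma \ref{lem:openmapABC}, feeding it the openness of the source and target maps of $G$ together with the openness and surjectivity of $\pi$. Recall from Def. \ref{def:pullbackgroid} that the space of arrows is $\pi^{-1}G = P {}_\pi\!\times_\bt G {}_\bs\!\times_\pi P$, an element being a triple $(p,g,q)$ with $\pi(p)=\bt(g)$ and $\bs(g)=\pi(q)$, whose source and target are the projections $\text{Pr}_1(p,g,q)=p$ and $\text{Pr}_3(p,g,q)=q$. The groupoid structure maps are the obvious ones induced from $G$ and are continuous, so the only thing left to check is that $\text{Pr}_1$ and $\text{Pr}_3$ are open; by the evident left-right symmetry it suffices to treat $\text{Pr}_1$.

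First I would exhibit $\pi^{-1}G$ as an iterated fibre product. Set $R := G {}_\bs\!\times_\pi P$, with $\text{Pr}_G\colon R\to G$ the projection. Applying Lemma \ref{lem:openmapABC} with $A=G$, $f=\bs$, $B=P$, $g=\pi$ (continuous, open and surjective by hypothesis), we obtain that $\text{Pr}_G$ is continuous, open and surjective. Define $h\colon R\to M$ by $h(g,q):=\bt(g)$, that is, $h=\bt\circ\text{Pr}_G$. Since $G$ is an open topological groupoid, $\bt$ is open; it is also continuous and surjective, as every $m\in M$ is the target of the unit $1_m$. Hence $h$, being a composition of continuous open surjective maps, is itself continuous, open and surjective.

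Next I would use the canonical identification $\pi^{-1}G \cong P {}_\pi\!\times_h R$, $(p,g,q)\leftrightarrow (p,(g,q))$, which is a homeomorphism for the subspace topologies and under which the source map $\text{Pr}_1$ corresponds to the projection onto the first factor $P$. A final application of Lemma \ref{lem:openmapABC}, now with $A=P$, $f=\pi$, $B=R$, $g=h$ and $C=M$, using that $h$ is continuous, open and surjective, shows that this projection, and therefore the source map of $\pi^{-1}G$, is continuous, open and surjective. The target map is handled by the mirror-image decomposition $\pi^{-1}G\cong R' {}_{h'}\!\times_\pi P$ with $R':=P {}_\pi\!\times_\bt G$ and $h'(p,g):=\bs(g)$, exchanging the roles of $\bt$ and $\bs$.

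I expect no genuine difficulty here; the one thing to get right is the bookkeeping of the nested fibre products, namely choosing at each stage which map plays the role of the open map $g$ in Lemma \ref{lem:openmapABC} so that its hypotheses (openness, and surjectivity where it is needed) are satisfied. The two essential inputs are exactly the places where openness enters: the openness of $\pi$, used to make the projections $\text{Pr}_G$ open, and the openness of $\bt$ and $\bs$ of the groupoid $G$, used to make $h$ and $h'$ open.
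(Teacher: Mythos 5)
Your proof is correct and is essentially the paper's own argument: two applications of Lemma \ref{lem:openmapABC}, first to $G\,{}_\bs\!\times_\pi P$ using the openness of $\pi$, then to the outer fibre product over the open map $\bt\circ\mathrm{Pr}_G$ (the paper even uses the same decomposition, writing $\pi^{-1}G = P\,{}_\pi\!\times_{\bt\circ\mathrm{Pr}_1}(G\,{}_\bs\!\times_\pi P)$), with the other structure map handled by symmetry. The only discrepancy is that you label $\mathrm{Pr}_1$ as the source where the paper's multiplication convention makes it the target, but since both projections must be shown open and the argument is symmetric, this is immaterial.
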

\begin{proof}
 {We show that the target map of $\pi^{-1} G$ is open. 
The first projection $\text{Pr}_1$ of
 $G {}_\bs\!\times_\pi P$ is open by lemma \ref{lem:openmapABC}, since $\pi\colon P \to M$ is open (being a submersion). Hence the composition $\bt\circ\text{Pr}_1$ is  open. Again by lemma \ref{lem:openmapABC}, this implies that the first projection of 
 $P {}_\pi \!\times_{\bt\circ\text{Pr}_1}\left(G {}_\bs\!\times_\pi P\right)= \pi^{-1} G$ is  open, and this is precisely the target map of $\pi^{-1} G$.} For the source map, proceed similarly.
\end{proof}

{Morita equivalence of topological groupoids can be defined in terms of weak equivalences as in \cite[\S 1.3]{DPronk}. We argue in the appendix (see proposition \ref{prop:AllME} and remark \ref{rem:topopenME}) that for open topological groupoids, Morita equivalence can be alternatively be stated as follows.}  

\begin{defi}\label{def:MEtopgroid}
Two open topological groupoids $G\rightrightarrows M$ and $H\rightrightarrows N$ are \textbf{Morita equivalent} if   there exists a topological space $P$, and two surjective open maps $\pi_M\colon P\fto M$ and $\pi_N\colon P\fto N$ such that $\pi^{-1}_M G\cong \pi_N^{-1}H$ as topological groupoids.

In this case we call $(P,\pi_M,\pi_N)$ a {\bf Morita equivalence} between $G$ and $H$.
\end{defi}

{We can finally state the main result of the paper:}

 \begin{thm}\label{thm:MEfolgroids}
Hausdorff Morita equivalent singular foliations have holonomy groupoids which are Morita equivalent as open topological groupoids.
\end{thm}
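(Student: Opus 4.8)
The plan is to take the very same manifold $P$ that witnesses the Hausdorff Morita equivalence of the foliations and show that it also witnesses the Morita equivalence of the holonomy groupoids, in the sense of Definition \ref{def:MEtopgroid}. Concretely, by Definition \ref{def:defMEfol} we are given surjective submersions with connected fibres $\pi_M\colon P\to M$ and $\pi_N\colon P\to N$ satisfying $\pi_M^{-1}(\cF_M)=\pi_N^{-1}(\cF_N)$, and I would propose $(P,\pi_M,\pi_N)$ as the candidate Morita equivalence between the open topological groupoids $H(\cF_M)\rightrightarrows M$ and $H(\cF_N)\rightrightarrows N$.

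First I would record that $\pi_M$ and $\pi_N$ are surjective open maps: a submersion is an open map, and surjectivity holds by hypothesis. Thus the maps are exactly of the type required by Definition \ref{def:MEtopgroid}, and the only remaining task is to produce an isomorphism of topological groupoids $\pi_M^{-1}(H(\cF_M))\cong\pi_N^{-1}(H(\cF_N))$.

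The key step is Theorem \ref{thm:pullbackgroid}. Since $\pi_M$ and $\pi_N$ are surjective submersions with connected fibres, it yields canonical isomorphisms of topological groupoids $\pi_M^{-1}(H(\cF_M))\cong H(\pi_M^{-1}(\cF_M))$ and $\pi_N^{-1}(H(\cF_N))\cong H(\pi_N^{-1}(\cF_N))$. Now the defining property of the foliation Morita equivalence, $\pi_M^{-1}(\cF_M)=\pi_N^{-1}(\cF_N)$, forces the two pulled-back foliations to be literally equal, hence their holonomy groupoids coincide: $H(\pi_M^{-1}(\cF_M))=H(\pi_N^{-1}(\cF_N))$. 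Composing the two isomorphisms through this equality gives
$$\pi_M^{-1}(H(\cF_M))\cong H(\pi_M^{-1}(\cF_M))=H(\pi_N^{-1}(\cF_N))\cong \pi_N^{-1}(H(\cF_N)),$$
which is precisely the isomorphism needed to conclude via Definition \ref{def:MEtopgroid}.

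I do not expect a genuinely hard step in the statement itself: the entire analytic content has already been isolated in Theorem \ref{thm:pullbackgroid}, whose proof (via proposition \ref{super} and proposition \ref{prop:injmap}) is where the real work takes place — matching a path holonomy atlas of the pullback foliation with the pullback atlas, and checking surjectivity onto a symmetric neighbourhood of the identities. Consequently, the only points requiring care here are bookkeeping ones: confirming that the isomorphisms furnished by Theorem \ref{thm:pullbackgroid} are isomorphisms of the same structured objects (open topological groupoids) appearing in Definition \ref{def:MEtopgroid}, and emphasising that the equality $\pi_M^{-1}(\cF_M)=\pi_N^{-1}(\cF_N)$ identifies the two holonomy groupoids \emph{on the nose} rather than merely up to isomorphism, so that the chain of identifications above is unambiguous.
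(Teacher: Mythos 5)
Your proposal is correct and is essentially identical to the paper's own (very short) proof: the paper also takes the same $P$, applies Theorem \ref{thm:pullbackgroid} once to each of $\pi_M$ and $\pi_N$, uses the equality $\pi_M^{-1}(\cF_M)=\pi_N^{-1}(\cF_N)$ to identify the two holonomy groupoids on the nose, and notes that submersions are open maps so that Definition \ref{def:MEtopgroid} applies. Your write-up just makes explicit the bookkeeping that the paper leaves to the reader.
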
 
\begin{proof}
{Apply twice theorem \ref{thm:pullbackgroid}, noticing that submersions are open maps.}
\end{proof}

{Applying proposition \ref{prop:pullLie}
we can specialize the above result to projective (for example, regular) foliations:
 if two projective foliations are Hausdorff Morita equivalent, then their holonomy groupoids  are Morita equivalent as Lie groupoids. Combining   with  proposition 
\ref{prop:MEholtoMEfol} we obtain:

 \begin{prop}\label{prop:equivproj}
Provided  their holonomy groupoids are Hausdorff, two projective  foliations  are  
Hausdorff Morita equivalent  if{f} their holonomy groupoids   are Morita equivalent as  Lie groupoids. 
 \end{prop}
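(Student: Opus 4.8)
The plan is to prove the two implications separately. The ``if'' direction is already in hand: it is \emph{precisely} Proposition~\ref{prop:MEholtoMEfol}, which asserts that for projective foliations with Hausdorff holonomy groupoids, Morita equivalence of the holonomy groupoids (as Lie groupoids) implies Hausdorff Morita equivalence of the foliations. So no further work is needed there, and essentially all the content of the proposition lies in the ``only if'' direction.

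For the ``only if'' direction, I would start from a Hausdorff Morita equivalence of $(M,\cF_M)$ and $(N,\cF_N)$, witnessed by a manifold $P$ and surjective submersions with connected fibres $\pi_M\colon P\to M$ and $\pi_N\colon P\to N$ with $\pi_M^{-1}\cF_M=\pi_N^{-1}\cF_N$. First I would record that, since $\cF_M$ and $\cF_N$ are projective, their holonomy groupoids are Lie groupoids by \cite{AZ5}, and they are Hausdorff by hypothesis; since $P$ is a (Hausdorff) manifold and $\pi_M,\pi_N$ are surjective submersions, the pullback groupoids $\pi_M^{-1}(H(\cF_M))$ and $\pi_N^{-1}(H(\cF_N))$ are again Hausdorff Lie groupoids, by Definition~\ref{def:pullbackgroid}. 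The key step is then to invoke Proposition~\ref{prop:pullLie} twice, its connected-fibre hypothesis being built into Definition~\ref{def:defMEfol} and its smoothness hypothesis being guaranteed by projectivity; this yields isomorphisms of \emph{Lie} groupoids
\[
\pi_M^{-1}(H(\cF_M))\;\cong\;H(\pi_M^{-1}\cF_M),\qquad \pi_N^{-1}(H(\cF_N))\;\cong\;H(\pi_N^{-1}\cF_N).
\]
Because $\pi_M^{-1}\cF_M=\pi_N^{-1}\cF_N$ as foliations on $P$, the right-hand sides are one and the same Lie groupoid, so composing gives an isomorphism of Lie groupoids $\pi_M^{-1}(H(\cF_M))\cong\pi_N^{-1}(H(\cF_N))$. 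By Definition~\ref{def:MEgroid}, the triple $(P,\pi_M,\pi_N)$ is then a Morita equivalence of Lie groupoids between $H(\cF_M)$ and $H(\cF_N)$, which completes this direction.

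I do not expect a genuine obstacle, since both directions are repackagings of results already established (Propositions~\ref{prop:pullLie} and~\ref{prop:MEholtoMEfol}). The only points demanding care are bookkeeping ones. One must ensure the pullback groupoids are honest \emph{Hausdorff Lie} groupoids: this is exactly where the Hausdorffness assumption on $H(\cF_M)$ and $H(\cF_N)$ is used, together with $P$ being a manifold. One must also ensure that the isomorphism $\varphi$ of Theorem~\ref{thm:pullbackgroid} is upgraded from an isomorphism of topological groupoids to one of Lie groupoids, which is precisely the output of Proposition~\ref{prop:pullLie} and the reason its smoothness hypothesis has to be checked. Finally, one should confirm that Definition~\ref{def:MEgroid} requires only a Hausdorff manifold $P$ with surjective submersions, which is exactly the data furnished by the Hausdorff Morita equivalence.
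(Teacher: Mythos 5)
Your proof is correct and follows essentially the same route as the paper: the ``if'' direction is Proposition~\ref{prop:MEholtoMEfol}, and the ``only if'' direction is obtained by applying Proposition~\ref{prop:pullLie} twice to the witnessing submersions $\pi_M,\pi_N$ and composing the resulting Lie groupoid isomorphisms, exactly as the paper does when it specializes Theorem~\ref{thm:MEfolgroids} to projective foliations. The only cosmetic difference is that the Hausdorffness hypothesis is really only needed for the ``if'' direction (via Proposition~\ref{prop:MEholtoMEfol}); for the ``only if'' direction smoothness of the holonomy groupoids, guaranteed by projectivity, suffices.
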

}

 \subsection{Holonomy transformations}\label{subsec:holtrafo}

{Given a regular foliation, a classical construction associates to every path in a leaf its holonomy (a germ of diffeomorphism between slices transverse to the foliation). We review the extension of this construction to singular foliations
\cite[\S 2]{AZ2} and show that it is invariant under   pullbacks.}

\begin{defi}\label{holfrom}
Let $(M,\cF)$ be a singular foliation, and $x,y\in M$ lying in the same leaf.  
Fix a transversal $S_{x}$ at $x$, as well as a transversal $S_y$ at $y$. A \emph{holonomy transformation  from $x$ to $y$} is an element of 
$$ \frac{GermAut_{\cF}(S_x, S_y)}{exp(I_x \cF)|_{S_x}}.$$ 
Here $GermAut_{\cF}(S_x,S_y)$ is the space of germs at $x$ of locally defined diffeomorphisms preserving $\cF$   mapping $S_x$ to $S_y$, restricted to $S_x$.
Further $exp(I_x \cF)|_{S_x}$
is the space of germs at $x$ of time-one flows of time-dependent vector fields in $I_x \cF$ mapping $S_x$ to itself, restricted to $S_x$.
\end{defi}

Holonomy transformations are relevant because the holonomy groupoid maps canonically into them \cite[theorem. 2.7]{AZ2}.

\begin{thm}\label{globalaction} Let $x, y \in (M,\cF)$ be points in the same leaf $L$, and fix  transversals $S_x$ at $x$ and $S_y$ at $y$. 
Then there is a well defined map 
\begin{align}\label{Phixy}
\Phi^{\cF} \colon H(\cF)_x^y \rightarrow \frac{GermAut_{\cF}(S_x, S_y)}{exp(I_x \cF)|_{S_x}},\quad h \mapsto \langle \tau\rangle.   
\end{align}

Here $\tau$ is defined as follows, given $h \in H(\cF)_x^y{:=\bt^{-1}(y)\cap \bs^{-1}(x)}$:  
\begin{itemize}
\item take any bisubmersion $(U,\bt,\bs)$ in the path-holonomy atlas with a point $u\in U$ satisfying $[u]=h$,
\item  take any section $\bar{b} \colon S_x \to U$  of $\bs$ through $u$ {transverse to the $\bt$-fibers} such that $(\bt\circ \bar{b})(S_x)\subset S_y$, 
\end{itemize}
and define $\tau:=\bt\circ \bar{b} \colon S_x \to S_y$. 
\end{thm}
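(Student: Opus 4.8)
The statement asserts that $\Phi^{\cF}$ is \emph{well defined}, so the plan is to show that, for a given $h\in H(\cF)_x^y$, the object $\tau$ can be constructed and is an $\cF$-preserving diffeomorphism from $S_x$ into $S_y$, and then that its class $\langle\tau\rangle$ does not depend on the two choices involved: the representative $u$ of $h$, and the section $\bar b$. For existence, fix $u\in U$ with $\bs(u)=x$, $\bt(u)=y$ and $[u]=h$. Using the bisubmersion identities $d\bs(\ker_u d\bt)=F_x$ and $d\bt(\ker_u d\bs)=F_y$ together with the transversality $T_yS_y\oplus F_y=T_yM$, a tangent-space computation shows that $\bs$ restricts to a submersion $\bs^{-1}(S_x)\cap\bt^{-1}(S_y)\to S_x$ near $u$; a local section $\bar b$ of it through $u$ is, after shrinking $S_x$, transverse to the $\bt$-fibres, and $\tau:=\bt\circ\bar b\colon S_x\to S_y$ is a local diffeomorphism since $\dim S_x=\dim S_y$. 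Extending $\bar b$ to an $\bs$-section $\sigma$ on an open neighbourhood of $x$ (still transverse to $\bt$), the bisection $\sigma$ carries the local diffeomorphism $\phi:=\bt\circ\sigma$, which preserves $\cF$ by \cite[Prop.\ 2.10]{AndrSk}; as $\phi|_{S_x}=\tau$, this yields $\tau\in GermAut_{\cF}(S_x,S_y)$.

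Next I would prove that $\langle\tau\rangle$ is independent of $\bar b$ for a fixed $u$. Given two sections $\bar b_0,\bar b_1$ through $u$, extend them to bisections $\sigma_0,\sigma_1$ and connect these by a smooth family $\sigma_t$ of $\bs$-sections through $u$, all transverse to $\bt$; write $\phi_t:=\bt\circ\sigma_t$. Since $\bs\circ\sigma_t$ is independent of $t$, the variation $\partial_t\sigma_t$ is a section of $\ker d\bs$, and since $\sigma_t(x)=u$ for all $t$ it vanishes at $u$, hence is an $I_u$-linear combination of local sections of $\ker d\bs$. Because $\Gamma_c(\ker d\bs)\subset\bt^{-1}(\cF)$, $\bt$-relating this variation along the bisection produces the time-dependent vector field $Z_t$ generating the isotopy $\phi_t\circ\phi_0^{-1}$, and the vanishing at $u$ forces $Z_t\in I_y\cF$. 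Thus $\phi_1\circ\phi_0^{-1}$ is the time-one flow of a time-dependent vector field in $I_y\cF$; conjugating by $\phi_0$, which preserves $\cF$ and maps $x$ to $y$ so that $\phi_0^{*}(I_y\cF)=I_x\cF$, gives $\phi_0^{-1}\circ\phi_1\in exp(I_x\cF)$, and restricting to $S_x$ yields $\tau_0^{-1}\circ\tau_1\in exp(I_x\cF)|_{S_x}$, i.e. $\langle\tau_0\rangle=\langle\tau_1\rangle$. I expect this to be the main obstacle: it is precisely where the meaning of $exp(I_x\cF)|_{S_x}$ and the defining identity $\Gamma_c(\ker d\bs)\subset\bt^{-1}\cF$ must be combined carefully, in particular to pass from a section of $\ker d\bs$ vanishing at $u$ to a genuine element of $I_y\cF$ on the target.

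Finally I would remove the dependence on the representative $(U,u)$ of $h$. If $u\in U$ and $u'\in U'$ satisfy $[u]=[u']=h$, then by the description of the equivalence relation on a path-holonomy atlas (Prop.\ \ref{prop:groidatlas} and the remark following it, via Cor.\ \ref{carry}) there are bisections through $u$ and through $u'$ carrying one and the same local diffeomorphism of $M$. By the previous paragraph, the class $\langle\tau\rangle$ built from $(U,u)$ and the one built from $(U',u')$ both coincide with the class of this common diffeomorphism restricted to $S_x$ (after arranging, as in the existence step, that it maps $S_x$ into $S_y$). Hence $\Phi^{\cF}(h)=\langle\tau\rangle$ is independent of all choices, and the remaining verifications are either transversality arguments or formal consequences of the bisubmersion formalism recalled above.
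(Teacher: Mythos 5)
First, a remark on the ground truth: the paper does not prove this theorem at all — it is imported verbatim from Androulidakis--Zambon \cite[Thm.~2.7]{AZ2} (see the sentence preceding it), so there is no internal proof to compare against. Judged on its own terms, your existence step and your reduction of the choice of $(U,u)$ to the choice of $\bar b$ (via corollary \ref{carry}) are sound, but the central step — independence of $\langle\tau\rangle$ from the bisection — contains a genuine gap.

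You assume that two bisections $\sigma_0,\sigma_1$ through $u$ can be joined by a smooth family $\sigma_t$ of bisections through $u$, and you deduce that the carried diffeomorphisms of $M$ satisfy $\phi_0^{-1}\circ\phi_1\in exp(I_x\cF)$. Neither claim is true in general. Take $M=\RR$ with the full foliation $\cF=\CX_c(\RR)$, the path-holonomy bisubmersion $U\subset\RR\times\RR$ with $\bs(v,z)=z$, $\bt(v,z)=z+v$, and $u=(0,0)$; here $S_x=S_y=\{0\}$, so the constraint $(\bt\circ\bar b)(S_x)\subset S_y$ is vacuous. The constant bisection $z\mapsto(0,z)$ carries the identity, while $z\mapsto(-2z,z)$ is also a bisection through $u$ (its tangent $(-2,1)$ is transverse to $\ker d\bt$, which is spanned by $(1,-1)$) and carries $z\mapsto -z$. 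These two bisections lie in different connected components of the space of bisections through $u$: along any path the derivative $v'(0)$ must cross $-1$, where transversality to the $\bt$-fibres fails. Worse, $z\mapsto -z$ does \emph{not} lie in $exp(I_0\cF)$, since the time-one flow of a time-dependent vector field $f_t\,\partial_z$ with $f_t(0)=0$ has derivative $\exp\bigl(\int_0^1 f_t'(0)\,dt\bigr)>0$ at the origin. So the intermediate statement your homotopy argument is designed to produce — that the full germs $\phi_0,\phi_1$ on $M$ agree modulo $exp(I_x\cF)$ — is false; only their restrictions to the transversals agree modulo $exp(I_x\cF)|_{S_x}$ (in this example the transversal is a point, which is exactly why the theorem survives). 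A correct argument must use the condition $(\bt\circ\bar b)(S_x)\subset S_y$ in an essential way and work on the transversal from the start; in \cite{AZ2} this is done by reducing via morphisms of bisubmersions to path-holonomy bisubmersions, where sections of $\bs$ over $S_x$ through $(0,x)$ have the explicit form $z\mapsto(v(z),z)$ with $v(x)=0$ and the induced maps $z\mapsto exp(\sum_i v_i(z)X_i)(z)$ can be analysed directly on $S_x$. As a secondary point, your inference from ``$Z_t$ vanishes at $u$'' to ``$Z_t\in I_y\cF$'' also needs the decomposition of sections of $\ker d\bs$ into $\bt$-projectable generators to be spelled out, since a vector field in $\cF$ vanishing at $y$ need not lie in $I_y\cF$; this is repairable, but the connectivity issue above is not.
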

For all $x,y$ the map  $\Phi^{\cF}$ is injective \cite[theorem. 2.20]{AZ2} {and assembles to a groupoid morphism \cite[theorem. 2.7]{AZ2}}.
In the case of regular foliations, the map $\Phi^{\cF}$ describes the usual geometric notion of holonomy.

\begin{rem}\label{rem:linholtrafo}
{Linearizing any representative of $\Phi^{\cF}(h)$ one associates to $h$ a well-defined linear map $T_xS_x\to T_yS_y$. Notice that $T_xS_x$ can be identified with the normal space $N_xL$ to the leaf at $x$. Hence, when $x=y$, we obtain a representation of the isotropy Lie group $H(\cF_M)_x^x$ on $N_xL$ \cite[\S 3.1]{AZ2}.}
\end{rem}

Let $(M,\CF)$ be a foliated manifold and $\pi\colon P\fto M$ a surjective submersion with connected fibers. Recall that there is  
a canonical surjective morphism $$\Pi\colon H(\pi^{-1}(\CF)) \cong\pi^{-1}(H(\CF)) \to H(\CF),$$ {where the isomorphism is given in theorem. \ref{thm:pullbackgroid}.}
We now show that the holonomy transformations associated to a point in $H(\pi^{-1}(\CF)) $ and to its image coincide. 
 
\begin{prop}\label{thm:holtrafo}

For every  $h\in H(\pi^{-1}(\CF))$, the holonomy transformation associated to $h$ and    to $\Pi(h)$ coincide, under the obvious identifications. More precisely: fix  slices  $S_{x}$ at $x:=\bs(h)\in P$ and $S_y$ at $y:=\bt(h)$, transversal to  $\pi^{-1}(\cF)$. Then
$$\Phi^{\pi^{-1}(\cF)}(h) \in \frac{GermAut_{\pi^{-1}(\cF)}(S_x, S_y)}{exp(I_x \pi^{-1}(\cF))|_{S_x}}$$
and
$$\Phi^{\cF}(\Pi(h)) \in \frac{GermAut_{\cF}(S_{\pi(x)}, S_{\pi(y)})}{exp(I_{\pi(x)} \cF)|_{S_{\pi(x)}}}$$
coincide under the diffeomorphisms $S_x\cong S_{\pi(x)}:=\pi(S_x)$ and $S_y\cong S_{\pi(y)}:=\pi(S_y)$ obtained restricting $\pi$.
\end{prop}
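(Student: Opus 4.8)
The plan is to compute both holonomy transformations from a single, explicitly related pair of bisubmersions, and then transport the result across the diffeomorphisms $\pi|_{S_x}$ and $\pi|_{S_y}$. The first observation is that $\Gamma_c(\ker d\pi)\subset \pi^{-1}(\cF)$, so every leaf of $\pi^{-1}(\cF)$ is $\pi$-saturated; consequently a slice $S_x$ transverse to $\pi^{-1}(\cF)$ is transverse to the $\pi$-fibres, $\pi$ restricts to diffeomorphisms $S_x\cong S_{\pi(x)}$ and $S_y\cong S_{\pi(y)}$, and --- exactly as in the proof of proposition \ref{prop:corrleaves}(ii) --- these diffeomorphisms identify $\iota_{S_x}^{-1}(\pi^{-1}(\cF))$ with $\iota_{S_{\pi(x)}}^{-1}(\cF)$ and $\iota_{S_y}^{-1}(\pi^{-1}(\cF))$ with $\iota_{S_{\pi(y)}}^{-1}(\cF)$. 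In particular they conjugate $GermAut_{\pi^{-1}(\cF)}(S_x,S_y)$ onto $GermAut_{\cF}(S_{\pi(x)},S_{\pi(y)})$ and $exp(I_x\,\pi^{-1}(\cF))|_{S_x}$ onto $exp(I_{\pi(x)}\cF)|_{S_{\pi(x)}}$, hence induce a bijection of the two quotient spaces in which the holonomy transformations live. It therefore suffices to exhibit representatives $\tau$ and $\tau'$ that correspond under $\pi$.

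Next I would unwind the identifications defining $\Pi$. Fixing a path holonomy atlas $\CU$ for $\cF$, the isomorphism of theorem \ref{thm:pullbackgroid} factors through the isomorphism $G(\pi^{-1}\CU)\cong\pi^{-1}(G(\CU))$ of proposition \ref{super}; hence $h$ is represented by a point $(y,u,x)\in\pi^{-1}(U)$ for some $U\in\CU$ and $u\in U$ with $\bt(u)=\pi(y)$ and $\bs(u)=\pi(x)$, while $\Pi(h)=[u]\in H(\cF)$. Following theorem \ref{globalaction}, choose a section $\bar b\colon S_{\pi(x)}\to U$ of $\bs$ through $u$, transverse to the $\bt$-fibres, with $(\bt\circ\bar b)(S_{\pi(x)})\subset S_{\pi(y)}$, so that $\tau:=\bt\circ\bar b$ represents $\Phi^{\cF}(\Pi(h))$. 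I would then lift $\bar b$ to the section
\[
\bar b'\colon S_x\to \pi^{-1}(U),\qquad s\mapsto\big((\pi|_{S_y})^{-1}(\tau(\pi(s))),\,\bar b(\pi(s)),\,s\big)
\]
of the source map $\sigma$ of $\pi^{-1}(U)$. A direct check shows that $\bar b'$ passes through $(y,u,x)$, that its target $\tau':=\tau_{\pi^{-1}(U)}\circ\bar b'$ lands in $S_y$, and that $\pi|_{S_y}\circ\tau'=\tau\circ\pi|_{S_x}$; that is, $\tau'=(\pi|_{S_y})^{-1}\circ\tau\circ\pi|_{S_x}$ corresponds to $\tau$ under the slice diffeomorphisms. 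Transversality of $\bar b'(S_x)$ to the target fibres of $\pi^{-1}(U)$ follows since this $\tau'$ is a diffeomorphism onto $S_y$, so $\bar b'$ is a genuine bisection and $\tau'$ represents a holonomy transformation for $\pi^{-1}(\cF)$.

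The remaining point is that this $\tau'$ actually computes $\Phi^{\pi^{-1}(\cF)}(h)$, even though $\pi^{-1}(U)$ belongs to the atlas $\pi^{-1}\CU$ rather than to a path holonomy atlas of $\pi^{-1}(\cF)$. Since $\Phi^{\pi^{-1}(\cF)}$ is defined at the level of the holonomy groupoid and, by the naturality of the construction in \cite{AZ2} together with corollary \ref{carry}, can be evaluated on any bisubmersion adapted to the path holonomy atlas that carries a representative of $h$ --- and $\pi^{-1}\CU$ is such an atlas by proposition \ref{prop:injmap} --- the class $\langle\tau'\rangle$ is indeed $\Phi^{\pi^{-1}(\cF)}(h)$. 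Combining the three steps, $\langle\tau'\rangle$ and $\langle\tau\rangle$ correspond under the conjugation of the first paragraph, which is exactly the claim.

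I expect the main obstacle to be the bookkeeping in the last step: making precise that the holonomy transformation is insensitive to the choice of (equivalent) atlas and representing bisubmersion, and verifying that the slice diffeomorphisms $\pi|_{S_x},\pi|_{S_y}$ genuinely descend to a well-defined bijection of the quotient spaces, i.e. that they carry $exp(I_x\,\pi^{-1}(\cF))|_{S_x}$ onto $exp(I_{\pi(x)}\cF)|_{S_{\pi(x)}}$. This rests entirely on the functoriality of the transverse foliation under $\pi$ restricted to slices, so it should reduce to the correspondence already recorded in the first paragraph; the construction of the compatible bisection $\bar b'$ and the identity $\pi|_{S_y}\circ\tau'=\tau\circ\pi|_{S_x}$ are then routine.
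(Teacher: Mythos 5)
Your argument is correct and follows essentially the same route as the paper's proof: both reduce the computation to the pullback bisubmersion $\pi^{-1}(U)$ (justifying this via the equivalence of the atlas $\pi^{-1}\CU$ with a path holonomy atlas of $\pi^{-1}(\cF)$) and then exploit the bijective correspondence, induced by the slice diffeomorphisms, between bisections of $\pi^{-1}(U)$ through $(y,u,x)$ over $S_x$ and bisections of $U$ through $u$ over $S_{\pi(x)}$. The only cosmetic difference is that you lift a bisection of $U$ to one of $\pi^{-1}(U)$, whereas the paper projects a bisection of $\pi^{-1}(U)$ down to $U$; your first paragraph also makes explicit the identification of the two quotient spaces, which the paper leaves implicit.
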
 
\begin{proof}
Let  $h\in H(\pi^{-1}(\CF))$. By theorem. \ref{globalaction}, 
${\Phi^{\pi^{-1}(\cF)}(h)}$
is obtained using  a bisubmersion $V$ in the path-holonomy atlas of $(P,\pi^{-1}(\cF))$, a point $v\in V$ with $[v]=h$, and 
a certain section through $v$. {By  proposition \ref{prop:injmap}}  
the groupoid $ H(\pi^{-1}(\CF))$ {is isomorphic to  $G(\pi^{-1}(\CU))$, which} is constructed out of the atlas $\pi^{-1}(\CU)$ where $\CU$ is a path-holonomy atlas for $(M,\cF)$. 
{This means that} there is a bisubmersion $U$ in $\CU$ and a morphism of bisubmersions $$\psi\colon V\to \pi^{-1}(U)$$ defined near $v$. We have $\psi(v)=(y,u,x)\in \pi^{-1}(U)$ for some $u\in U$. Further, applying $\psi$ to any bisection of $V$ we obtain a bisection of 
$\pi^{-1}(U)$ carrying the same diffeomorphism. Hence we can work on the latter bisubmersion  instead of on $V$.

Take any section $\bar{b} \colon S_x \to \pi^{-1}(U)$ of $\bs$ through $(y,u,x)$  {transverse to the $\bt$-fibres} such that $(\bt\circ \bar{b})(S_x)\subset S_y$.
{Due to the diffeomorphism $S_x\cong S_{\pi(x)}$}, there is a unique section $b\colon S_{\pi(x)}\to U$ through $u$ such that $$\bar{b}(p)=(*,b(\pi(p)),p)$$ for any $p\in S_x$. (Here $*$ denotes the unique point of $S_y$ that corresponds to $(\bt\circ b)(\pi(p))$ under the identification $S_y \cong  S_{\pi(y)}$.)
The diffeomorphisms $$\bt\circ \bar{b}\colon S_x \to S_y \;\;\text{      and      }\;\;
\bt\circ  {b}\colon S_{\pi(x)} \to S_{\pi(y)}$$ coincide under the natural identification between slices. The former is a representative of $\Phi^{\pi^{-1}(\cF)}(h)$, while the latter is a representative  of $\Phi^{\cF}([u])$. {We conclude noticing that $[u]=\Pi(h)$, as can seen using the proof of} proposition \ref{super}.
\end{proof}

 \subsection{Further invariants}
\label{subsec:furtherinv}

The results of {\S \ref{sec:hol2} and \S 
\ref{subsec:holtrafo}} 
allow  to find more refined invariants than those listed in \S\ref{subsec:firstinv}. 
 
 \begin{thm}\label{thm:furtherinv}
Let $(M,\cF_M)$ and $(N,\cF_N)$ be {Hausdorff} Morita equivalent singular foliations. 
Fix a Hausdorff Morita equivalence, and let $x\in M$ and $y\in N$ be points in corresponding leaves (see proposition \ref{prop:corrleaves}). Then

\begin{itemize}
\item[(i)] The isotropy Lie groups $H(\cF_M)_x$ and $H(\cF_N)_y$ are isomorphic as Lie groups. 
\item[(ii)] {If $h_x\in H(\cF_M)_x$ and $h_y \in H(\cF_M)_y$ correspond under the Lie group isomorphism in (i), then their holonomy transformations  $\Phi^{\cF_M}(h_x)$ and $\Phi^{\cF_N}(h_y)$ coincide under the natural identification between slices. 
}
\item[(iii)] The representations of corresponding isotropy Lie groups on   
normal spaces to the leaves are isomorphic.
\end{itemize}
\end{thm}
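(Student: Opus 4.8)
The plan is to deduce all three statements from the two pullback results already proved for a single leg of the equivalence, namely Proposition \ref{isotropull} (isotropy Lie groups) and Proposition \ref{thm:holtrafo} (holonomy transformations), applied in turn to $\pi_M$ and to $\pi_N$. Write $\cF_P:=\pi_M^{-1}\cF_M=\pi_N^{-1}\cF_N$ for the common pullback foliation on $P$, and let $\Pi_M\colon H(\cF_P)\cong\pi_M^{-1}(H(\cF_M))\to H(\cF_M)$ and $\Pi_N\colon H(\cF_P)\cong\pi_N^{-1}(H(\cF_N))\to H(\cF_N)$ be the canonical surjections coming from Theorem \ref{thm:pullbackgroid}. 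First I would pick a point $p\in\pi_M^{-1}(x)$ and set $y':=\pi_N(p)$; by the leaf correspondence of Proposition \ref{prop:corrleaves}, $y'$ lies in the same leaf $L_N$ as the given point $y$. I will establish the three assertions with $y'$ in place of $y$, and only at the end transfer them to $y$.

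For (i), Proposition \ref{isotropull}(ii) together with the remark following it gives Lie group isomorphisms $H(\cF_M)_x\cong(\pi_M^{-1}H(\cF_M))_p\cong H(\cF_P)_p$, and symmetrically $H(\cF_P)_p\cong H(\cF_N)_{y'}$; composing yields $H(\cF_M)_x\cong H(\cF_N)_{y'}$. Let $h\in H(\cF_P)_p$ be the element matching $h_x$, so that $\Pi_M(h)=h_x$, and set $h_{y'}:=\Pi_N(h)$. For (ii), fix a slice $S_p$ at $p$ transverse to $\cF_P$ and take the transversals $S_x:=\pi_M(S_p)$ and $S_{y'}:=\pi_N(S_p)$ downstairs. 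Applying Proposition \ref{thm:holtrafo} to $\pi_M$ shows that the single holonomy transformation $\Phi^{\cF_P}(h)$ on $S_p$ pushes to $\Phi^{\cF_M}(h_x)$ under $\pi_M|_{S_p}\colon S_p\xrightarrow{\sim}S_x$, and applying it to $\pi_N$ shows that the same $\Phi^{\cF_P}(h)$ pushes to $\Phi^{\cF_N}(h_{y'})$ under $\pi_N|_{S_p}\colon S_p\xrightarrow{\sim}S_{y'}$. Hence $\Phi^{\cF_M}(h_x)$ and $\Phi^{\cF_N}(h_{y'})$ agree under the composite diffeomorphism $S_x\cong S_p\cong S_{y'}$, which is exactly the identification of transversals furnished by the equivalence.

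Statement (iii) I would then read off from (ii) by linearization: by Remark \ref{rem:linholtrafo} the normal representation is the linearization at the base point of the holonomy transformations, so the linear isomorphism $N_xL_M\cong T_xS_x\cong T_pS_p\cong T_{y'}S_{y'}\cong N_{y'}L_N$ induced by $d\pi_M$ and $d\pi_N$ intertwines the two isotropy representations. Finally, to replace $y'$ by the arbitrary given point $y\in L_N$, I would invoke leaf-invariance: since $H(\cF_N)_{L_N}$ is a transitive Lie groupoid over the leaf (for the smooth structure recalled just before Proposition \ref{isotropull}), conjugation by any arrow from $y'$ to $y$ supplies the isomorphisms of isotropy groups and of normal representations needed for (i) and (iii), while for (ii) this arrow is absorbed into the natural identification $S_{y'}\cong S_y$ through the holonomy transformation attached to a path from $y'$ to $y$ in $L_N$.

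I expect the main obstacle to be precisely this last transfer step. A common preimage $p$ with $\pi_M(p)=x$ and $\pi_N(p)=y$ need not exist, so the equivalence only produces the companion point $y'=\pi_N(p)$; matching the concrete holonomy transformation in (ii) on the nose, rather than merely up to conjugacy, then requires care in composing the identification of slices coming from $p$ with the auxiliary holonomy from $y'$ to $y$, and in checking that this composite is the \emph{natural} one intended in the statement. Parts (i) and (iii), being assertions only up to isomorphism, are insensitive to this issue and follow routinely.
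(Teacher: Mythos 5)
Your proposal is correct and takes essentially the same route as the paper, whose proof consists of citing Proposition \ref{isotropull} for (i), Proposition \ref{thm:holtrafo} for (ii), and Remark \ref{rem:linholtrafo} for (iii). The extra care you take in transferring the statements from the companion point $y'=\pi_N(p)$ to the given point $y$ via an arrow of $H(\cF_N)_{L_N}$ addresses a detail the paper leaves implicit, and is handled correctly.
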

\begin{proof}
(i) follows from proposition \ref{isotropull}.

(ii) follows from proposition \ref{thm:holtrafo}.

(iii) is a consequence of (ii), {in view of Remark \ref{rem:linholtrafo}}.
\end{proof}

\subsection{A second look at  {Hausdorff} Morita equivalence of singular foliations}\label{sec:secondlook}

In the definition of {Hausdorff} Morita equivalence between two singular foliations  $(M,\cF_M)$ and $(N,\cF_N)$, Def. \ref{def:defMEfol}, it is required that the maps
$\pi_M\colon P\fto M$ and $\pi_N\colon P\fto N$ be \emph{surjective submersions  with connected fibers}.
It is tempting to think that {Hausdorff} Morita equivalence of  singular foliations
can be phrased weakening these three conditions, i.e. that adopting weaker conditions one obtains the same equivalence classes of singular foliations. This is not the case:

\begin{prop}\label{prop:3conditions}
We do not obtain the same equivalence classes of singular foliations if we replace any of the three conditions in Def. \ref{def:defMEfol} as follows:
\begin{itemize}
\item ``Surjective'' by ``meets every leaf of the singular foliation'',
\item ``Submersion'' by ``is transverse to the singular foliation''  {\cite[Def. 19]{AndrSk}},
\item ``{With} connected fibres`` by ``such that the preimages of leaves are connected''.
\end{itemize}
\end{prop}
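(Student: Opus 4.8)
The plan is to treat the three replacements separately and, in each case, to exhibit a pair of singular foliations that become related once the condition is relaxed but which are \emph{not} Hausdorff Morita equivalent in the sense of Def. \ref{def:defMEfol}. Non-equivalence will always be certified by one of the invariants already established: the homeomorphism type of the leaf space together with the codimension of leaves (Prop. \ref{prop:corrleaves}(i)), the transverse foliation at a leaf and the isotropy Lie \emph{algebra} (Prop. \ref{prop:corrleaves}(ii),(iii)), the isotropy Lie \emph{groups} and their normal representations (Thm. \ref{thm:furtherinv}), and---when the finer invariants happen to coincide---the Morita equivalence class of the holonomy groupoid (Thm. \ref{thm:MEfolgroids}), since a genuine Hausdorff Morita equivalence forces all of these to agree. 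The three counterexamples are engineered so that a \emph{different} one of these invariants is the one that fails, which is what makes the three conditions genuinely independent.

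For the first replacement (``surjective'' by ``meets every leaf''), I would take $M=\RR^2$ with the regular foliation by horizontal lines and $N=\RR^2\setminus\{0\}$ with the restricted foliation $\iota_N^{-1}\CF_M$. The inclusion $\pi_M\colon N\hookrightarrow M$ is an open embedding, hence a submersion with singleton (connected) fibres, and it meets every leaf of $M$; together with $\pi_N=\mathrm{Id}_N$ it exhibits $M$ and $N$ as equivalent under the weakened notion. They are not Hausdorff Morita equivalent, however, because the leaf space of $M$ is $\RR$ while that of $N$ is the line with a doubled origin (the two rays into which the leaf $\{y=0\}$ breaks are non-separated), and these spaces are not homeomorphic, so Prop. \ref{prop:corrleaves}(i) rules out an equivalence. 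The mechanism is that a non-surjective map only records $\CF_M$ on a proper open set meeting each leaf, and such a restriction can change the global leaf space. For the second replacement (``submersion'' by ``transverse to the foliation'' \cite[Def. 19]{AndrSk}), the relevant structural difference is that a transverse map need not be open, and a transverse map with connected fibres need not have connected leaf-preimages; unlike a submersion with connected fibres (for which Lemma \ref{lem:subcon} forces connected leaf-preimages), a merely transverse surjection can therefore split a single leaf into several or fail to induce a homeomorphism of leaf spaces. I would exploit exactly this, relating a foliation to one whose leaf space, or whose germ of transverse foliation at a singular leaf (in the spirit of the example following Prop. \ref{prop:corrleaves}), is genuinely different, while keeping the two maps surjective and with connected fibres; the work here is to write down such a transverse surjection explicitly and check that the pullback foliations coincide while an invariant of Prop. \ref{prop:corrleaves} does not.

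The third replacement (``with connected fibres'' by ``such that the preimages of leaves are connected'') is the most delicate, and I expect it to be the main obstacle. The point is that connected leaf-preimages are precisely the hypothesis actually used in Prop. \ref{prop:corrleaves}: with that alone the leaf spaces are still homeomorphic and the isotropy Lie \emph{algebras} are still isomorphic. The only invariant left to break is therefore the isotropy Lie \emph{group} (equivalently the holonomy representation of Thm. \ref{thm:furtherinv}), whose proof genuinely uses connected fibres, through the $\bs$-connectedness argument in Prop. \ref{prop:injmap} and Prop. \ref{isotropull}. Accordingly the counterexample must be a surjective submersion $\pi\colon P\to N$ with some \emph{disconnected} fibre but with \emph{all} leaf-preimages connected, arranged so that $\pi$ restricts to a non-trivial connected cover of the leaf carrying the holonomy and thereby reduces its isotropy group; the reduction is then detected by Thm. \ref{thm:furtherinv}(i), and if the discrete data happen to coincide it is confirmed by the failure of holonomy-groupoid Morita equivalence via Thm. \ref{thm:MEfolgroids}. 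The hard part is to satisfy the two requirements at once: changing the isotropy group forces the cover to be non-trivial on the holonomy leaf, whereas connectedness of the preimages of the \emph{neighbouring} leaves severely constrains which covers are admissible, so the construction must be chosen so that every leaf ``sees'' the covering uniformly.
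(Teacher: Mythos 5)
Your overall strategy coincides with the paper's: exhibit, for each weakened condition, a map satisfying it whose pullback foliation fails to be Hausdorff Morita equivalent to the original, detecting the failure with one of the established invariants. Your first counterexample (the inclusion $\RR^2\setminus\{0\}\hookrightarrow\RR^2$ with the horizontal foliation, detected by the non-Hausdorff leaf space via Prop.~\ref{prop:corrleaves}(i)) is complete and correct; the paper instead punctures a M\"obius band and detects the failure through the isotropy group of the middle circle dropping from $\ZZ_2$ to the trivial group, but your version is just as valid and arguably more elementary.

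The genuine gap is that for the second and third items you never produce the counterexamples, and you yourself flag the constructions as ``the work here'' and ``the hard part.'' Since the proposition is precisely an assertion that such counterexamples exist, these two items are unproved as written. For the second item you also overestimate the difficulty: the paper takes $M=\RR$ with the \emph{full} foliation and $P=\RR\sqcup(\RR\setminus\{0\})$, with $\pi$ collapsing the first copy of $\RR$ to $0\in M$ and including the second; any map is transverse to the full foliation, $\pi$ is surjective with connected fibres, yet the pullback is the full foliation on a three-component space, so the leaf spaces ($3$ points versus $1$) already distinguish them --- no delicate holonomy analysis is needed. For the third item your structural analysis is on target (all leaf-preimages connected, some fibre disconnected, a nontrivial connected cover of the holonomy-carrying leaf killing its isotropy group), but the resolution of the tension you identify is exactly the missing content. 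The paper realises it with the $3$-to-$1$ covering $q\colon M\to M'$ of M\"obius bands restricted to $P=M\setminus\{\overline{(2,0)}\}$: generic fibres have three points (disconnected), the preimage of every leaf of $M'$ --- including the middle circle, whose preimage is the punctured middle circle of $M$ --- is connected, and the isotropy group at the middle circle drops from $\ZZ_2$ to the trivial group. Without these two explicit constructions (or substitutes for them) the proof is incomplete.
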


\begin{rem}
{The first two items above are motivated by what occurs for Lie groupoids.}
The Morita equivalence of two Lie groupoids $G\rightrightarrows M$ and $H\rightrightarrows N$  can be equivalently phrased by replacing the condition 
that the maps $\pi_M\colon P\fto M$ and $\pi_N\colon P\fto N$ in Def. \ref{def:MEgroid} are   surjective submersions with the following condition: these maps are transversal to the orbits of the Lie groupoids  ($G$ and  $H$ respectively) and
meet every orbit. This fact can be found in \cite{MackenzieGrdAld} and \cite{JoaoCrainic2017}, {and  follows also from proposition \ref{prop:AllME}.}

\end{rem}

{To prove proposition \ref{prop:3conditions}} it suffices to display examples of maps $\pi\colon P\fto (M,\CF)$ in which {each} of the conditions on the left hand side is weakened and so that the holonomy groupoid $H(\pi^{-1}\CF)$ is \emph{not} Morita equivalent to $H(\CF)$. Indeed, in this case, $(P,\pi^{-1}\CF)$ and  $(M,\CF)$ can not be {Hausdorff} Morita equivalent, due to  {theorem. \ref{thm:MEfolgroids}}.

We now display the examples mentioned above, involving only regular foliations. 

\begin{ex}\label{ex:surj}({\bf ``Surjective'' is needed}) Take $M$ to be the Moebius band  $$M:= \RR\times (-1,1)/\sim$$ where $(x,y)\sim (x+3k,(-1)^k y)$ for $k\in \ZZ$. Take $$P:= M\backslash \{\overline{(2,0)}\},$$ the Moebius band without a point in the ``middle circle" (the equivalence class of $(2,0)$).

\noindent Let  $$\pi:P\hookrightarrow M$$ be the inclusion. On $M$ take the regular (rank one) foliation $\CF$ given by horizontal vector fields, then $\pi^{-1}(\CF)$ is also given by horizontal vector fields. Note that  $\pi$ is a submersion with connected fibres that meets every orbit, but it is not surjective.

The isotropy group at $\overline{(0,0)}\in M$ of the holonomy groupoid $H(\CF)$ is  
isomorphic to $\ZZ_2$. But the isotropy group at $\overline{(0,0)}\in P$ of the holonomy groupoid $H(\pi^{-1}\CF)$ is trivial (the leaf through that point is contractible). Therefore the two holonomy groupoids can not be Morita equivalent.

\begin{figure}[h]
	 \caption{The {manifold} $P$}
	\centering
	\scalebox{0.5}{\includegraphics{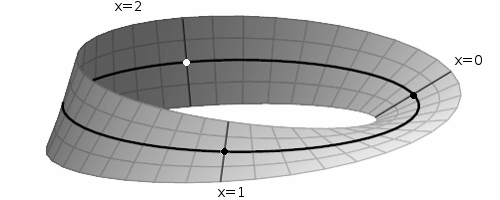}}
\end{figure}
\end{ex}

\begin{ex}({\bf ``Submersion'' is needed}) Take  $P:=\RR \sqcup \left(\RR\backslash\{0\}\right)$, $M:=\RR$, and define $\pi\colon P\fto M$ {so that it sends the copy of $\RR$ to the point $0\in M$ and $\RR\backslash\{0\}$ to $M$ by the inclusion}. On $M$ take the full foliation. The map $\pi$ is surjective, has connected fibres and it is transverse to the foliation in $M$, but it is not a submersion. 

The pullback foliation on $P$ is also the full foliation, but $P$ has three connected components. Hence the spaces of leaves are not homeomorphic and the holonomy groupoids  are not Morita equivalent. 
\end{ex}

\begin{ex}({\bf ``{With} connected fibres'' is needed}) 
This example is a variation of Ex. \ref{ex:surj}.
 Take the Moebius band $M$  as in that example. {Let $$M':= \RR\times (-1,1)/\sim'$$ where $(x,y)\sim' (x+k,(-1)^k y)$ for $k\in \ZZ$. Notice that $M'$ is a smaller Moebius band, and since the equivalence classes of $\sim$ are contained in those of $\sim'$, there is a 
 natural quotient map  $q\colon M\to M'$ which is a $3$ to $1$ covering map.} 
 
Let $P$ be $M$ with a point removed,  as in  Ex. \ref{ex:surj}. {Let} $$\pi '\colon P\fto M'$$ be  {the restriction of $q$ to $P$}. On $M'$ take the regular (rank one) foliation given by horizontal vector fields. Then $\pi '$ is a surjective submersion with connected preimages of leaves, but whose fibres are not connected (all fibres consist of three points, except for one that consists of two points). As in the first example,  the isotropy groups of the corresponding holonomy groupoids are $\ZZ_2$ at the point $\overline{(0,0)}\in M '$ and  the trivial group at $\overline{(0,0)}\in P$. Hence the  holonomy groupoids can not be Morita equivalent.
\end{ex}

 
 \section{Further developments}\label{sec:NHMESingfol}
\subsection{{An extended   equivalence for singular foliations}}

 {Our notion of  Hausdorff 
Morita equivalence (Def. \ref{def:defMEfol}) has certain drawbacks, which originate from the fact that {the}  space of arrows of a Lie groupoid is not necessarily Hausdorff:}

\begin{itemize}
\item If two non-Hausdorff Lie groupoids are Morita equivalent, then their singular foliations might  {not} be Hausdorff Morita equivalent. (Compare with proposition \ref
{prop: implications}).
\item As a consequence, we have to add a Hausdorfness assumption\footnote{Notice  that a  Hausdorfness assumption is needed also to match the Morita equivalence of Lie groupoids and Lie algebroids, see proposition \ref{prop:MEalggroids}.} in proposition \ref{prop:equivproj} on projective foliations.
\end{itemize}  
 
In an attempt to extend the notion of Hausdorff Morita equivalence so that the above drawbacks do not occur, we propose to allow the manifold $P$ in Def. \ref{def:defMEfol} to be \emph{non-Hausdorff}.

A first issue to address is the notion of singular foliation on a  non-Hausdorff manifold.  
 In remark \ref{rem:sheavesbij} we saw that on a (Hausdorff) manifold,
 Def. \ref{def:singfol}  (in terms of compactly supported vector fields) is    equivalent to the characterisation given in that remark (in terms of subsheaves). 
On a non-Hausdorff manifold   $V$, this is no longer the case. Indeed the notion obtained extending trivially Def. \ref{def:singfol} is quite restrictive, 
the main reason being that there might be points $p\in V$ where all compactly supported  vector fields vanish. However the sheaf of smooth vector fields on $V$ (a sheaf of $C^{\infty}$-modules) is well-behaved.
Hence we propose to define a  singular foliation on a possibly non-Hausdorff manifold $V$ as \emph{an involutive, locally finitely generated subsheaf of the sheaf of smooth vector fields.} 

A second issue to address is how to
 extend the notion of pullback foliation to a non-Hausdorff manifold. By  remark \ref{rem:sheavesbij}, for a Hausdorff manifold the sheaf associated to a pullback foliation is given by 
 $\widehat{\iota_U^{-1}(\pi^{-1}\CF)}=\widehat{\pi|_U^{-1}\CF}$ for every open subset $U$.
For a non-Hausdorff manifold $V$ and a submersion $\pi:V\fto M$ to a manifold, we define the pullback foliation as the following 
{subsheaf} $\cS^{\pi^{-1}\CF}$ of the sheaf of vector fields: for any open (possibly non-Hausdorff) subset $U\subset V$,
\[\cS^{\pi^{-1}\CF}(U):=\{X\in\CX(U) \st X|_H\in \widehat{\pi|_H^{-1} \CF} \text{ for all open Hausdorff subsets }H\subset U\}\]

With the above ingredients at hand we can propose the following definition.

\begin{defi}\label{def:NHMEfol}
	Two singular foliations  $(M,\cF_M)$ and $(N,\cF_N)$
	are {\bf Morita equivalent} if there exists a \emph{possibly non-Hausdorff} manifold $P$ and two \emph{surjective submersions with  connected Hausdorff fibres} $\pi_M\colon P\fto M$ and $\pi_N\colon P\fto N$  such that $\cS^{\pi_M^{-1}\CF_M}=\cS^{\pi_N^{-1}\CF_N}$ as subsheaves of $\CX_P$.
	\[\begin{tikzcd}
	& P \arrow[dl,swap, "\pi_M"] \arrow[dr, "\pi_N"]   &\\
	(M,\CF_M)& &(N,\CF_N)   
	\end{tikzcd}\]
\end{defi}

We then expect 
\begin{itemize}
\item  the following extension of proposition \ref{prop: implications}: \emph{if two ({possibly} non-Hausdorff) Lie groupoids are Morita equivalent, then their singular foliations are Morita equivalent}.
\item  to carry out the construction of the holonomy groupoid (Def. \ref{def:holgroidph}) starting from the sheaf-theoretic characterization of 
  singular foliation, even for a non-Hausdorff foliated manifold. Further we expect the following improvement of theorem. \ref{thm:MEfolgroids} to hold:  

\emph{Morita equivalent singular foliations have holonomy groupoids which are Morita equivalent as open topological groupoids.} 
\item  the following improvement of proposition \ref{prop:equivproj}:
\emph{
Two projective singular foliations are Morita equivalent if{f} their holonomy groupoids  are Morita equivalent as  Lie groupoids.}
\end{itemize} 

\appendix

\section{Appendix}
 
\subsection{Morita equivalence for open topological groupoids and Lie groupoids}\label{sec:MEGrpd}

{Definition \ref{def:MEgroid}, on Morita equivalence of Lie groupoids,   is equivalent to several other characterisations, as   was proved in \cite{MdMkGrd},   {(see also \cite{LauMatXu}).} An analogue statement holds also for open topological groupoids, upon replacing   submersions with continuous open  maps. In this appendix we recall these facts and prove some implications that are used in the main body of the paper, {the main one being corollary \ref{cor:SConnMEGrpd2}.}

{We start recalling the notion of weak equivalence, {as given in \cite[\S 1.3]{DPronk}}, and of  bitorsor.}

\begin{defi}\label{def:weakeq}
Let $G\soutar M$ and $\CG\soutar P$ be two Lie groupoids (respectively, topological groupoids). A morphism $\widehat{\pi}\colon\CG\fto G$ is a {\bf weak equivalence} if:
\begin{enumerate}
\item[(i)] $\CG\fto \pi^{-1} G; \;\;\gamma\mapsto (\bt(\gamma),\widehat{\pi}(\gamma),\bs(\gamma))$ is an isomorphism,
\item[(ii)] $\bt\circ\text{Pr}_1\colon G {}_{\bs}\!\times_{\pi} P\fto M$ is a surjective submersion (resp. a surjective continuous and open map).
\end{enumerate}
{Here $\pi\colon P\to M$ denotes the base map covered by $ \widehat{\pi}$.}
\end{defi}

\begin{rem}\label{rem:weakeq}
\begin{itemize}
\item[i)] Looking at a groupoid as a small category, a weak equivalence is the same thing as a {fully}  faithful and essentially surjective functor.
\item[ii)] When a map $\pi\colon P\fto M$ is completely transverse (transverse to the orbits and meeting every orbit) to a Lie groupoid $G\soutar M$, then the projection $\pi^{-1} G \fto G$ is a weak equivalence.
\item[iii)] {If $G$ is an open topological groupoid and ${\pi}$ is a continuous, open and surjective map, then condition (ii) in definition \ref{def:weakeq} is automatically satisfied, as can be showed using}
 lemma \ref{lem:openmapABC}.
 \end{itemize}
\end{rem}

\begin{defi}
Let $G\soutar M$ be a Lie groupoid (respectively, a topological open groupoid) and $\pi\colon P\fto M$ a surjective submersion (resp. a surjective continuous and open map). A {\bf $G$-action} over a \emph{not necessarily Hausdorff manifold} $P$ is a smooth (resp. continuous) map $\star \colon G {}_\bs\!\times_\pi P \fto P$ such that for all $g,h\in G$ and $p\in P$:
$$ \pi(g\star p)=\bt(g),\;\;\;\;\;\;\;\;
 g\star (h \star p)=(gh)\star p,\;\;\;\;\;\;\;\;
 e_{\pi(p)}\star p=p.$$
Such a manifold $P$ with a $G$-action is called a {\bf $G$-module} and $\pi$ is called its {moment map}. 
If the $G$-action is free and proper 
{then} 
$P/G$ is a manifold {and} we say that $P$ is a {\bf $G$-principal bundle}.

A {\bf $(G,H)$-bimodule} for the Lie groupoids $G\soutar M$ and $H\soutar N$ is a (not necessarily Hausdorff) manifold $P$ with two actions commuting with each other. 
A $(G,H)$-bimodule $P$ that is principal with respect to both actions and such that $G\backslash P\cong N$ and $P/H\cong M$  is called a {\bf $(G,H)$-bitorsor}.
\end{defi}
The following statement can be found in \cite[\S 2.5]{MdMkGrd}
\begin{lem}\label{lem:bitorsor}
Consider a {Lie} groupoid $\Gamma\soutar K$, a $\Gamma$-principal bundle $S$, a $\Gamma$-module $Q$ and a map $f\colon Q\fto S$ preserving the $\Gamma$ actions. Then $Q/\Gamma$ is a manifold.
\end{lem}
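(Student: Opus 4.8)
The plan is to deduce that $Q/\Gamma$ is a manifold from the standard fact (the groupoid analogue of the quotient theorem for free and proper group actions, as in \cite{MdMkGrd}) that the orbit space of a \emph{free and proper} Lie groupoid action is a manifold, with the quotient map a submersion. Thus the whole task reduces to showing that the $\Gamma$-action on $Q$ is free and proper. Throughout I use that $f$ is a morphism of $\Gamma$-modules, meaning it intertwines the moment maps, $\pi_S\circ f=\pi_Q$ (where $\pi_Q\colon Q\to K$ and $\pi_S\colon S\to K$), and the actions, $f(g\star q)=g\star f(q)$; these are exactly the two properties that transport the good behaviour of the principal action on $S$ to the action on $Q$.

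First I would check freeness, which is immediate. Suppose $g\star q=q$ for some $q\in Q$ and $g\in\Gamma$ with $\bs(g)=\pi_Q(q)$. Applying $f$ and using equivariance gives $g\star f(q)=f(g\star q)=f(q)$; since $S$ is a $\Gamma$-principal bundle its action is free, so $g$ must be a unit. Hence the $\Gamma$-action on $Q$ has trivial stabilisers. Next comes properness, which I expect to be the main obstacle. Writing the action maps $a_Q\colon \Gamma{}_\bs\!\times_{\pi_Q}Q\to Q\times Q,\ (g,q)\mapsto(g\star q,q)$ and likewise $a_S$ for $S$, equivariance of $f$ yields the commuting identity
\[
(f\times f)\circ a_Q = a_S\circ\widetilde f,\qquad \widetilde f(g,q):=(g,f(q)).
\]
Since $S$ is principal, $a_S$ is proper: indeed it is a diffeomorphism onto the closed submanifold $S{}_{q_S}\!\times_{q_S}S\subset S\times S$, where $q_S\colon S\to S/\Gamma$ is the quotient map (recall $S/\Gamma$ is Hausdorff, so the fibred product over it is closed). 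Given a compact $L\subset Q\times Q$, the identity above shows $\widetilde f\big(a_Q^{-1}(L)\big)\subseteq a_S^{-1}\big((f\times f)(L)\big)=:D$, which is compact; hence the $\Gamma$-coordinate of any point of $a_Q^{-1}(L)$ lies in the compact set $\text{Pr}_\Gamma(D)$, while its $Q$-coordinate lies in $\text{Pr}_2(L)$. This confines $a_Q^{-1}(L)$ to a compact region of $\Gamma{}_\bs\!\times_{\pi_Q}Q$, from which properness of $a_Q$ follows.

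The delicate point is to make the last compactness estimate rigorous without assuming $Q$ (and $\Gamma$) Hausdorff, since a compact subset of a non-Hausdorff space need not be closed. Here I would exploit principality of $S$ more explicitly: it provides a continuous division map $\delta\colon S{}_{q_S}\!\times_{q_S}S\to\Gamma$ characterised by $\delta(g\star s,s)=g$. For $(q',q)$ in the orbit relation $R:=\mathrm{im}(a_Q)$ we have $f(q')=g\star f(q)$ with $g$ the unique arrow taking $q$ to $q'$, so $g=\delta(f(q'),f(q))$; this exhibits $a_Q^{-1}$ on $R$ as the continuous map $(q',q)\mapsto\big(\delta(f(q'),f(q)),q\big)$. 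Consequently $a_Q$ is a homeomorphism onto $R$, and $R=\{(q',q): (f(q'),f(q))\in S{}_{q_S}\!\times_{q_S}S,\ \delta(f(q'),f(q))\star q=q'\}$ is cut out cleanly by continuous conditions, which controls the topology and upgrades the estimate above to genuine properness. Having established that the action is free and proper, I would then invoke the quotient theorem to conclude that $Q/\Gamma$ is a manifold.
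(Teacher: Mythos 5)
Your freeness argument is fine, but note first that the paper offers no proof of this lemma at all: it is quoted verbatim from \cite[\S 2.5]{MdMkGrd}, so the comparison below is with the standard argument of that reference. The genuine gap is in the reduction to properness, and it cannot be repaired. The lemma is applied in the proof of proposition \ref{prop:AllME} with $S=Q_G$ and $Q=Q_G\times_K Q_H$, where $Q$, $S$ and $\Gamma$ may all be non-Hausdorff, and the resulting quotient $P=Q/\Gamma$ is \emph{explicitly allowed to be a non-Hausdorff manifold}. Since the quotient map $Q\to Q/\Gamma$ is open (the action map $\Gamma\,{}_{\bs}\!\times_{\pi_Q}Q\to Q$ is a submersion), $Q/\Gamma$ is Hausdorff exactly when the orbit relation $R=\mathrm{im}(a_Q)$ is closed in $Q\times Q$. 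Hence $R$ is not closed in general, and the $\Gamma$-action on $Q$ is not proper in any sense strong enough to feed into the free-and-proper quotient theorem, whose conclusion would force $Q/\Gamma$ to be Hausdorff. Your own patch makes this visible: the division map shows that $a_Q$ is a homeomorphism onto $R$, but $R$ is cut out by the coincidence $\delta(f(q'),f(q))\star q=q'$ of two continuous maps valued in the possibly non-Hausdorff space $Q$, which is not a closed condition; the claim that this ``upgrades the estimate to genuine properness'' is unsubstantiated and false precisely in the cases the lemma is needed for. (When $Q$ is Hausdorff your argument does go through, but that case is insufficient for the paper.)

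The correct route uses the principal bundle $S\to B:=S/\Gamma$ to build charts rather than to transfer properness. Pick local sections $\sigma\colon V\to S$ of the surjective submersion $q_S\colon S\to B$. Freeness on $S$ together with your division map $\delta$ shows that each $\Gamma$-orbit in $(q_S\circ f)^{-1}(V)\subset Q$ meets the slice $f^{-1}(\sigma(V))$ in exactly one point, namely $\delta(\sigma(v),f(q))\star q$ for $v=q_S(f(q))$; equivariance of $f$ forces $df$ to cover the orbit directions in $S$, which are complementary to $T\sigma(V)$, so $f$ is transverse to $\sigma(V)$ and $f^{-1}(\sigma(V))$ is a submanifold of $Q$. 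These slices are the charts of $Q/\Gamma$, with transition maps of the form $q\mapsto\delta\bigl(\sigma'(\cdot),\sigma(\cdot)\bigr)\star q$, hence smooth; no properness is used and nothing forces the quotient to be Hausdorff. In short, the division map you introduce at the end is exactly the right tool, but it should be used to manufacture local slices, not to rescue a properness statement that fails.
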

 
\begin{prop}\label{prop:AllME} Let $G\soutar M$ and $H\soutar N$ be Lie groupoids. The following statements are equivalent: 
\begin{enumerate}
\item[(i)] There exists a Lie groupoid $\Gamma$ and two weak equivalences $\CG\fto G$ and $\CG\fto H$.
\item[(ii)] There exists a $(G,H)$-bitorsor $P$.
\item[(iii)] $G$ and $H$ are Morita equivalent ({Def. \ref{def:MEgroid}}).
\end{enumerate}
\end{prop}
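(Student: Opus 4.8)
The plan is to show the three conditions are equivalent by running around the cycle $(iii)\Rightarrow(i)\Rightarrow(ii)\Rightarrow(iii)$, so that the only genuinely new construction---manufacturing a bitorsor out of a span of weak equivalences---is carried out exactly once, while the remaining two implications stay essentially formal.

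For $(iii)\Rightarrow(i)$ I would simply set $\Gamma:=\pi_M^{-1}G$. Since $\pi_M$ is a surjective submersion it is in particular completely transverse to $G$, so by Remark \ref{rem:weakeq}(ii) the projection $\pi_M^{-1}G\fto G$ is a weak equivalence. The hypothesis $\pi_M^{-1}G\cong\pi_N^{-1}H$ lets me regard the very same $\Gamma$ as $\pi_N^{-1}H$, and the projection $\pi_N^{-1}H\fto H$ is a weak equivalence for the same reason; this produces the required span.

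The implication $(i)\Rightarrow(ii)$ is the technical heart, and the one for which Lemma \ref{lem:bitorsor} has been prepared. Starting from weak equivalences $\widehat{\pi}_M\colon\Gamma\fto G$ and $\widehat{\pi}_N\colon\Gamma\fto H$ over base maps $\pi_M\colon E\fto M$ and $\pi_N\colon E\fto N$ (with $E$ the object manifold of $\Gamma$), I would build the candidate bitorsor as the quotient by a diagonal $\Gamma$-action of
$$Q:=G\,{}_{\bs}\!\times_{\pi_M}E\,{}_{\pi_N}\!\times_{\bt}H,$$
where $\Gamma$ moves the $E$-factor along its arrows and simultaneously translates the $G$- and $H$-factors through $\widehat{\pi}_M$ and $\widehat{\pi}_N$; the residual left $G$-multiplication on the first factor and right $H$-multiplication on the last factor descend to commuting actions on $P:=Q/\Gamma$ with the evident moment maps. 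The main point is to check that $P$ is a (possibly non-Hausdorff) manifold on which both actions are free and proper: here I would apply Lemma \ref{lem:bitorsor} to the projection $f\colon Q\fto G\,{}_{\bs}\!\times_{\pi_M}E$ onto the $\Gamma$-principal bundle coming from the weak-equivalence isomorphism $\Gamma\cong\pi_M^{-1}G$. The identifications $G\backslash P\cong N$ and $P/H\cong M$ then follow by combining the two isomorphisms $\Gamma\cong\pi_M^{-1}G$ and $\Gamma\cong\pi_N^{-1}H$ with essential surjectivity (condition (ii) of Def. \ref{def:weakeq}), which is exactly what guarantees that the two moment maps are surjective.

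Finally, for $(ii)\Rightarrow(iii)$ I would let $\mu_G\colon P\fto M$ and $\mu_H\colon P\fto N$ be the two moment maps; being the quotient projections of the principal $H$- and $G$-actions respectively, they are surjective submersions. The isomorphism $\mu_G^{-1}G\cong\mu_H^{-1}H$ sends $(p_1,g,p_0)$ to $(p_1,h,p_0)$, where $h\in H$ is the unique arrow with $(g\star p_0)\star h=p_1$: such an $h$ exists because $g\star p_0$ and $p_1$ lie in the same $\mu_G$-fibre and hence in the same $H$-orbit, it is unique by freeness of the $H$-action, and it depends smoothly on the data through the division map of the principal $H$-bundle $P$. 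Checking that this assignment intertwines source, target and multiplication is routine. I expect the bitorsor construction in $(i)\Rightarrow(ii)$---specifically verifying properness and the two quotient identifications---to be the only delicate step; the other implications reduce to Remark \ref{rem:weakeq} and Lemma \ref{lem:bitorsor}. The stated open-topological analogue would follow by the same three steps, replacing submersions by continuous open maps and using Remark \ref{rem:weakeq}(iii) in place of \ref{rem:weakeq}(ii).
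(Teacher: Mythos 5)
Your overall route is the same as the paper's: the same cycle of implications, the same construction of the bitorsor as $P=\bigl(G\,{}_{\bs}\!\times_{\pi_M}K\,{}_{\pi_N}\!\times_{\bt}H\bigr)/\Gamma$ with Lemma \ref{lem:bitorsor} applied to the projection onto $Q_G=G\,{}_{\bs}\!\times_{\pi_M}K$, and the same use of Remark \ref{rem:weakeq} for $(iii)\Rightarrow(i)$. Your $(ii)\Rightarrow(iii)$ is phrased as a direct isomorphism $\mu_G^{-1}G\cong\mu_H^{-1}H$ via the division map of the principal $H$-bundle, whereas the paper routes both pullbacks through the double action groupoid $G\,{}_\bs\!\times_{\pi_M}P\,{}_{\pi_N}\!\times_{\bt}H$ over $P$; these are the same isomorphism written differently, so this is only a presentational difference.

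There is, however, one genuine gap in your $(ii)\Rightarrow(iii)$. The bitorsor $P$ is, by the paper's definition (and as you yourself note when constructing it in $(i)\Rightarrow(ii)$), only a \emph{possibly non-Hausdorff} manifold, while Definition \ref{def:MEgroid} requires the equivalence to be implemented by a \emph{Hausdorff} manifold. So exhibiting $\mu_G^{-1}G\cong\mu_H^{-1}H$ over $P$ itself does not yet establish $(iii)$. The paper closes this by choosing a cover of $P$ by Hausdorff open sets $\{U_i\}$, setting $\tilde P:=\sqcup_i U_i$ with the canonical local diffeomorphism (hence surjective submersion) $\rho\colon\tilde P\to P$, and checking that $(\tilde P,\mu_G\circ\rho,\mu_H\circ\rho)$ is a Morita equivalence; you should add this step. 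The rest of your argument, including the deferred verifications of freeness, properness and the quotient identifications for $P$, matches what the paper does (and leaves at the same level of detail).
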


The proof of this statement can be found in \cite{MdMkGrd} and \cite[prop. 2.4]{LauMatXu}, nevertheless we review its proof here.  

\begin{proof}
$(i)\Rightarrow (ii)$: Consider a Lie groupoid $\CG\soutar K$ with weak equivalences $\widehat{\pi}_M\colon\CG\fto G$ and $\widehat{\pi}_N\colon\CG\fto H$. Therefore $\Gamma=\pi_M^{-1} G\cong \pi_N^{-1} H$. We get that $Q_G:=G {}_{\bs}\!\times_{\pi_M} K$ is a $(G,\Gamma)$-bitorsor. Using a similar argument we get that $Q_H$ a $(\Gamma,H)$-bitorsor.   The (not necessarily Hausdorff) manifold $Q:=(Q_G\times_K Q_H)$ 
{has a diagonal $\Gamma$-action with the canonical map to  $K$ {as moment map}. Applying lemma \ref{lem:bitorsor} to the map $Q\fto Q_G$ we see that
 $$P:=(Q_G\times_K Q_H)/\Gamma$$ is a (not necessarily Hausdorff) manifold.} One can  check that it is a $(G,H)$-bitorsor. 

$(ii)\Rightarrow (iii)$:  Consider a $(G,H)$-bitorsor $P$, with moment maps $\pi_M\colon P\fto M$,$\pi_N\colon P\fto N$. 
{Then $$\Gamma= G {}_\bs\!\times_{\pi_M} P {}_{\pi_N}\!\times_\bt H$$  has a} natural structure of Lie groupoid over $P$ with $\bt(g,p,h)=gph$, $\bs(g,p,h)=p$ and multiplication given canonically by $G$ and $H$. Then the maps $\Gamma\fto \pi_M^{-1}G; \;(g,p,h)\mapsto (p,g^{-1},gph)$ and $\Gamma\fto \pi_N^{-1} H;\;(g,p,h)\mapsto (p,h,gph)$ are isomorphisms of Lie groupoids.

 This shows that $\pi_M^{-1}G\cong \pi_N^{-1} H$ as Lie groupoids over {the} not necessarily Hausdorff manifold $P$. Now take  a Hausdorff cover $\{U_i\}_{i\in I}$ of $P$ and let $\tilde{P}:=\sqcup_i U_i$. There is a canonical submersion $\pi\colon \tilde{P}\fto P$. It is easy tho see that $(\tilde{P},\pi_M\circ \pi, \pi_N\circ \pi)$ is a Morita equivalence. 
 
$(iii)\Rightarrow (i)$: Given  a Morita equivalence $(P,\pi_M,\pi_N)$ between $G$ and $H$, call $\Gamma:=\pi_M^{-1}G\cong \pi_N^{-1}H$. The {natural projections $\Gamma\fto G$ and $\Gamma\fto H$ are weak equivalences}.
\end{proof}

 \begin{rem}\label{rem:topopenME}
{proposition \ref{prop:AllME} also holds for open topological groupoids,
as can be proven using lemma \ref{lem:openmapABC}}. {For arbitrary topological groupoids, this is not the case.}
\end{rem}

\begin{cor}\label{cor:SConnMEGrpd2}
{Let $k\ge0$.} If $G\soutar M$ and $H\soutar N$ are source $k$-connected Morita equivalent {Hausdorff} Lie groupoids, then {there exists a Hausdorff $(G,H)$-bitorsor $P$}.
Moreover this bitorsor is a Morita equivalence with $k$-connected fibres ({in the sense of Def. \ref{def:MEgroid})}.
\end{cor}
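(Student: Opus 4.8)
The plan is to build the bitorsor via the construction already used to prove Proposition~\ref{prop:AllME}, and then to extract the two extra features---Hausdorffness of the total space and $k$-connectedness of the fibres---from the source $k$-connectedness of $G$ and $H$. Since $G$ and $H$ are Morita equivalent, Definition~\ref{def:MEgroid} furnishes a Hausdorff manifold $P_0$ together with surjective submersions $\pi_M\colon P_0\fto M$ and $\pi_N\colon P_0\fto N$ and an isomorphism $\Gamma:=\pi_M^{-1}G\cong\pi_N^{-1}H$ of Lie groupoids over $P_0$. First I would note that, $G$, $H$ and $P_0$ all being Hausdorff, the pullback groupoid $\Gamma$ is Hausdorff, and consequently so are the auxiliary spaces $Q_G:=G\,{}_\bs\!\times_{\pi_M}P_0$, $Q_H:=H\,{}_\bs\!\times_{\pi_N}P_0$ and $Q:=Q_G\times_{P_0}Q_H$ from the proof of the implication $(i)\Rightarrow(ii)$, since they are fibre products of Hausdorff manifolds.

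The step I expect to be the real obstacle is to improve the conclusion of Lemma~\ref{lem:bitorsor} from ``$P:=Q/\Gamma$ is a manifold'' to ``$P$ is a \emph{Hausdorff} manifold''. For this I would verify that the diagonal $\Gamma$-action on $Q$ is not only free but also \emph{proper}: as $Q_G$ is a $(G,\Gamma)$-bitorsor, the $\Gamma$-action on $Q_G$ is principal, hence free and proper, and properness is inherited by $Q$ through the $\Gamma$-equivariant projection $Q\fto Q_G$ (properness on one factor passes to the diagonal action on the fibre product). A free and proper action on a Hausdorff manifold has a Hausdorff quotient, so $P=Q/\Gamma$ is a Hausdorff $(G,H)$-bitorsor, which establishes the first assertion.

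It then remains to recognise $P$ as a Morita equivalence with $k$-connected fibres. Writing $\pi_M\colon P\fto M$ and $\pi_N\colon P\fto N$ for the two moment maps, these are surjective submersions (they are the projections of the two principal bundles), and $\pi_M^{-1}G\cong\pi_N^{-1}H$ by the implication $(ii)\Rightarrow(iii)$ of Proposition~\ref{prop:AllME}; hence $(P,\pi_M,\pi_N)$ is a Morita equivalence in the sense of Definition~\ref{def:MEgroid}. For the fibres, I would use that $\pi_M$ realises $P/H\cong M$, so its fibres are exactly the $H$-orbits; by freeness of the $H$-action the orbit map through any $p$ is a diffeomorphism from the source fibre $\bs^{-1}(\pi_N(p))$ of $H$ onto the fibre of $\pi_M$ through $p$, and this source fibre is $k$-connected because $H$ is source $k$-connected. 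The symmetric argument, with $G$ source $k$-connected, shows the fibres of $\pi_N$ (the $G$-orbits) are $k$-connected, completing the plan.
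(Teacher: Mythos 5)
Your proposal is correct and follows essentially the same route as the paper: construct the bitorsor $P$ via the chain $(iii)\Rightarrow(i)\Rightarrow(ii)$ of Proposition~\ref{prop:AllME}, observe that it is Hausdorff, use $(ii)\Rightarrow(iii)$ to see it is a Morita equivalence, and identify the fibres of the moment maps with source fibres of $H$ (resp.\ $G$) to get $k$-connectedness. The only difference is that you spell out why $P=Q/\Gamma$ is Hausdorff (fibre products of Hausdorff manifolds are Hausdorff, and the diagonal $\Gamma$-action is free and proper because it is so on the factor $Q_G$), a point the paper leaves to the reader.
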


\begin{proof}   Following the implications $(iii)\Rightarrow (i)\Rightarrow (ii)$ in the proof of proposition \ref{prop:AllME}, one sees that the bitorsor $P$ constructed there is Hausdorff. {Then use the implication $(ii)\Rightarrow(iii)$ to prove that $P$ is a Morita equivalence.}
	
{Note that} being $P$ a bitorsor, the fibres of $\pi_M\colon P\fto P/H{\cong M}$ are {diffeomorphic} to the source fibers of $H$, {which are $k$-connected by assumption}. A similar argument holds for the fibres of $\pi_N\colon P\fto P/G\cong N$.
\end{proof}

\begin{rem}
The Morita equivalence $P$   in corollary \ref{cor:SConnMEGrpd2} is a (global) bisubmersion for the underlying foliations, as we now show.
Using the implication ``$(ii)\Rightarrow (iii)$'' in proposition \ref{prop:AllME} we get an isomorphism of {Lie} groupoids $$\pi_M^{-1}G\cong \pi_N^{-1} H\cong G\ltimes P\rtimes H.$$ 
Denote by $\CF_M$ and $\CF_N$ the  foliations underlying $G$ and $H$. Using {lemma \ref{GtoA} i)} and lemma \ref{lem:pullbackalgfol} we get that the foliations  underlying $\pi_M^{-1}G$ and $\pi_N^{-1} H$ are $\pi_M^{-1}\CF_M$ and $\pi_N^{-1}\CF_N$ respectively.
Since $P$ is a {$(G,H)$-bitorsor,} the foliation underlying  the Lie groupoid $G\ltimes P\rtimes H$ is $\CG_c(ker(d\pi_M))+\CG_c(ker(d\pi_N))$. Hence $$\pi_M^{-1}\CF_M=\pi_N^{-1}\CF_N=\CG_c(ker(d\pi_M))+\CG_c(ker(d\pi_N)).$$
\end{rem}

\bibliographystyle{habbrv}

\end{document}